\documentclass[a4paper,10pt,reqno]{amsart}        
\usepackage{verbatim}       
\usepackage{amssymb}       
\usepackage{mathrsfs}       
\usepackage{enumerate}      

\usepackage[active]{srcltx}       
\usepackage[hidelinks]{hyperref}
\usepackage{cleveref}         
\usepackage{amsrefs}
\makeatletter
\BibSpec{misc}{%
  +{} {\PrintAuthors} {author}
  +{.} { } {title}
  +{.} { } {organization}
  +{,} { \PrintDateB } {date}
  +{.} { } {note}  
  +{,} { \PrintDOI } {doi}
  +{.} {} {transition}
}

\makeatother
\usepackage{amsthm}

\usepackage{thmtools}
\newif\ifdeveloping
\ifdeveloping

\usepackage{filemod}
\usepackage[inline]{showlabels}

\fi

\makeatletter
\AtBeginDocument{%
  \def\SL@IgnoreLabel#1{%
    \ifnum\pdfstrcmp{#1}{thmt@@}=0\else
      \ifnum\pdfstrcmp{#1}{thmt@}=\z@\else
        \SL@ShowLabel{#1}%
      \fi
    \fi
  }%
}
\makeatother

\usepackage{tikz}
\usetikzlibrary{matrix}
\usetikzlibrary{calc}
\usetikzlibrary{arrows.meta}
\numberwithin{equation}{section}       
       
\usepackage{t1enc}       
\usepackage[utf8x]{inputenc}

\newtheorem{theorem}{Theorem}[section]        
\newtheorem{lemma}[theorem]{Lemma}

\newtheorem{prop}[theorem]{Proposition}       
\newtheorem{conjecture}[theorem]{Conjecture}
       
\newtheorem{corollary}[theorem]{Corollary}

\newtheorem{homolemma}[theorem]{Homomorphism Lemma}       
\newtheorem{hypothesis}[theorem]{Hypothesis}

\theoremstyle{definition}       
\newtheorem{definition}[theorem]{Definition}       
\theoremstyle{remark}       
\newtheorem{remark}{Remark}

\newcommand{\setm}{\setminus}       
\newcommand{\subs}{\subset}       
       
\newcommand{\supp}[1]{\operatorname{supp}(#1)}
\newcommand{\og}{\operatorname{og}}       

\def\<{\left\langle}       
\def\>{\right\rangle}       
\newcommand{\stack}[1]{\operatorname{stack}(#1)}
\newcommand{\clear}[1]{\operatorname{{clear}}(#1)}

\newcommand{\conf}[2]{\operatorname{Conf}_{#2}(#1)}

\newcommand{\pmove}[2]{#1\Rightarrow #2}
\newcommand{\pmapsto}[2]{#1\mapsto #2}
\newcommand{\Gmapsto}[3]{#3:\pmapsto{#1}{#2}}
\newcommand{\poper}[3]{#3{[\pmove{#1}{#2}]}}
\newcommand{\norm}[1]{\|#1\|}
\newcommand{\Pebl}[2]{\operatorname{Pebl}_{#2}{(#1)}}

\newcommand{\imb}{\operatorname{imb}}

\newcommand{\econf}[2]{c^{#1}_{#2}}

\author[T. Csern{\'a}k]{Tam{\'a}s Csern{\'a}k }
\address
      {Pannon University, Hungary  }
\email{tamas@csernak.com}

\author[L. Soukup]{Lajos Soukup}       
\address
      {HUN-REN  R{\'e}nyi Institute of Mathematics,  Budapest, Hungary  }
\email{soukup@renyi.hu}

\title[Stacking and clearing in graph pebbling]{Stacking and Clearing in Graph Pebbling}       
\date{\today}       
       
\subjclass[2020]{Primary 05C57; Secondary 05C05, 05C38.}

\keywords{graph pebbling, stacking number, clearing number, cover pebbling, paths, cycles, trees}

\begin{document}       
       
\ifdeveloping
\fbox{Filename: \jobname.tex.
{Last modification of the tex file: \filemodprintdate{\jobname.tex}}}

\bigskip
\bigskip

\fi

\begin{abstract}
Suppose that pebbles are distributed on the vertices of a graph $G$. A pebbling
step along an edge $uv$ removes two pebbles from $u$ and places one pebble on
$v$. We introduce two new graph parameters: the stacking number $\stack{G}$,
which is the least integer $t$ such that every configuration with $t$ pebbles
can be transformed by a finite sequence of pebbling steps into a configuration
with all pebbles on a single vertex, and the clearing
number $\clear{G}$, defined analogously by requiring that from every
configuration with $t$ pebbles, all but one pebble can be removed.

We prove that $\stack{G}$ is defined exactly for connected graphs, and
that $\clear{G}$ is defined exactly for connected non-bipartite graphs. We also establish general upper bounds for these
parameters; in particular, we prove that $\stack{G},\clear{G}\le 2\cdot|V(G)|\cdot 2^{\operatorname{diam}(G)}$, where $\operatorname{diam}(G)$ denotes the diameter of $G$. 

Among
our exact results are the equalities $\stack{K_n}=\clear{K_n}=n+1$,
$\stack{K_{m,n}}=3\max\{m,n\}+1$, and $\stack{P_n}=2^n-1$. We also establish
general lower bounds in terms of the independence number and odd closed walks.

For cycles, the situation is more delicate. We prove the lower bounds
\begin{equation*}
\stack{C_{2n}}\ge 2^{n+1}-1,
\qquad
\clear{C_{2n+1}}\ge 3\cdot 2^n-2,
\end{equation*}
and we formulate the Almost Stacked Hypothesis, motivated by Sj\"ostrand's cover
pebbling theorem. Assuming this hypothesis, we obtain
\begin{equation*}
\stack{C_{2n}}=2^{n+1}-1,
\qquad
\clear{C_{2n+1}}=3\cdot 2^n-2.
\end{equation*}
At present, we do not have a conjecture for the exact value of
$\stack{C_{2n+1}}$.
Finally, computational evidence leads us to a conjectural closed formula for the
stacking number of a tree in terms of the distances and degrees of the vertices
relative to a chosen root.
\end{abstract}


\maketitle

\section{Introduction}\label{sc:intro}

Let $G$ be a graph whose vertices can carry pebbles. 
 A pebbling
step along an edge $uv$ removes two pebbles from $u$ and places one pebble on
$v$. We introduce two new graph parameters: the stacking number $\stack{G}$,
which is the least integer $t$ such that every configuration with $t$ pebbles
can be transformed by a finite sequence of pebbling steps into a configuration
with all pebbles on a single vertex, and the clearing
number $\clear{G}$, defined analogously by requiring that from every
configuration with $t$ pebbles, all but one pebble can be removed.

Our main goal is to determine when these
parameters are defined, to
derive general bounds for them, and to compute them exactly for several natural
families of graphs.

\subsection*{History}

The pebbling number $\pi(G)$, introduced in~\cite{Cu89}, is the least integer
$m$ such that from every configuration of $m$ pebbles on $G$ one can move a
pebble to any prescribed vertex. Graph pebbling has since developed in several
directions, including the cover pebbling number~\cites{CrCuFeHuPuSzTu05,Sj05},
the fractional pebbling number~\cite{Hu13}, and the target pebbling
number~\cite{CrCuFeHuPuSzTu05}.

In~\cite{Hu13}, Hurlbert introduced a general framework in which one fixes a
family $\mathcal D$ of target configurations and asks for the least integer
$m$ such that every configuration of size $m$ can be transformed into some
member of $\mathcal D$. From this point of view, the parameters studied here
also prescribe a class of admissible final configurations. The difference is
that in our setting the final location is not specified: for stacking, the
reachable configuration is required only to be supported on a single vertex,
and for clearing it is required only to consist of a single pebble. To the best
of our knowledge, these parameters have not been investigated before.

Another important contribution is Sj\"ostrand's cover pebbling theorem~\cite{Sj05},
which may be formulated in terms of demand functions $w\colon V(G)\to\omega$.
It implies that, for positive demands, extremal obstructions may be taken to be
stacked configurations. This result motivates the Almost Stacked Hypothesis that
we introduce later for our setting, where an \emph{almost stacked}
configuration is one in which all vertices except possibly one carry at most one
pebble.

The introduction of these new versions of pebbling was motivated by Hurlbert's seminar 
talk~\cite{Hurlbert2023}. We subsequently carried out our own computational experiments, and these helped guide the present work.

\subsection*{Notations}

A \emph{configuration} on $G$ is a function $c\colon V(G)\to\mathbb{N}$, where
$c(v)$ denotes the number of pebbles at $v$. Its \emph{size} is
\begin{equation*}
\norm{c}=\sum_{v\in V(G)}c(v),
\end{equation*}
and its \emph{support} is
\begin{equation*}
\supp{c}=\{v\in V(G):c(v)>0\}.
\end{equation*}
We write $\alpha(G)$ for the independence number of a graph $G$. For a
configuration $c$ on $G$, let
\begin{equation*}
\alpha(c)=\alpha(G[\supp{c}]),
\end{equation*}
that is, the independence number of the subgraph induced by the support of $c$.
Let $\conf{G}{}$ denote the set of all configurations on $G$, and let
\begin{equation*}
\conf{G}{t}=\{c\in\conf{G}{}:\norm{c}=t\}.
\end{equation*}
For configurations $c,d\in\conf{G}{}$, define $c\oplus d$ by
\begin{equation*}
(c\oplus d)(v)=c(v)+d(v)\qquad \text{for all } v\in V(G).
\end{equation*}

For a nonnegative integer $m$, the configurations $m\cdot c$ and $c\ominus d$
are defined in the natural way.

For $U\subseteq V(G)$, let $c\restriction U$ denote the restriction of $c$
to $U$.

For $v\in V(G)$, let $e_v=e_v^G$ denote the \emph{unit configuration} at $v$,
that is, the configuration defined by
\begin{equation*}
e_v(v)=1
\qquad\text{and}\qquad
e_v(w)=0 \ \text{for } w\in V(G)\setminus\{v\}.
\end{equation*}

If $c\in\conf{G}{}$ and $uv\in E(G)$ with $c(u)\ge2$, then the pebbling step
$\pmove{u}{v}$ is \emph{applicable} to $c$. Its result is the configuration
$c'=\poper{u}{v}{c}$ defined by
\begin{equation}\label{eq:p-move}\notag
c'(u)=c(u)-2,\qquad
c'(v)=c(v)+1,\qquad
c'(w)=c(w)\ \text{for } w\notin\{u,v\}.
\end{equation}
For configurations $c,c'\in\conf{G}{}$ we write
\begin{equation*}
\pmapsto{c}{c'}
\end{equation*}
to mean that  $c'$ is reachable from $c$ by a sequence of pebbling steps. When the graph
needs to be indicated explicitly, we write $\Gmapsto{c}{c'}{G}$. We also define
\begin{equation*}
\Pebl{c}{G}=\{c'\in\conf{G}{}:\pmapsto{c}{c'}\}.
\end{equation*}

A configuration is \emph{stacked} if its support has size $1$, and it is
\emph{stacked at $v$} if $\supp{c}=\{v\}$. A configuration is \emph{cleared}
if its size is $1$. A configuration $c$ is \emph{stackable} if
some stacked configuration lies in $\Pebl{c}{G}$, and \emph{clearable} if some
cleared configuration lies in $\Pebl{c}{G}$.

The \emph{stacking number} $\stack{G}$ is the least integer $t\ge2$ such that
every configuration in $\conf{G}{t}$ is stackable. The \emph{clearing number}
$\clear{G}$ is defined analogously by requiring every configuration in
$\conf{G}{t}$ to be clearable.

\subsection*{Main results}

The paper begins with the structural characterization given in
Theorem~\ref{tm:stack-clear-def}. In particular, the stacking number
$\stack{G}$ is defined if and only if $G$ is connected, while the clearing
number $\clear{G}$ is defined if and only if $G$ is connected and
non-bipartite. The key technical input is Theorem~\ref{tm:two-pebble-clearing}, showing
that on every connected graph, a configuration with at least two pebbles on
each vertex is stackable, and on every connected non-bipartite graph such a
configuration is even clearable. Combining this with Sj\"ostrand's theorem gives, by
Theorem~\ref{tm:stack-clear-def}, the upper bounds
\begin{equation*}
\stack{G},\ \clear{G}\le 2\cdot |V(G)|\cdot 2^{\operatorname{diam}(G)}
\end{equation*}
where $\operatorname{diam}(G)$ denotes the diameter of $G$.

We next prove several general lower bounds. First, we relate the stacking
number to the classical pebbling number by proving in
Theorem~\ref{TH_Pebblingandpebblestacking} that
\begin{equation*}
\stack{G}\ge \pi(G)+1
\end{equation*}
for
every finite connected graph $G$. As an immediate consequence, we obtain the
diameter bound $\stack{G}\ge 2^{\operatorname{diam}(G)}+1$. We also derive in Theorem~\ref{tm:alpha-2} a lower bound in terms
of the independence number, namely
\begin{equation*}
\stack{G}\ge 3\cdot \alpha(G)+1
\end{equation*}
for all
connected graphs $G\neq K_2$. For the clearing number, we prove a lower bound
for connected non-bipartite graphs in terms of odd-girth-type parameters,
where $\operatorname{odd}(G)$ denotes the maximum, over all vertices $v$, of
the length of a shortest odd closed walk from $v$ to itself, and $\og(G)$
denotes the odd girth of $G$. In
Theorem~\ref{tm:pebblingodd} we prove
\begin{equation*}
\clear{G}\ge 3\cdot 2^{\frac{\operatorname{odd}(G)-1}{2}}-2,
\end{equation*}
and hence also
\begin{equation*}
\clear{G}\ge 3\cdot 2^{\frac{\og(G)-1}{2}}-2.
\end{equation*}

We then turn to exact computations. For complete graphs we show in
Theorem~\ref{tm:complete-graph} that
\begin{equation*}
\stack{K_n}=\clear{K_n}=n+1.
\end{equation*}
For complete bipartite graphs with at least three vertices we prove in
Theorem~\ref{tm:complete-bipartite} that
\begin{equation*}
\stack{G}=3\cdot \alpha(G)+1,
\end{equation*}
and for complete $k$-partite graphs with $k\ge 3$ we show in
Theorem~\ref{tm:multi} that
\begin{equation*}
\clear{G}=\stack{G}=\max\bigl(|V(G)|+1,\;3\cdot \alpha(G)+1\bigr).
\end{equation*}
For paths we determine the exact stacking number and prove in
Theorem~\ref{tm:pn} that
\begin{equation*}
\stack{P_n}=2^n-1
\end{equation*}
for every $n\ge2$.

For cycles, the complete picture seems to be more delicate. We prove the lower
bounds
\begin{equation*}
\stack{C_{2n}}\ge 2^{n+1}-1,
\qquad
\clear{C_{2n+1}}\ge 3\cdot 2^n-2.
\end{equation*}
by Corollaries~\ref{cr:stack_c_2n} and~\ref{cr:c_2n+1}.
Motivated by Sj\"ostrand's cover pebbling theorem, we introduce the Almost
Stacked Hypothesis (ASH), which asserts that extremal counterexamples may be
taken to be almost stacked. Assuming ASH, we aim to prove
\begin{equation*}
\stack{C_{2n}}=2^{n+1}-1,
\qquad
\clear{C_{2n+1}}=3\cdot 2^n-1.
\end{equation*}

Finally, we discuss trees. Computer computations suggest a simple explicit
formula for the stacking number of a tree in terms of the distances and degrees
of the vertices relative to a chosen root. This leads to a conjectural
expression for $\stack{T}$ for every finite connected tree, and our
computations verify this conjecture for all trees with at most seven vertices.

\section{Stacking and Clearing: Existence and Bounds}\label{sc:characterization}

\medskip
\noindent
In this section we characterize when the stacking and clearing numbers are
defined, and we derive general upper bounds for them. The main technical input
is the \nameref{tm:two-pebble-clearing}.

\begin{theorem}[Two-Pebble Clearing Theorem]\label{tm:two-pebble-clearing}
Let $G$ be a finite connected graph and let $c$ be a configuration on $G$ such
that $c(v)\ge2$ for all $v\in V(G)$.
\begin{enumerate}[(1)]
\item The configuration $c$ is stackable.
\item If $G$ is non-bipartite, then $c$ is clearable.
\end{enumerate}
\end{theorem}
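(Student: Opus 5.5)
The plan is to prove a stronger, rooted version of (1) by induction on $|V(G)|$: \emph{for every vertex $r$, a configuration $c$ with $c(v)\ge 2$ for all $v$ can be stacked at $r$.} Fix a spanning tree $T$ of $G$. If $|V(G)|\ge2$, then $T$ has at least two leaves, so there is a leaf $u\neq r$ of $T$. Let $w$ be its neighbour in $T$; then $G-u$ is connected.

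We first empty $u$ while keeping at least two pebbles on every other vertex.
\begin{itemize}
\item If $c(u)$ is even, apply $\pmove{u}{w}$ until $u$ is empty. The vertex $w$ only gains pebbles.
\item If $c(u)$ is odd, then $c(u)\ge3$. Apply $\pmove{u}{w}$ until $u$ holds one pebble. Now $w$ holds $c(w)+(c(u)-1)/2\ge3$ pebbles. Then apply $\pmove{w}{u}$ followed by $\pmove{u}{w}$. This empties $u$ and costs $w$ exactly one pebble net, so $w$ still has at least $2$.
\end{itemize}
In both cases we reach a configuration that is zero on $u$ and at least $2$ on every vertex of $G-u$. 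The induction hypothesis applied to $G-u$ and the root $r$ then stacks everything at $r$. Every move used lies in $G$, and the base case $|V(G)|=1$ is trivial. This proves (1), and in fact that stacking can be done at any prescribed vertex.

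For (2), I would run the same leaf-peeling procedure on a spanning connected unicyclic subgraph $H$ of $G$ whose unique cycle $C$ is odd. Such an $H$ exists: take an odd cycle in $G$ and extend it to a spanning connected subgraph by adding tree edges. We repeatedly remove a leaf of $H$ not on $C$, emptying it as above. This reduces the problem to the odd cycle $C=C_{2k+1}$ carrying at least $2$ pebbles on each vertex, with no pebbles elsewhere. It therefore suffices to show that such a configuration on an odd cycle is clearable.

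On $C=x_0x_1\dots x_{2k}x_0$, I would first use part (1) on the path $x_1\dots x_{2k}$ (rooted, say, at $x_{2k}$), keeping $x_0$'s pebbles aside. The aim is to arrive at a configuration supported on two adjacent vertices $x_0,x_{2k}$, or on $x_0$ alone. This again uses the "empty a leaf" step, now on a path whose end vertices are adjacent in $C$ to $x_0$.

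The remaining task is a single pile, or two adjacent piles, that must be whittled down to one pebble. Here the odd cycle is essential. Two useful moves are the gadget "$\pmove{w}{u}$ then $\pmove{u}{w}$ when $u$ holds one pebble", which removes exactly one pebble from $w$, and sending pebbles once around the odd cycle back to the start. Together these should let us change the count at a vertex by arbitrary small amounts and fix parity, so we can reduce step by step to a single pebble. In a bipartite graph this parity correction is exactly what fails.

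I expect this final clearing step on the odd cycle to be the main obstacle. One must keep track of small cases, since the pile may become too small to afford a trip around the cycle. To handle this, I would first try to clear while the other cycle vertices still hold their $\ge2$ pebbles, using them as "fuel" for the unit-decrement gadget. Only afterwards would I collapse the remainder.
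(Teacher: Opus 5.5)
Your part (1) is the paper's argument: you peel leaves of a spanning tree, using $\pmove{w}{u}$ then $\pmove{u}{w}$ when one pebble is left on the leaf. Your reduction of (2) to an odd cycle with at least two pebbles on every vertex also matches the paper's gathering lemma.

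\textbf{The gap is the odd-cycle step.} You leave it as a heuristic, and the heuristic aims at the wrong invariant.
\begin{enumerate}[(i)]
\item The obstruction in bipartite graphs is $\imb$ modulo $3$, not parity, and your unit-decrement gadget preserves $\imb$ modulo $3$, so it fixes nothing.
\item Sending pebbles around the cycle is unaffordable in general. By the Claim in the proof of Theorem~\ref{tm:pebblingodd}, a single pile on $C_{2k+1}$ whose size is divisible by $3$ and below $3\cdot 2^k$ is never clearable. A configuration with two pebbles per vertex has only $2(2k+1)$ pebbles.
\item Your proposed collapse to one pile, or to two adjacent piles, can destroy clearability.
\end{enumerate}

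Here is a concrete instance of (iii). On $C_5=x_0x_1x_2x_3x_4x_0$ take $c(x_0)=3$ and $c(x_i)=2$ otherwise.
\begin{itemize}
\item Stacking the path $x_1x_2x_3x_4$ at $x_4$ always leaves exactly $b=3$ pebbles there. The path imbalance forces $b\equiv0\pmod 3$, and the valuation $2^{-d(\cdot,x_4)}$ gives weight $15/4$, forcing $b\le 3$.
\item You therefore reach $3e_{x_0}\oplus 3e_{x_4}$, which is not clearable. With $R=2^{d(\cdot,\{x_0,x_4\})}$ its weight is $6<8$, so $x_2$ never holds two pebbles and never fires.
\item Hence every move, except possibly a final move into $x_2$, lies on the path $x_3x_4x_0x_1$. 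That path's imbalance starts at $3-3=0$, so it can reach neither a single pebble nor $2e_{x_1}$ or $2e_{x_3}$.
\end{itemize}

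\textbf{The missing idea} is to keep the pebbles spread out and use the oddness of the cycle only to decide which edge is never used. The paper proceeds as follows.
\begin{enumerate}[(a)]
\item Reduce to $2\le c\le 3$ with some $c(v_k)=2$.
\item The alternating sums satisfy $S_{k+1}\equiv -S_k+2c(v_k)\equiv -S_k+1\pmod 3$. So cutting one of two consecutive edges leaves a spanning path with $\imb\not\equiv0\pmod 3$ and still at least two pebbles per vertex.
\item On that path, take a reachable stacked configuration of minimal size, which exists by part (1). The moves $\pmove{w}{u},\pmove{w}{u},\pmove{u}{w}$ and a single $\pmove{w}{u}$ force its size to be $1$ or $3$. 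Size $3$ is excluded by the imbalance, so the configuration clears.
\end{enumerate}
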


For part~$(1)$, we reduce to a spanning tree and show that in a tree one can
eliminate leaves one by one while preserving at least two pebbles on every
remaining vertex. For part~$(2)$, we first gather all pebbles onto an odd cycle
and then clear that cycle.

\subsection*{Tree part}

\begin{lemma}[Leaf reduction]
Let $T$ be a tree, let $r$ be a leaf with unique neighbor $s$, and let $c$ be
a configuration on $T$ such that $c(v)\ge2$ for all $v\in V(T)$.
Then there exists $\tilde c\in\Pebl{c}{T}$ such that
\begin{equation*}
\tilde c(r)=0,
\qquad
\tilde c(v)\ge2 \ \text{for all } v\in V(T)\setminus\{r\},
\end{equation*}
and every move used from $c$ to $\tilde c$ takes place along the edge $rs$.
\end{lemma}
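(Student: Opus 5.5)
We argue by explicit moves along the single edge $rs$, splitting on the parity of $c(r)$. Only $c(r)$ and $c(s)$ will change, so every vertex other than $r,s$ keeps its original count $c(v)\ge2$. It therefore suffices to reach a configuration with $r$ empty and $s$ holding at least two pebbles. Write $a=c(r)\ge2$ and $b=c(s)\ge2$.

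\emph{Case 1: $a$ is even.} We apply the step $\pmove{r}{s}$ exactly $a/2$ times. This is possible because $r$ always has an even number of pebbles, at least $2$, until it is empty. The result has $\tilde c(r)=0$ and $\tilde c(s)=b+a/2\ge b+1\ge 3$, which is all we need.

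\emph{Case 2: $a$ is odd, so $a\ge3$.} The leftover single pebble on $r$ has to be brought back using a step $\pmove{s}{r}$. First we apply $\pmove{r}{s}$ exactly $(a-1)/2$ times, leaving one pebble on $r$ and $b+(a-1)/2\ge b+1\ge3$ pebbles on $s$. Next we apply $\pmove{s}{r}$ once, which is legal since $s$ has at least $3\ge2$ pebbles. Now $r$ holds $2$ pebbles and $s$ holds $b+(a-1)/2-2$. Finally we apply $\pmove{r}{s}$ once more, emptying $r$ and leaving $s$ with
\begin{equation*}
\tilde c(s)=b+\frac{a-1}{2}-1.
\end{equation*}
Since $a\ge3$ we have $(a-1)/2\ge1$, so $\tilde c(s)\ge b\ge2$.

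In both cases all moves take place along $rs$, and the bound at $s$ holds. The only delicate point is the odd case, where we must check that $s$ can afford the return move and still keep two pebbles. That check is exactly where the hypotheses $c(r)\ge2$ (forcing $a\ge3$ when $a$ is odd) and $c(s)\ge2$ are used, and it is the step I expect to be the main obstacle.
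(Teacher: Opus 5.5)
Your proof is correct and follows the same route as the paper: move pebbles from $r$ to $s$ until at most one remains on $r$, then remove a leftover pebble with one return move $\pmove{s}{r}$ followed by $\pmove{r}{s}$. You split into parity cases explicitly and give the exact bound $\tilde c(s)=b+\frac{a-1}{2}-1\ge b\ge 2$, where the paper uses $\lfloor a/2\rfloor$ moves and a cruder estimate, but the argument is the same.
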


\begin{proof}
Let $a=c(r)\ge2$ and $b=c(s)\ge2$. Apply the move $\pmove{r}{s}$ exactly
$\lfloor a/2\rfloor$ times. This produces a configuration $c_1$ with
\begin{equation*}
c_1(r)\in\{0,1\},
\qquad
c_1(s)=b+\lfloor a/2\rfloor\ge3.
\end{equation*}
If $c_1(r)=0$, we are done. If $c_1(r)=1$, then one move $\pmove{s}{r}$
followed by one move $\pmove{r}{s}$ yields a configuration $\tilde c$ with
\begin{equation*}
\tilde c(r)=0
\qquad\text{and}\qquad
\tilde c(s)\ge2.
\end{equation*}
All other vertices are unchanged.
\end{proof}

\begin{lemma}[Tree stacking]
Let $T$ be a finite tree and let $c$ be a configuration on $T$ such that
$c(v)\ge2$ for all $v\in V(T)$. Then for every $w\in V(T)$ there exists
$c'\in\Pebl{c}{T}$ such that
\begin{equation*}
c'(w)\ge 2,
\qquad
c'(u)=0 \qquad \text{for all } u\in V(T)\setminus\{w\}.
\end{equation*}
\end{lemma}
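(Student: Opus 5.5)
We argue by induction on $|V(T)|$, using the Leaf reduction lemma to peel off leaves other than $w$ one at a time. If $|V(T)|=1$, then $T$ consists of $w$ alone and $c'=c$ already satisfies $c'(w)\ge2$, with no other vertices to check.

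Now suppose $|V(T)|\ge2$. A tree with at least two vertices has at least two leaves, so we may choose a leaf $r\neq w$; let $s$ be its unique neighbor. By the Leaf reduction lemma there is $\tilde c\in\Pebl{c}{T}$ with $\tilde c(r)=0$ and $\tilde c(v)\ge2$ for all $v\neq r$. Every move used to reach $\tilde c$ runs along the edge $rs$.

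Set $T'=T-r$. This is again a tree, it contains $w$, and $\tilde c\restriction V(T')$ has at least two pebbles on every vertex. By the induction hypothesis there is a sequence of pebbling steps in $T'$ taking $\tilde c\restriction V(T')$ to a configuration $d$ with $d(w)\ge2$ and $d(u)=0$ for all $u\in V(T')\setminus\{w\}$.

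We then lift this sequence to $T$. Each of its steps uses an edge of $T'\subseteq T$ and never touches $r$, and $\tilde c(r)=0$. Hence the same steps are applicable in $T$ starting from $\tilde c$, and they yield the configuration $c'$ that agrees with $d$ on $V(T')$ and has $c'(r)=0$. Concatenating the two sequences gives $\pmapsto{c}{c'}$ in $T$, with $c'(w)\ge2$ and $c'(u)=0$ for every $u\neq w$, as required.

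The only points needing care are that a leaf other than $w$ always exists when $|V(T)|\ge2$, and that moves inside the subtree $T'$ remain legal in $T$. Both are immediate, so the Leaf reduction lemma carries essentially all of the work.
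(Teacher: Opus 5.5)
Your proposal is correct and follows essentially the same route as the paper's proof. Both argue by induction on $|V(T)|$, remove a leaf $r\neq w$ via the Leaf reduction lemma, apply the induction hypothesis on $T-r$, and lift the moves back to $T$ because $r$ stays empty.
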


\begin{proof}
We argue by induction on $|V(T)|$. The case $|V(T)|=1$ is immediate.

Fix a leaf $r\neq w$ with neighbor $s$. By the previous lemma, there exists
$\tilde c\in\Pebl{c}{T}$ such that $\tilde c(r)=0$ and
\begin{equation*}
\tilde c(v)\ge2 \qquad \text{for all } v\in V(T)\setminus\{r\}.
\end{equation*}
Now pass to the smaller tree $T-r$. By induction, the restriction
$\tilde c\restriction {T-r}$ can be stacked at $w$ within $T-r$. Since the vertex $r$
remains empty, the same pebbling sequence is valid in $T$.
\end{proof}

\begin{proof}[Proof of Theorem~\ref{tm:two-pebble-clearing}(1)]
Let $T$ be a spanning tree of $G$ and let $w\in V(T)$ be arbitrary. By the
preceding lemma, the configuration $c \restriction {T}$ can be stacked at $w$ using only
edges of $T$. Since $T$ is a subgraph of $G$, the same sequence of moves is
valid in $G$. This proves part~$(1)$.
\end{proof}

\subsection*{Cycle part}

To deal with clearing, we first recall the imbalance invariant for bipartite
graphs.
\begin{definition}\label{def:imb}
Let $G$ be a bipartite graph with bipartition $(V_0,V_1)$. For a configuration
$c$ on $G$, define
\begin{equation*}
\imb(c)=\sum_{v\in V_0}c(v)-\sum_{v\in V_1}c(v).
\end{equation*}
\end{definition}
Thus, $\imb(c)$ depends on the chosen bipartition, which will always be regarded
as fixed.

\begin{lemma}[Imbalance invariance]\label{lm:imbalance}
Let $G$ be a bipartite graph with bipartition $(V_0,V_1)$. If
$c'\in\Pebl{c}{G}$, then
\begin{equation*}
\imb(c')\equiv \imb(c)\pmod3.
\end{equation*}
\end{lemma}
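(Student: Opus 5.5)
Since $\Pebl{c}{G}$ consists of the configurations reachable from $c$ by finitely many pebbling steps, we argue by induction on the number of steps. It therefore suffices to show that a single step $c'=\poper{u}{v}{c}$ preserves $\imb$ modulo $3$. Transitivity of congruence then yields the statement for any finite sequence of steps, with the empty sequence giving $c'=c$ trivially.

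For a single step, let $uv\in E(G)$ with $c(u)\ge2$. Because $G$ is bipartite with bipartition $(V_0,V_1)$, the endpoints $u$ and $v$ lie in different classes. We split into two cases.

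If $u\in V_0$ and $v\in V_1$, the step decreases $\sum_{x\in V_0}c(x)$ by $2$ and increases $\sum_{x\in V_1}c(x)$ by $1$. Hence
\begin{equation*}
\imb(c')=\imb(c)-2-1=\imb(c)-3.
\end{equation*}
If $u\in V_1$ and $v\in V_0$, the $V_0$-sum increases by $1$ and the $V_1$-sum decreases by $2$. Hence
\begin{equation*}
\imb(c')=\imb(c)+1+2=\imb(c)+3.
\end{equation*}

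In both cases $\imb(c')\equiv\imb(c)\pmod 3$, because all other vertices are unchanged by the step. There is no genuine obstacle here. The only point needing care is that bipartiteness guarantees the two endpoints of every edge lie in opposite classes, so these two cases exhaust all possibilities.
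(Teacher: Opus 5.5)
Your proof is correct and follows the paper's argument: because the endpoints of every edge lie in opposite classes, each single pebbling step changes $\imb$ by exactly $\pm3$, and induction on the number of steps gives the congruence. The paper states this in one line, and your case split just writes out the two possibilities explicitly.
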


\begin{proof}
Each pebbling step changes $\imb(c)$ by $\pm3$.
\end{proof}

We next analyze odd cycles.

\begin{lemma}[Cycle reduction]
Let $C_n$ be a cycle and let $c$ be a configuration on $C_n$ such that
$c(v)\ge2$ for all $v\in V(C_n)$. Then there exists
$c'\in\Pebl{c}{C_n}$ such that
\begin{equation*}
2\le c'(v)\le3 \qquad \text{for all } v\in V(C_n),
\end{equation*}
and
\begin{equation*}
c'(u)=2 \qquad \text{for some } u\in V(C_n).
\end{equation*}
\end{lemma}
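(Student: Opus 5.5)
Label the vertices of $C_n$ cyclically as $v_0,v_1,\dots,v_{n-1}$, indices mod $n$. The plan is a monotone reduction: as long as some vertex carries at least $4$ pebbles, make a move out of it, and never let any vertex drop below $2$. The potential is the total number of pebbles $\norm{c}$, which drops by one with each move, so the process terminates.

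Suppose $c(v)\ge2$ for all $v$ and some vertex $v_i$ has $c(v_i)\ge4$. Apply $\pmove{v_i}{v_{i+1}}$. Afterwards $v_i$ still has at least $2$ pebbles and $v_{i+1}$ has gained one, so the invariant ``$c(v)\ge2$ for all $v$'' is preserved. Iterate this for as long as a vertex with at least $4$ pebbles exists. Since the size strictly decreases, we reach a configuration $c''\in\Pebl{c}{C_n}$ with $2\le c''(v)\le3$ for every $v$.

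It remains to guarantee a vertex with exactly $2$ pebbles. If $c''$ already has one, we are done. Otherwise $c''(v)=3$ for all $v$, and we perform one more step. Apply $\pmove{v_0}{v_1}$: now $v_0$ has $1$ and $v_1$ has $4$. Then apply $\pmove{v_1}{v_0}$: now $v_1$ has $2$ and $v_0$ has $2$. The result has $c'(v_0)=c'(v_1)=2$ and $c'(v)=3$ elsewhere, which satisfies both requirements. The intermediate configuration with $c(v_0)=1$ is harmless, because only the final configuration matters.

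The only point needing care is this last step, since the greedy phase alone could stall at the all-$3$ configuration. The two-move back-and-forth fixes it while keeping every vertex within $[2,3]$. For $n$ small, say $C_3$, the same argument applies verbatim because only adjacency of $v_0$ and $v_1$ is used.
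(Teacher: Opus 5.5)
Your proof is correct and follows the paper's approach: greedily move pebbles out of any vertex with at least four pebbles while keeping every vertex at $2$ or more, then treat the all-$3$ configuration separately. Two small differences are worth noting. Your termination measure $\norm{c}$ is the right one; the paper's $\Phi$ does not strictly decrease when a vertex with $4$ pebbles sends to a vertex with $3$. Your two-move back-and-forth along a single edge is also a valid shortcut for the paper's full trip around the cycle, which produces the constant configuration $2$.
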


\begin{proof}
Repeatedly perform pebbling moves from vertices carrying at least four pebbles.
This strictly decreases
\begin{equation*}
\Phi(c)=\sum_{v\in V(C_n)}\max\{0,c(v)-3\},
\end{equation*}
while preserving the inequality $c(v)\ge2$ at every vertex. Hence, the process
terminates. At termination either some vertex carries two pebbles, in which
case we are done, or every vertex carries exactly three pebbles. In the latter
case, performing one move along each edge of the cycle in cyclic order yields
the constant configuration~$2$.
\end{proof}

\begin{lemma}[Path clearing]
Let $P_k$ be a path and let $c$ be a configuration on $P_k$ such that
$c(v)\ge2$ for all $v\in V(P_k)$. If
\begin{equation*}
\imb(c)\not\equiv0\pmod3,
\end{equation*}
then $c$ is clearable.
\end{lemma}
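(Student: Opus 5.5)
The plan is to reduce to a single edge, using the Leaf reduction lemma together with Lemma~\ref{lm:imbalance}, and then to observe that on one edge the imbalance invariant forces every terminal configuration to be cleared. Throughout I assume $k\ge2$. For $k=1$ no move is available, so a configuration with $c(v)\ge2$ can never be cleared; the statement must be read for paths with at least one edge.

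\emph{Step 1 (shrinking the path).} Let $P_k=v_1v_2\cdots v_k$ with $k\ge3$. Apply the Leaf reduction lemma to the leaf $r=v_1$ with neighbour $s=v_2$. This gives $\tilde c\in\Pebl{c}{P_k}$ with $\tilde c(v_1)=0$ and $\tilde c(v)\ge2$ for every other vertex.
\begin{itemize}
\item By Lemma~\ref{lm:imbalance}, $\imb(\tilde c)\equiv\imb(c)\not\equiv0\pmod 3$.
\item Since $v_1$ is empty, $\imb(\tilde c\restriction\{v_2,\dots,v_k\})=\pm\imb(\tilde c)$, computed in the path $v_2\cdots v_k$ with its induced bipartition. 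The sign depends only on which class is called $V_0$, so the condition $\not\equiv0\pmod 3$ is unaffected.
\item Any pebbling sequence in the subpath is also valid in $P_k$, and $v_1$ stays empty throughout.
\end{itemize}
Iterating this, we may assume $k=2$: we have a single edge $uv$ with $c(u)=a$, $c(v)=b$ and $a-b\not\equiv0\pmod3$.

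\emph{Step 2 (clearing an edge).} On $P_2$, perform pebbling steps greedily in any order, as long as some vertex carries at least two pebbles. Each step decreases the size by one, so the process terminates at a configuration $d$ with $d(u),d(v)\le1$. The only such configurations are $(0,0)$, $(1,1)$, $(1,0)$ and $(0,1)$, with imbalances $0,0,1,-1$ respectively. By Lemma~\ref{lm:imbalance}, $\imb(d)\equiv a-b\not\equiv0\pmod3$, which rules out $(0,0)$ and $(1,1)$. Hence $d$ is one of $(1,0)$ or $(0,1)$, so $d$ is cleared. Combining this with Step 1, $c$ is clearable in $P_k$.

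\emph{Main obstacle.} The only delicate point is the bookkeeping in Step 1, namely that the residue condition survives passing to the subpath. This is handled by the vanishing at the removed leaf and the sign remark above. The hypothesis $c(v)\ge2$ is needed only to keep the Leaf reduction lemma applicable at each stage; the final two-vertex argument works for any configuration with nonzero imbalance mod $3$.
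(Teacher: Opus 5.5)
Your proof is correct. It differs from the paper's only in the endgame.

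The paper first invokes part~(1) of the Two-Pebble Clearing Theorem to reach some stacked configuration. It then takes a reachable stacked configuration $d$ of minimal size, stacked at $w$, and shows that $d(w)\in\{1,3\}$ using local moves with a neighbour $u$ of $w$:
\begin{itemize}
\item if $d(w)\ge4$, the moves $\pmove{w}{u}$, $\pmove{w}{u}$, $\pmove{u}{w}$ give a smaller stack at $w$;
\item if $d(w)=2$, a single $\pmove{w}{u}$ gives a stack of size $1$.
\end{itemize}
The value $3$ is then excluded because a stack of three has imbalance $\pm3\equiv0\pmod 3$.

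You instead stop the leaf reductions one step earlier, at a single edge. In place of the minimality argument, you observe that on $P_2$ every maximal sequence of moves ends in $(1,0)$, $(0,1)$ or $(1,1)$, and $(1,1)$ is excluded by the invariant.

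Your route is slightly more constructive: any greedy continuation on the last edge succeeds, no extremal choice is needed, and it uses only the Leaf reduction lemma. The paper's route is shorter given the stacking theorem already proved. Its ``minimal reachable stack has size $1$ or $3$'' step also works verbatim on any connected graph with an edge once stackability is known, so only the exclusion of $3$ is specific to bipartite graphs.

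Your caveat that $k\ge2$ is needed is correct. On $P_1$ with $c=2$ the imbalance is $\pm2\not\equiv0\pmod 3$, yet no move is possible, so the statement as written fails there. The paper's proof also tacitly assumes that $w$ has a neighbour $u$. This is harmless in the paper, since the lemma is only applied to paths obtained by deleting an edge of an odd cycle.
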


\begin{proof}
By part~$(1)$ of the Two-Pebble Clearing Theorem, some stacked configuration is
reachable from $c$. Choose one of minimal size, say $d$, stacked at a vertex
$w$.

If $d(w)\ge4$, then with a neighbor $u$ of $w$ on the path, the three moves
\begin{equation*}
\pmove{w}{u},\qquad \pmove{w}{u},\qquad \pmove{u}{w}
\end{equation*}
produce a smaller stacked configuration, contradicting minimality. Hence,
$d(w)\le3$.

If $d(w)=2$, one more move reduces the size, again a contradiction. Thus,
$d(w)\in\{1,3\}$. Since
\begin{equation*}
\imb(d)\equiv \imb(c)\not\equiv0\pmod3
\end{equation*}
by Lemma~\ref{lm:imbalance}, the case $d(w)=3$ is impossible, because a stack of
size three has imbalance $\pm3\equiv0\pmod3$. Therefore, $d(w)=1$.
\end{proof}

\begin{lemma}[Odd-cycle clearing]\label{lm:odd-cycle-clearing}
Let $n$ be odd and let $c$ be a configuration on $C_n$ such that
$c(v)\ge2$ for all $v\in V(C_n)$. Then $c$ is clearable.
\end{lemma}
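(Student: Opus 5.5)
The plan is to reduce to the Path clearing lemma by cutting the cycle at a suitably chosen edge. The Cycle reduction lemma provides the starting configuration, and an imbalance computation shows that some cut works.

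\emph{Step 1: normalize.} By the Cycle reduction lemma we may replace $c$ by some $c'\in\Pebl{c}{C_n}$ with $2\le c'(v)\le 3$ for all $v$ and $c'(u)=2$ for at least one vertex $u$. Since $\Pebl{c'}{C_n}\subseteq\Pebl{c}{C_n}$, it suffices to show that $c'$ is clearable. We rename $c'$ as $c$.

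\emph{Step 2: cut the cycle.} Label the cycle $v_1,\dots,v_n$ cyclically, with indices taken mod $n$. For each $k$, deleting the edge $v_{k-1}v_k$ leaves a spanning path
\begin{equation*}
P^{(k)}=v_k v_{k+1}\cdots v_{k+n-1}.
\end{equation*}
Give $P^{(k)}$ the bipartition that puts $v_k$ in $V_0$, and let
\begin{equation*}
I_k=\sum_{j=0}^{n-1}(-1)^j c(v_{k+j})
\end{equation*}
be the resulting imbalance. Since $n$ is odd, the last vertex $v_{k+n-1}$ also has sign $+$. A direct comparison of the two alternating sums gives the rotation identity
\begin{equation*}
I_{k+1}=2c(v_k)-I_k .
\end{equation*}
Indeed, removing $v_k$ from the front and appending it at the end flips the sign of every other term and adds $c(v_k)$ once more.

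\emph{Step 3: find a good cut.} We claim some $k$ has $I_k\not\equiv 0\pmod 3$. Suppose instead that $I_k\equiv0\pmod3$ for every $k$. The rotation identity then gives $2c(v_k)\equiv 0\pmod 3$, so $c(v_k)\equiv0\pmod 3$, for every $k$. Since $c(v_k)\in\{2,3\}$, this forces every vertex to carry exactly $3$ pebbles, contradicting $c(u)=2$. (Alternatively, take $k$ with $v_k=u$: if $I_k\equiv 0$, then $I_{k+1}\equiv 2\cdot 2\equiv1$.)

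\emph{Step 4: clear.} Fix a $k$ with $I_k\not\equiv0\pmod3$. The restriction of $c$ to the path $P^{(k)}$ has at least two pebbles on every vertex, and its imbalance is nonzero mod $3$. By the Path clearing lemma it can be cleared using moves along edges of $P^{(k)}$. Since $P^{(k)}$ is a subgraph of $C_n$, the same sequence of moves is valid in $C_n$, so $c$ is clearable.

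The only delicate point is Step 3: the existence of a cut with nonzero imbalance mod $3$. The rotation identity settles it, and the vertex with exactly two pebbles supplied by the Cycle reduction lemma is exactly what rules out the all-threes obstruction.
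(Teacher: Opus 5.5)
Your proof is correct and is essentially the paper's argument: you normalize via the Cycle reduction lemma, use the rotation identity $I_{k+1}=2c(v_k)-I_k$ for the alternating sums (valid because $n$ is odd), and cut at an edge so that the resulting spanning path has imbalance $\not\equiv 0 \pmod 3$, which lets the Path clearing lemma apply. The paper's Step~3 is exactly your ``alternative'': it cuts at $k$ or $k+1$, where $c(v_k)=2$. Your main version instead argues globally that $I_k\equiv 0$ for every $k$ would force every vertex to carry $3$ pebbles; both versions work.
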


\begin{proof}
By the cycle-reduction lemma, we may assume that
\begin{equation*}
2\le c\le3
\end{equation*}
pointwise and that $c(v_k)=2$ for some $k$.

For each $j$, define
\begin{equation*}
S_j=\sum_{i=0}^{n-1}(-1)^i c(v_{j+i})\pmod3.
\end{equation*}
Since $n$ is odd,
\begin{equation*}
S_{k+1}\equiv -S_k+2c(v_k)\equiv -S_k+1\pmod3.
\end{equation*}
Hence, at least one of $S_k$ and $S_{k+1}$ is nonzero modulo~$3$. Choose
$j\in\{k,k+1\}$ such that $S_j\not\equiv0\pmod3$, and delete the edge
$v_{j-1}v_j$. The resulting graph is a path, and its imbalance is
either $S_j$ or $-S_j$ modulo~$3$. Hence, this path has nonzero imbalance, so the
previous lemma
applies and clears the configuration. Since all moves use edges of the path, they are also valid on the
cycle.
\end{proof}

The last ingredient is a reduction from a general connected non-bipartite graph
to an odd cycle.

\begin{lemma}[Gathering onto a cycle]
Let $G$ be a connected graph, let $C\subseteq G$ be a cycle, and let $c$ be a
configuration on $G$ such that $c(v)\ge2$ for all $v\in V(G)$. Then there
exists $c'\in\Pebl{c}{G}$ such that
\begin{equation*}
c'(v)=0 \ \text{for } v\notin V(C),
\qquad
c'(v)\ge2 \ \text{for } v\in V(C).
\end{equation*}
\end{lemma}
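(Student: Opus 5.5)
We argue by induction on the number of vertices outside $C$, imitating the Tree stacking lemma. If $V(G)=V(C)$ there is nothing to do: take $c'=c$. Otherwise, since $G$ is connected, we want to find a vertex $r\notin V(C)$ whose removal keeps the relevant part connected. We do not need $G-r$ itself to be connected; it is enough to work inside a subgraph.

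Fix a spanning connected subgraph $H$ of $G$ that consists of $C$ together with a forest attached to it. To build $H$, contract $C$ to a single vertex, take a spanning tree of the contracted graph, and lift it back; this gives $C$ plus tree edges, each of which has at most one endpoint on $C$ or joins two outside vertices. In $H$, every vertex outside $C$ lies in a tree hanging off $C$. So if $V(H)\neq V(C)$, there is a vertex $r\notin V(C)$ of degree $1$ in $H$: take a vertex of the lifted spanning tree at maximal distance from the contracted vertex. Let $s$ be its unique $H$-neighbor.

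Apply the Leaf reduction step along the edge $rs$. Its proof only uses the two vertices $r,s$ and the edge between them, so it works verbatim in $H$ even though $H$ is not a tree. Concretely, move $\lfloor c(r)/2\rfloor$ times from $r$ to $s$. If one pebble is left on $r$, perform $\pmove{s}{r}$ and then $\pmove{r}{s}$; this is possible because $s$ now has at least $3$ pebbles. The result has $r$ empty, $s$ with at least $2$ pebbles, and all other vertices unchanged.

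Now delete $r$. The graph $H-r$ is still connected, contains $C$, has one fewer vertex outside $C$, and the restricted configuration has at least $2$ pebbles on every vertex. By induction, there is a pebbling sequence in $H-r$ reaching a configuration that vanishes off $C$ and has at least $2$ pebbles on each vertex of $C$. Since $r$ stays empty and $H-r\subseteq G$, the concatenated sequence is valid in $G$, and the result vanishes on $r$ as well. This completes the induction.

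The only point requiring care is the existence of the leaf $r$ outside $C$ with $H-r$ still connected and containing $C$. That is exactly why we pass to the subgraph $H$ ($C$ plus a forest) rather than working in $G$ directly. Once $H$ is built by contracting $C$, the rest is a direct copy of the tree argument.
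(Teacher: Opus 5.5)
Your proof is correct, and it rests on the same leaf-peeling mechanism as the paper. The organization is different.

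\begin{itemize}
\item The paper splits $G\setminus V(C)$ into its connected components $H$. For each one it picks a cycle vertex $v_H$ adjacent to $H$ and applies the Tree stacking lemma as a black box to a spanning tree of $G[H\cup\{v_H\}]$, with target $w=v_H$. So each component is dumped onto a single cycle vertex, and that vertex still ends with at least two pebbles. The components are then handled one after another.
\item You instead build one global spanning subgraph, $C$ plus a pendant forest obtained by contracting $C$. You then run a single induction on the number of vertices outside $C$, redoing the leaf-reduction step along the unique edge at a maximal-distance leaf $r$. After each step you recurse on the connected graph $H-r$.
\end{itemize}

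What each buys: the paper's route is shorter. It reuses the already-proved Tree stacking lemma and needs no contraction argument, only the observation that $G[H\cup\{v_H\}]$ is connected. Your route is self-contained, using only the two-vertex leaf-reduction step, at the cost of constructing and justifying the contracted spanning tree. Your construction can also route different parts of one component of $G\setminus V(C)$ to different cycle vertices, which is harmless.
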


\begin{proof}
Let $\mathcal H$ be the set of connected components of $G\setminus V(C)$. For
each $H\in\mathcal H$, choose a vertex $v_H\in V(C)$ adjacent to some vertex of
$H$. Then $G[H\cup\{v_H\}]$ is connected. Applying the tree-stacking lemma to a
spanning tree of this graph moves all pebbles from $H\cup\{v_H\}$ onto $v_H$,
and throughout this process at least two pebbles remain on $v_H$. After
doing this for every $H\in\mathcal H$, each vertex of $C$ still carries at
least two pebbles, and all remaining pebbles lie on $C$.
\end{proof}

\begin{proof}[Proof of Theorem~\ref{tm:two-pebble-clearing}(2)]
Since $G$ is connected and non-bipartite, it contains an odd cycle $C$. By the
gathering lemma, we may move all pebbles onto $C$ while preserving at least two
pebbles on each vertex of $C$. Lemma~\ref{lm:odd-cycle-clearing} then clears the
resulting configuration to a single pebble. Hence, the original configuration is
clearable.
\end{proof}

\subsection*{Existence and bounds}

With the Two-Pebble Clearing Theorem in hand, the proof of the
Stacking--Clearing Theorem becomes short.

\begin{theorem}[Stacking--Clearing Theorem]\label{tm:stack-clear-def}
Let $G$ be a finite simple graph.
\begin{enumerate}[(1)]
\item The stacking number $\stack{G}$ is defined if and only if $G$ is connected.
\item The clearing number $\clear{G}$ is defined if and only if $G$ is connected
and non-bipartite.
\end{enumerate}
Moreover, whenever these parameters are defined, they satisfy
\begin{equation*}
\stack{G},\ \clear{G}\le 2\cdot |V(G)|\cdot 2^{\operatorname{diam}(G)}.
\end{equation*}
\end{theorem}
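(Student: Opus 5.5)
The proof has two parts: necessity, by exhibiting configurations of every size that are neither stackable nor clearable, and sufficiency together with the upper bound, by pebbling two pebbles onto every vertex and then invoking the \nameref{tm:two-pebble-clearing}.

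\textbf{Necessity.} Suppose $G$ is disconnected, with components $A$ and $B$. For every $t\ge2$, place $t-1$ pebbles on a vertex of $A$ and one pebble on a vertex of $B$. Pebbling moves never transfer pebbles between components. Moreover, a component containing exactly one pebble admits no moves at all, so that pebble can never be removed. Hence the pebble in $B$ stays forever. The $t-1\ge1$ pebbles in $A$ can never all be removed either, because a pebbling step never decreases the size of a nonempty configuration to $0$ (each step leaves at least the pebble it places). So every reachable configuration has nonempty support in both $A$ and $B$; it is neither stacked nor cleared. Thus neither parameter is defined, which also settles the connectivity requirement in~(2).

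If $G$ is connected and bipartite, fix a bipartition $(V_0,V_1)$. For each $t$, choose a configuration of size $t$ whose imbalance is $\equiv 0\pmod 3$, i.e.\ with $\imb\not\equiv\pm1$:
\begin{itemize}
\item if $t\equiv0\pmod3$, put all pebbles on one vertex;
\item otherwise, split the pebbles between one vertex of $V_0$ and one vertex of $V_1$ so that $\imb\equiv0$, which is possible since $a-b\equiv0\pmod 3$ with $a+b=t$ is solvable whenever $t\ge2$.
\end{itemize}
A cleared configuration has imbalance $\pm1$. By Lemma~\ref{lm:imbalance} it is therefore unreachable, so $\clear{G}$ is undefined.

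\textbf{Sufficiency and the bound.} Let $G$ be connected, with $n=|V(G)|$ and $d=\operatorname{diam}(G)$. The plan is to use cover pebbling. Let $w\equiv2$ be the demand function. By Sj\"ostrand's theorem~\cite{Sj05}, the cover number for $w$ is attained at a stacked configuration, and so equals
\begin{equation*}
\max_{v}\sum_{u}2\cdot2^{\operatorname{dist}(u,v)}\le 2n2^{d}.
\end{equation*}
So every configuration with at least $2n2^d$ pebbles can reach some $c'$ with $c'\ge2$ pointwise. By the \nameref{tm:two-pebble-clearing}, $c'$ is stackable, and clearable if $G$ is non-bipartite. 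Since reachability composes, every configuration of size $t\ge 2n2^d$ is stackable (respectively clearable). This gives both definedness and the upper bound.

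\textbf{Main obstacle.} The one subtle point is that $\stack{G}$ is defined as the least $t$ such that \emph{every} configuration of size exactly $t$ works. The parameter is then well-defined as a finite minimum because the set of good $t$ is nonempty; monotonicity in $t$ is not needed. Beyond that, the only care needed is in quoting Sj\"ostrand's theorem correctly for positive demands. The disconnected and bipartite constructions are routine, though for very small $t$ (such as $t=2$) they must be checked to be valid configurations, which they are.
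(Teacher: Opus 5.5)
Your proposal is correct and is essentially the paper's proof: disconnected components and the mod-$3$ imbalance invariant give necessity, and Sj\"ostrand's theorem with constant demand $2$ followed by the Two-Pebble Clearing Theorem gives existence and the bound; your explicit witnesses of every size $t\ge 2$ just spell out what the paper leaves implicit. The only unchecked corner is $G=K_1$, where $V_1=\emptyset$ and your two-vertex split is unavailable, but no move exists there at all, so no configuration with at least two pebbles is clearable.
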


\begin{proof}
If $G$ is disconnected, then pebbles cannot move between components, so
$\stack{G}$ is undefined. If $G$ is bipartite, then imbalance modulo~$3$ is
preserved, and therefore a configuration with imbalance~$0$ cannot be reduced to
a single pebble. Thus, $\clear{G}$ is undefined.

Conversely, assume first that $G$ is connected. Let $w_2$ be the constant
function~$2$ on $V(G)$. By Sj\"ostrand's theorem, every configuration of size at
least
\begin{equation*}
\Gamma_{w_2}(G)=2\cdot\max_{u\in V(G)}\sum_{v\in V(G)}2^{d(u,v)}
\end{equation*}
can be transformed into one with at least two pebbles on each vertex. By the
Two-Pebble Clearing Theorem, such a configuration is stackable. Hence,
\begin{equation*}
\stack{G}\le \Gamma_{w_2}(G)\le 2\cdot |V(G)|\cdot 2^{\operatorname{diam}(G)}.
\end{equation*}

If $G$ is also non-bipartite, the same argument together with part~$(2)$ of the
Two-Pebble Clearing Theorem gives
\begin{equation*}
\clear{G}\le \Gamma_{w_2}(G)\le 2\cdot |V(G)|\cdot 2^{\operatorname{diam}(G)}.
\end{equation*}
\end{proof}

   \section{The Pebbling Number and the Stacking Number}
      We first relate the stacking number to the classical pebbling number.

      \begin{theorem}\label{TH_Pebblingandpebblestacking}
         For every finite connected graph $G$, we have
         \begin{equation*}
         \stack{G}\ge \pi(G)+1.
         \end{equation*}
         \end{theorem}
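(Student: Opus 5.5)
The plan is to exhibit, for every $t$ with $2\le t\le \pi(G)$, a configuration of size $t$ that is not stackable. Since $\stack{G}$ is defined as the least $t\ge 2$ for which \emph{every} configuration in $\conf{G}{t}$ is stackable, and I do not want to rely on monotonicity in $t$, I will handle every such $t$ directly. This forces $\stack{G}\ge\pi(G)+1$. The case of a single vertex needs a separate remark, or is excluded by convention; I assume $|V(G)|\ge 2$, so $\pi(G)\ge |V(G)|\ge 2$.

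\textbf{Construction.} By the definition of $\pi(G)$, there is a vertex $r$ and a configuration $c_0$ of size $\pi(G)-1$ from which no pebble can be moved to $r$; in particular $c_0(r)=0$. Given $2\le t\le\pi(G)$, delete pebbles from $c_0$ to obtain $c\le c_0$ with $\norm{c}=t-1\ge 1$. Removing pebbles cannot help reach $r$, since any pebbling sequence valid from $c$ is valid from $c_0$, so $c$ is still $r$-unsolvable. Set $d=c\oplus e_r$, which has size $t$.

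\textbf{Key step.} I claim that in every configuration reachable from $d$, vertex $r$ carries exactly one pebble and some other vertex carries at least one pebble; hence no such configuration is stacked.

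First, $r$ never receives a pebble. Suppose some sequence does deliver one, and consider the first step that places a pebble on $r$. Before that step, $r$ holds exactly one pebble, so no move out of $r$ has been applicable. Hence the prefix of the sequence, including this step, uses only pebbles of $c$ and is applicable to $c$ itself. It would therefore move a pebble to $r$ from $c$, a contradiction. Consequently $r$ always holds exactly one pebble, and all moves act on the pebbles originating from $c$ within $V(G)\setminus\{r\}$.

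Second, the total number of pebbles on $V(G)\setminus\{r\}$ never drops to $0$. A move removes two pebbles from a vertex holding at least two, and it either adds one pebble back to $V(G)\setminus\{r\}$ or would add it to $r$, which is impossible by the first point. So a positive total stays positive. Since $\norm{c}\ge1$, the support always contains $r$ together with another vertex, and the claim follows.

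The main subtlety, rather than a real obstacle, is the possibility that the final stack consists of the single pebble on $r$. The second point rules this out by showing that the pebbles coming from $c$ can never all vanish. I also expect that the diameter corollary $\stack{G}\ge 2^{\operatorname{diam}(G)}+1$ then follows from the known bound $\pi(G)\ge 2^{\operatorname{diam}(G)}$.
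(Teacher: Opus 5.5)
Your proof is correct and follows the paper's approach: add one pebble at an unreachable target $r$ to an $r$-unsolvable configuration, note that this pebble is frozen at $r$, and conclude that the result can never be stacked. The paper phrases this as a contradiction at the single value $t=\stack{G}$ rather than treating every $t\le\pi(G)$; your explicit check that the pebbles on $V(G)\setminus\{r\}$ can never all disappear fills in a step the paper leaves implicit (and the one-vertex case needs no special remark, since the range of $t$ is then empty and $\stack{G}\ge 2$ by definition).
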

         
         \begin{proof}
         Suppose, toward a contradiction, that
         \begin{equation*}
         \stack{G}\le \pi(G).
         \end{equation*}
         Then there exists a configuration $c$ on $G$ with
         \begin{equation*}
         \norm{c}=\stack{G}-1
         \end{equation*}
         and a vertex $v\in V(G)$ that is not reachable from $c$.

         Let $e_v$ be the configuration placing one pebble on $v$. Then
         \begin{equation*}
         \norm{c\oplus e_v}=\stack{G},
         \end{equation*}
         so the configuration $c\oplus e_v$ is stackable. Let
         \begin{equation*}
         c\oplus e_v=c_0,c_1,\dots,c_r=c'
         \end{equation*}
         be a pebbling sequence ending in a stacked configuration $c'$.

         Since $v$ is not reachable from $c$, no pebble from $c$ can ever be moved
onto $v$. Hence, the extra pebble initially placed at $v$ can never take
         part in a pebbling move, because there is never a second pebble at $v$
         with which to move one pebble away from $v$. Therefore, this pebble
         remains at $v$ throughout the whole sequence. Since $c'$ is stacked, its
         support is a single vertex, so necessarily $c'$ is stacked at $v$. In
         particular, all pebbles of $c$ have been moved to $v$, contradicting the
         choice of $c$ and $v$.

         This contradiction proves the theorem.
         \end{proof}

      \begin{corollary}\label{tm:stackinganddiameter}
         For all finite connected graphs $G$, we have
         \begin{equation*}
         \stack{G}\ge 2^{\operatorname{diam}(G)}+1.
         \end{equation*}
     \end{corollary}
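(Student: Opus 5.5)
The plan is to derive the bound from Theorem~\ref{TH_Pebblingandpebblestacking} together with the classical lower bound $\pi(G)\ge 2^{\operatorname{diam}(G)}$. Combining the two gives
\begin{equation*}
\stack{G}\ge \pi(G)+1\ge 2^{\operatorname{diam}(G)}+1,
\end{equation*}
so the only thing left to verify is the classical bound.

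For the classical bound, write $d=\operatorname{diam}(G)$ and choose vertices $u,v$ with $d(u,v)=d$. Consider the configuration $c=(2^d-1)\cdot e_u$. We claim that no pebble can be moved from $c$ to $v$, which shows $\pi(G)>2^d-1$.

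To prove the claim we use the standard weight argument. For a configuration $c'$, define
\begin{equation*}
W(c')=\sum_{x\in V(G)}c'(x)\,2^{-d(x,v)}.
\end{equation*}
Now consider a step $\pmove{x}{y}$ along an edge. It removes weight $2\cdot 2^{-d(x,v)}$ and adds weight $2^{-d(y,v)}$. Since $d(y,v)\ge d(x,v)-1$, the added weight is at most $2^{1-d(x,v)}$. Hence $W$ never increases under pebbling steps.

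Initially $W(c)=(2^d-1)2^{-d}<1$. On the other hand, any configuration with a pebble on $v$ has $W\ge 1$. Such a configuration is therefore unreachable from $c$, which proves the claim.

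None of these steps is a real obstacle. The only care needed is to note that $G$ is connected, so that $\operatorname{diam}(G)$ is finite and the statement makes sense. Also, if $d=0$ the bound reads $\stack{G}\ge 2$, which holds by the definition of $\stack{G}$ as an integer $t\ge 2$.
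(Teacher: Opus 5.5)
Your proposal is correct and follows the paper's proof: you combine Theorem~\ref{TH_Pebblingandpebblestacking} with the classical bound $\pi(G)\ge 2^{\operatorname{diam}(G)}$. The only difference is that the paper cites this bound (Chung's Fact~2), whereas you prove it with the weight $2^{-d(x,v)}$, which is an instance of the valuation method of Lemma~\ref{lm:decre-valuation}.
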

\begin{proof}
By Chung~\cite{Cu89}*{Fact 2}, the classical pebbling number satisfies
\begin{equation*}
\pi(G)\ge 2^{\operatorname{diam}(G)}.
\end{equation*}
The result follows from Theorem~\ref{TH_Pebblingandpebblestacking}.
\end{proof}

\section{Valuation Method}

Valuations provide a convenient way to prove lower bounds for stacking and
clearing numbers. The basic idea is to assign positive weights to the vertices
of a graph in such a way that every pebbling move does not increase the total
weighted sum of a configuration. Consequently, if a target configuration has
larger weighted sum than the initial configuration, then it cannot be reached by
pebbling moves. We shall use this method repeatedly in the sequel.

\begin{definition}\label{DF_valuation}
   Let $G$ be a finite graph.
   A \emph{valuating function} on $G$ is a map
   \begin{equation*}
   R\colon V(G)\longrightarrow \mathbb{R}_{>0}
   \end{equation*}
   such that for every edge $uv\in E(G)$,
   \begin{equation*}
   R(v)\le 2R(u).
   \end{equation*}
   
   For an edge $uv\in E(G)$ and a valuating function $R$, we define the
   \emph{loss} of a pebbling step $\pmove{u}{v}$ by
   \begin{equation*}
   \ell_R(\pmove{u}{v})=2R(u)-R(v)\ge 0.
   \end{equation*}
   Thus $\ell_R(\pmove{u}{v})$ measures the amount by which the $R$-valuation
   decreases when the move $\pmove{u}{v}$ is performed.
   
   For a configuration $c$ on $G$, the \emph{$R$-valuation} (or \emph{$R$-weight})
   of $c$ is defined by
   \begin{equation*}
   w_R(c)=\sum_{v\in V(G)} R(v)\,c(v).
   \end{equation*}
   \end{definition}
   
   \begin{lemma}\label{lm:decre-valuation}
      Let $G$ be a finite graph and let $R$ be a valuating function on $G$.
      If $c_0\mapsto c_1$, then
      \begin{equation*}
      w_R(c_0)\ge w_R(c_1).
      \end{equation*}
      \end{lemma}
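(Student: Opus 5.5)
The plan is a direct computation for a single pebbling step, followed by induction on the length of the pebbling sequence. By definition, $c_0\mapsto c_1$ means there is a finite sequence
\begin{equation*}
c_0=d_0,d_1,\dots,d_k=c_1
\end{equation*}
in which each $d_{i+1}=\poper{u_i}{v_i}{d_i}$ for some edge $u_iv_i\in E(G)$ with $d_i(u_i)\ge2$. It is enough to show $w_R(d_i)\ge w_R(d_{i+1})$ for each $i$. Chaining these inequalities (formally, induction on $k$, with the case $k=0$ trivial) then gives $w_R(c_0)\ge w_R(c_1)$.

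For a single step $c'=\poper{u}{v}{c}$, the configurations $c$ and $c'$ agree except at $u$ and $v$. There $c'(u)=c(u)-2$ and $c'(v)=c(v)+1$, and $u\neq v$ since $G$ is simple and $uv$ is an edge. Expanding the definition of the $R$-valuation, all terms with $w\notin\{u,v\}$ cancel, and
\begin{equation*}
w_R(c)-w_R(c')=2R(u)-R(v)=\ell_R(\pmove{u}{v}).
\end{equation*}
The defining property of a valuating function, applied to the edge $uv$, gives $R(v)\le 2R(u)$. So the loss is nonnegative and the valuation does not increase.

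There is no real obstacle. The only point to check is that the inequality $R(v)\le2R(u)$ is available in the direction the move goes. Edges are unordered, so the definition of a valuating function must be read as holding for both orientations of every edge, i.e.\ both $R(v)\le2R(u)$ and $R(u)\le2R(v)$. That is exactly what is needed for a step $\pmove{u}{v}$ along any edge in either direction, and it is how the loss $\ell_R(\pmove{u}{v})\ge0$ is asserted in Definition~\ref{DF_valuation}.
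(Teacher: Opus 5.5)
Your proof is correct and is essentially the paper's argument: compute $w_R(c)-w_R(c')=2R(u)-R(v)\ge 0$ for a single step $\pmove{u}{v}$ using the valuating condition on the edge $uv$, then chain along the pebbling sequence. Your remark that the condition in Definition~\ref{DF_valuation} must hold for both orientations of each edge is also the right reading.
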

      
      \begin{proof}
      For a single pebbling step $\pmove{u}{v}$ taking $c$ to $c'$, we have
      \begin{equation*}
      w_R(c')-w_R(c)=-2R(u)+R(v)\le 0,
      \end{equation*}
      since $R(v)\le 2R(u)$ whenever $uv\in E(G)$.
      Applying this along a pebbling sequence gives the claim.
      \end{proof}

Therefore, if $c'\in \Pebl{c}{G}$, then
\begin{equation*}
w_R(c')\le w_R(c).
\end{equation*}
In particular, if a configuration $d$ satisfies $w_R(d)>w_R(c)$, then
$d\notin \Pebl{c}{G}$. This simple observation will be the basis of several
lower-bound arguments below.

   \begin{lemma}\label{lm:partition}
      Let $G$ be a graph, and let $c,c'_0,c'_1$ be configurations on $G$ such that
      \begin{equation*}
      \pmapsto{c}{c'_0\oplus c'_1}.
      \end{equation*}
      Then there exist configurations $c_0,c_1$ such that
      \begin{equation*}
      c_0\mapsto c'_0,\qquad c_1\mapsto c'_1,\qquad c=c_0\oplus c_1.
      \end{equation*}
      \end{lemma}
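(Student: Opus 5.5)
We argue by induction on the length $r$ of a pebbling sequence
\begin{equation*}
c=d_0,d_1,\dots,d_r=c'_0\oplus c'_1 .
\end{equation*}
The idea is to pull the splitting of the final configuration back one step at a time. At each stage we know how to split the current configuration, and we want to split the previous one compatibly. For $r=0$ we simply take $c_0=c'_0$ and $c_1=c'_1$; both reachability relations then hold via the empty sequence.

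For the inductive step, apply the induction hypothesis to the sequence $d_1,\dots,d_r$, whose length is $r-1$. This gives configurations $e_0,e_1$ with
\begin{equation*}
d_1=e_0\oplus e_1,\qquad e_0\mapsto c'_0,\qquad e_1\mapsto c'_1 .
\end{equation*}
It then suffices to split $c=d_0$ as $c_0\oplus c_1$ with $c_0\mapsto e_0$ and $c_1\mapsto e_1$, since reachability is transitive. Let the first step be $\pmove{u}{v}$, so that $d_1=\poper{u}{v}{c}$, and hence $c=d_1\ominus e_v\oplus 2e_u$. The added pebble on $v$ is counted in $d_1(v)=e_0(v)+e_1(v)\ge 1$, so one of the two parts, say $e_i$, has $e_i(v)\ge1$.

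We assign the move to that part $e_i$. Put
\begin{equation*}
c_i=e_i\ominus e_v\oplus 2e_u,\qquad c_{1-i}=e_{1-i}.
\end{equation*}
Then $c_i$ is a genuine configuration, because $e_i(v)\ge1$ means subtracting $e_v$ leaves nonnegative values. Next, $c_i(u)\ge 2$, so the step $\pmove{u}{v}$ is applicable to $c_i$, and its result is exactly $e_i$. Hence $c_i\mapsto e_i$, and trivially $c_{1-i}\mapsto e_{1-i}$. Finally, $c_0\oplus c_1=e_0\oplus e_1\ominus e_v\oplus 2e_u=d_1\ominus e_v\oplus 2e_u=c$.

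Composing the reachabilities gives $c_0\mapsto e_0\mapsto c'_0$ and $c_1\mapsto e_1\mapsto c'_1$, which completes the induction. The only subtle point is choosing which part receives the reversed move. This is forced by the nonnegativity of $e_i\ominus e_v$, and it is exactly what the condition $e_i(v)\ge1$ guarantees. Everything else is bookkeeping.
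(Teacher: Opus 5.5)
Your proof is correct, but it takes a different route from the paper's.

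The paper argues forward along the whole sequence. It labels the pebbles of $c$, and the pebble created by a move carries the union of the label-sets of the two pebbles consumed. It then defines $T_0,T_1$ as the labels ending up in the pebbles of $c'_0$ and $c'_1$. A move mixing the two classes would leave a mixed label-set in some final pebble, so no such move exists, and the sequence splits into two interleaved subsequences.

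You instead induct backward on the length of the sequence. You pull an already-known split of $d_1$ back across the first move $\pmove{u}{v}$ by assigning that move to a part with a pebble at $v$. Your three checks are exactly what is needed: $e_i\ominus e_v$ is a configuration, $c_i(u)=e_i(u)+2\ge 2$ (using $u\ne v$), and $c_0\oplus c_1=c$.

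What your version gains is rigor without tracking pebble identities. It avoids the informal notions of labelled pebbles and ``descendants''. It also checks explicitly that the moves assigned to each part are applicable, which the paper's phrase ``the sequence splits into two subsequences'' leaves to the reader. Your free choice of $i$ when $e_0(v),e_1(v)\ge1$ also makes visible the choices the paper leaves implicit: which labelled pebbles are moved, and which final pebbles form $c'_0$.

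The paper's labelling picture, in turn, shows directly that every move of the original sequence belongs to exactly one part. It also shows that the two subsequences, in their original order, witness $c_0\mapsto c'_0$ and $c_1\mapsto c'_1$. Your induction gives the same conclusion once unwound, but does not state it.
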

      
      \begin{proof}
      Fix a pebbling sequence
      \begin{equation*}
      c=c^{(0)}\mapsto c^{(1)}\mapsto\cdots\mapsto c^{(\ell)}=c'_0\oplus c'_1 .
      \end{equation*}
      
      Label the pebbles of $c$ by $1,\dots,\norm{c}$.
      Whenever a pebbling move uses two pebbles, one with label-set $A$ and the other
      with label-set $B$, we assign to the new pebble the label-set $A\cup B$.
      Thus at every stage the label-sets of the pebbles form a partition of
      $\{1,\dots,\norm{c}\}$.
      
      Let $T_0$ be the union of the label-sets of the pebbles in $c'_0$, and
      let $T_1$ be the union of the label-sets of the pebbles in $c'_1$.
      Then $T_0,T_1$ form a partition of $\{1,\dots,\norm{c}\}$.
      Let $c_i$ be the subconfiguration of $c$ consisting of pebbles whose labels lie
      in $T_i$. Then $c=c_0\oplus c_1$.
      
      If some move used pebbles whose label-sets intersect both $T_0$ and $T_1$,
      the resulting pebble would contain labels from both sets, and so would all its
      descendants, contradicting the fact that the final pebbles belong either to
      $c'_0$ or to $c'_1$.
Hence, every move uses pebbles from only one of the two parts, and the sequence
      splits into two subsequences yielding
      $c_0\mapsto c'_0$ and $c_1\mapsto c'_1$.
      \end{proof}
\medskip
\section{The Independence Number}

We next derive a general lower bound on $\stack{G}$ in terms of the independence
number $\alpha(G)$.

\begin{theorem}\label{tm:alpha-2}
If $G$ is a finite connected graph and $G\neq K_2$, then
\begin{equation*}
\stack{G}\ge 3\cdot \alpha(G)+1.
\end{equation*}
\end{theorem}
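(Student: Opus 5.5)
The plan is to exhibit, for every connected $G\neq K_2$, a configuration of size $3\alpha(G)$ that is not stackable. Since $\stack{G}$ is the least $t$ for which all configurations in $\conf{G}{t}$ are stackable, such a configuration shows that $\stack{G}\ne 3\alpha(G)$. To get the full bound $\stack{G}\ge 3\alpha(G)+1$, one either checks that the construction extends to every size below $3\alpha(G)$, or reads the definition monotonically; I would state the needed remark explicitly. I split into two cases according to whether $\alpha(G)=1$.

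\emph{Case $\alpha(G)=1$.} Then $G=K_n$, and $n\ge3$ because $G\neq K_2$ (the degenerate $K_1$ would have to be set aside separately). Place one pebble on each of three distinct vertices. No move is applicable and the support has size $3$, so this configuration of size $3=3\alpha(G)$ is not stackable. Small sizes $t=2$ are handled the same way, with two single pebbles.

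\emph{Case $\alpha(G)\ge2$.} Fix a maximum independent set $I$ and let $c=3\cdot\sum_{u\in I}e_u$. Suppose $c\mapsto c'$ with $c'$ stacked at some $w$ and $\norm{c'}=k\ge1$. I would first run the linear bookkeeping. For each vertex $x$, write $\mathrm{in}(x)$ and $\mathrm{out}(x)$ for the number of moves into and out of $x$. Then
\begin{equation*}
3+\mathrm{in}(u)=2\,\mathrm{out}(u)\quad (u\in I\setminus\{w\}),
\qquad
\mathrm{in}(x)=2\,\mathrm{out}(x)\quad (x\notin I\cup\{w\}).
\end{equation*}
In particular, every $u\in I\setminus\{w\}$ must receive an odd number of pebbles, and by independence of $I$ all of these come from vertices outside $I$. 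These equations alone are consistent, so they will not close the argument. The real work is a structural argument on the pebbling sequence, for which I would use Lemma~\ref{lm:partition} and the labelling of pebbles from its proof.

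\emph{Structural argument (main obstacle).} Each pebble arriving at some $u\in I\setminus\{w\}$ is produced at a non-$I$ neighbour $x$ from two pebbles. I would trace label-sets: the pebble finally at $w$ carries label-sets covering all $3\alpha$ original labels, while each original group of three pebbles at $u$ has size odd. The idea is to show by induction along the sequence that every pebble ever sitting on a vertex outside $I$ has a label-set whose size is congruent to $2$ modulo $3$ in a suitable weighted sense. Pebbles on vertices of $I$ would then have label-sets of size $\equiv 1$ or $0$, so that a stack of size $k$ at a single vertex is incompatible with total $3\alpha$ when $\alpha\ge2$. Concretely, I would first try a mod-$3$ invariant combining $\imb$-type signs on $I$ versus $V\setminus I$ with the counts above, in the spirit of Lemma~\ref{lm:imbalance}.

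If that fails, the fallback is to choose $w$ and take a shortest counterexample sequence. The aim is then to show that the first vertex $u\in I\setminus\{w\}$ that becomes empty does so only by spending pebbles it received, which forces an earlier empty vertex of $I$; this contradicts minimality. Sanity checks are $P_3$, where the bound $7$ matches $\stack{P_3}=2^3-1$, and $K_{m,n}$, where it matches Theorem~\ref{tm:complete-bipartite}; these suggest the construction is the right one. I expect this invariant or minimality step to be the genuine difficulty of the proof.
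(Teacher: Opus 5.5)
\textbf{There is a genuine gap.} Your witness $c=3\sum_{u\in I}e_u$ is exactly the paper's, and your treatment of $\alpha(G)=1$ is fine. But the core claim, that $c$ is not stackable when $\alpha(G)\ge2$, is never proved. You note yourself that the in/out equations do not close the argument, and neither replacement you offer works.

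The label-set invariant ``size $\equiv2\pmod3$ outside $I$'' is false as stated. Two label-size-$2$ pebbles moved along an edge between two vertices outside $I$ produce a pebble of label-size $4\equiv1$.

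More fundamentally, no argument that uses the total $3\alpha(G)$ only through its residue mod $3$ can succeed, because the obstruction depends on size, not on a congruence. On $P_3$, the configuration $(3,0,3)$ is not stackable. Yet $(6,0,3)$, which has the same residue at every vertex, is stackable:
\[
(6,0,3)\mapsto(0,3,3)\mapsto(0,4,1)\mapsto(0,2,2)\mapsto(0,3,0).
\]
For non-bipartite $G$ there is not even an imbalance-type invariant to start from. The minimality ``fallback'' is too vague to check.

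\textbf{The missing idea is a valuation.} Put $R=1$ on $I$ and $R=2$ off $I$. This is a valuating function, since $R$ takes values in $\{1,2\}$.
\begin{itemize}
\item Because $I$ is independent, every move into $I$ starts outside $I$ and lowers $w_R$ by exactly $2\cdot2-1=3$.
\item No move raises $w_R$ (Lemma~\ref{lm:decre-valuation}).
\item We have $w_R(c)=3\alpha(G)$, and the final configuration has $w_R\ge1$. Hence at most $\alpha(G)-1$ moves enter $I$.
\end{itemize}
Your own bookkeeping now finishes the proof. Each $u\in I\setminus\{w\}$ has $\mathrm{in}(u)$ odd, hence $\mathrm{in}(u)\ge1$. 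This forces $w\in I$ and $\mathrm{in}(w)=0$. But the sequence is nonempty, since $c$ is not stacked, and its last move lands on a vertex of $\supp{c'}=\{w\}$. That is a contradiction. The paper phrases the same step as follows: some $x\in I$ never receives a pebble, so it ends with $1$ or $3$ pebbles, while the last move lands elsewhere.

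\textbf{Monotonicity.} Your concern is settled in one line. If every configuration of some size $t\ge2$ is stackable, then $t>|V(G)|$, since otherwise $t$ singletons form a non-stackable configuration. So every non-stacked configuration of size $t+1$ has a vertex with two pebbles, hence a legal move to a configuration of size $t$. Thus a single non-stackable configuration of size $N\ge2$ already gives $\stack{G}\ge N+1$.

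\textbf{The case $K_1$.} You are right that $K_1$ must be excluded, since $\stack{K_1}=2<4$. The paper's proof overlooks this case.
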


\begin{proof}
If $G$ is complete and $G\neq K_2$, then $|V(G)|\ge 3$ and $\alpha(G)=1$, so
\begin{equation*}
\stack{G}\ge |V(G)|+1\ge 4=3\cdot \alpha(G)+1.
\end{equation*}
Thus, we may assume that $G$ is not complete.

Let $k=\alpha(G)$, and fix an independent set $X\subseteq V(G)$ with $|X|=k$.
Then $k\ge 2$.

Define a valuation $R:V(G)\to\{1,2\}$ by
\begin{equation*}
R(w)=
\begin{cases}
1 & \text{if } w\in X,\\
2 & \text{if } w\notin X.
\end{cases}
\end{equation*}
Recall that $w_R(d)$ denotes the $R$-valuation of the configuration $d$.

Now define a configuration $c$ by
\begin{equation*}
c(x)=3 \ \ (x\in X),\qquad c(v)=0 \ \ (v\in V(G)\setminus X).
\end{equation*}
Then
\begin{equation*}
\norm{c}=3k
\qquad\text{and}\qquad
w_R(c)=3k.
\end{equation*}

We claim that $c$ is not stackable. Suppose, toward a contradiction, that
$c'\in\Pebl{c}{G}$ is stacked. Choose a pebbling sequence
\begin{equation*}
(c_0,\dots,c_r)
\end{equation*}
from $c_0=c$ to $c_r=c'$, and write
\begin{equation*}
c_{i+1}=\poper{u_i}{v_i}{c_i}
\qquad (i=0,\dots,r-1).
\end{equation*}

Let
\begin{equation*}
I=\{\,i<r: v_i\in X\,\}.
\end{equation*}
Since $X$ is independent, every move counted in $I$ comes from a vertex outside
$X$. Hence, for every $i\in I$,
\begin{equation*}
w_R(c_i)-w_R(c_{i+1})=2R(u_i)-R(v_i)=2\cdot 2-1=3.
\end{equation*}
Since $c'$ is nonempty, we have $w_R(c')\ge 1$, and therefore
\begin{equation*}
3\cdot|I|
=\sum_{i\in I}\bigl(w_R(c_i)-w_R(c_{i+1})\bigr)
\le w_R(c)-w_R(c')
\le 3k-1.
\end{equation*}
Thus, $|I|<k$.

Since $|X|=k$, there is a vertex $x\in X$ such that no move places a pebble onto
$x$. As $c(x)=3$, the only possible moves involving $x$ are moves out of $x$, and
at most one such move can occur. Hence,
\begin{equation*}
c'(x)\in\{1,3\}.
\end{equation*}
In particular, $c'(x)>0$.

Because $k\ge 2$, the initial configuration $c$ is not stacked, so the pebbling
sequence contains at least one move. Let $\pmove{u_{r-1}}{v_{r-1}}$ be the last
move. Then $c'(v_{r-1})>0$. Since $v_{r-1}\neq x$, the final configuration has a
positive pebble count at a vertex distinct from $x$. But also $c'(x)>0$, so $c'$
is not stacked, a contradiction.

Therefore, $c$ is not stackable, and so
\begin{equation*}
\stack{G}\ge 3k+1=3\cdot \alpha(G)+1.
\end{equation*}
\end{proof}

\section{The Odd Invariant}

Let $G$ be a connected non-bipartite graph. For each vertex $v\in V(G)$, let
$\operatorname{odd}(v,G)$ denote the length of a shortest odd closed walk from
$v$ to itself. We then define
\begin{equation}\label{eq:odd_def}
\operatorname{odd}(G)=\max\{\operatorname{odd}(v,G): v\in V(G)\}.
\end{equation}
Thus, $\operatorname{odd}(G)$ measures how far a vertex can be from the nearest
odd obstruction. The odd girth $\og(G)$ is the corresponding minimum, namely
\begin{equation}\label{eq:og_def}
\og(G)=\min\{\operatorname{odd}(v,G): v\in V(G)\}.
\end{equation}
In particular,
\begin{equation*}
\operatorname{odd}(G)\ge \og(G).
\end{equation*}

The point of the invariant $\operatorname{odd}(G)$ is that it yields a general
lower bound for $\clear{G}$ that is stronger than the corresponding bound in
terms of odd girth.

\begin{theorem}\label{tm:pebblingodd}
If $G$ is a non-bipartite, connected, finite graph, then
\begin{equation*}
\clear{G}\ge 3\cdot 2^{\frac{\operatorname{odd}(G)-1}{2}}-2 .
\end{equation*}
\end{theorem}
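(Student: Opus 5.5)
Set $k=\frac{\operatorname{odd}(G)-1}{2}$ and fix a vertex $v$ with $\operatorname{odd}(v,G)=2k+1$. The plan is to exhibit a configuration of size $3\cdot 2^k-3$ that is not clearable. The natural candidate is $c=(3\cdot 2^k-3)\,e_v$, i.e.\ everything stacked at $v$.

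\textbf{Structure near $v$.} Let $L_i=\{u: d(v,u)=i\}$. Suppose an edge joins two vertices of $L_i$. Together with shortest paths to $v$ it gives a closed walk through $v$ of length $2i+1$. Hence such an edge exists only if $i\ge k$. Edges never join $L_i$ and $L_j$ with $|i-j|\ge2$.

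Let $B$ be the graph on $L_0\cup\dots\cup L_k$ containing all edges of $G$ in this set except those inside $L_k$. Then $B$ is bipartite, with classes given by the parity of $d(v,\cdot)$. By Lemma~\ref{lm:imbalance}, any pebbling sequence using only edges of $B$ preserves $\imb$ modulo~$3$. Initially $\imb(c)=3\cdot 2^k-3\equiv 0$, whereas a single pebble has imbalance $\pm1$. So a clearing sequence must use some move outside $B$. Every such move starts at a vertex of $L_k\cup L_{k+1}\cup\cdots$. In particular, at some stage two pebbles sit on one vertex at distance $\ge k$ from $v$.

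\textbf{Counting cost.} Now take a clearing sequence, take the first move $\pmove{a}{b}$ not in $B$, and let $c^*$ be the configuration just before it. Write $c^*=2e_a\oplus r$. By Lemma~\ref{lm:partition}, $c=c_0\oplus c_1$ with $c_0\mapsto 2e_a$ and $c_1\mapsto r$, and both processes use only $B$-edges. Since $d(v,a)\ge k$, the valuation $R(u)=2^{-d(v,u)}$ forces $\norm{c_0}\ge 2^{k+1}$. Hence $\norm{c_1}\le 2^k-3$.

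The remaining task is to show that the small remainder cannot repair the imbalance defect, even allowing later excursions outside $B$. A single far pebble carries valuation only $2^{-k}$. The plan is to iterate this argument, or alternatively to track a combined potential $\Psi=\imb\bmod 3$ plus an $R$-weighted correction. The aim is to show that each excursion costs at least $2^{k+1}$ from $v$, that one excursion leaves a total imbalance change that is a multiple of $3$, and that a second excursion cannot be afforded because $2\cdot 2^{k+1}>3\cdot2^k-3$.

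\textbf{Main obstacle.} The key point is that the move $\pmove{a}{b}$ either stays inside $L_k$ or goes outward. The single pebble it produces can then only return toward $v$ through $B$-edges, shrinking by half per step. I first need to check that the bipartite class assigned to $a$ and $b$ (both parity $k$, or parity $k,k+1$) makes the net imbalance contribution of the $c_0$-part combined with its descendants congruent to what $c_0$ itself contributed, up to a multiple of $3$.

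If this fails, I fall back to a finer invariant. I would define $\imb'$ on the whole configuration using the layer parities, and show that every move changes it by a multiple of $3$ except moves inside $L_k$ or beyond. Such exceptional moves change it by $\pm1$ or $\pm2$. I would then use the weight bound to prove that, with fewer than $3\cdot 2^k-2$ pebbles, the total of these exceptional changes cannot equal $\pm1$ modulo $3$ while the final configuration is a single pebble. Finally, since $\og(G)\le\operatorname{odd}(G)$, the odd-girth bound in the paper follows immediately.
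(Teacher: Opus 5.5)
Your setup is sound and parallels the paper's: layers around $v$, bipartiteness of $B$, imbalance modulo $3$ forcing a move outside $B$, and Lemma~\ref{lm:partition} applied at the first such move $\pmove{a}{b}$. There is one slip in it. The valuation $R(u)=2^{-d(v,u)}$ points the wrong way and only yields the trivial bound $\norm{c_0}\ge 2^{1-k}$. You need $R(u)=2^{d(v,u)}$. This is still a valuating function, because adjacent vertices have distances from $v$ differing by at most $1$. It gives $\norm{c_0}\ge 2\cdot 2^{d(v,a)}\ge 2^{k+1}$, hence $\norm{c_1}\le 2^k-3$.

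The genuine gap comes after that: you never show that clearing is impossible from this point. Everything after ``The remaining task'' is a list of plans, and they do not work as stated. A move along an edge inside $L_j$ changes the layer-parity imbalance by $\mp1$, so ``one excursion leaves a total imbalance change that is a multiple of $3$'' is false in general. Also, a later excursion need not cost a fresh $2^{k+1}$, since it can reuse the pebble left at $b$ by the first.

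The missing idea is that $\norm{c_1}\le 2^k-3$ already finishes the proof; no analysis of further excursions and no finer invariant is needed. Use the labelling from the proof of Lemma~\ref{lm:partition}, with $c_0$ the labels carried by the two pebbles at $a$.
\begin{enumerate}[(1)]
\item Right after $\pmove{a}{b}$, all labels of $c_0$ sit on one pebble at $b$, and $d(v,b)\ge k$.
\item This pebble can move or merge only if some other pebble is at $b$ at some time. That pebble's labels lie in $c_1$, so a subconfiguration of $c_1$ reaches $e_b$. The valuation $2^{d(v,\cdot)}$ then forces $\norm{c_1}\ge 2^k$, a contradiction.
\item So the pebble at $b$ never moves, and it is the final pebble. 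The final pebble carries every label, so $c_1$ is empty.
\item Hence $c\mapsto 2e_a$ using only edges of $B$. Imbalance invariance on $B$ gives $0\equiv\imb(c)\equiv\imb(2e_a)=\pm2\pmod 3$, a contradiction.
\end{enumerate}
The paper closes the same gap differently. It uses the capped valuation $2^{\min(d(v_0,u),k)}$ and finds, around the first move inside $U_k$, three pebbles at distance at least $k$ with disjoint origins. This forces $m\ge 3\cdot 2^k$.
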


\begin{proof}
Let $\operatorname{odd}(G)=2k+1$, and choose a vertex $v_0\in V(G)$ such that
\begin{equation*}
\operatorname{odd}(v_0,G)=2k+1.
\end{equation*}
It is enough to show the following Claim. 

\medskip
\noindent
\textbf{Claim.}
Let $m$ be a positive integer divisible by $3$, and let $c$ be the stacked
configuration at $v_0$ with $\norm{c}=m$. If $m<3\cdot 2^k$, then $c$ is not
clearable.

\medskip
\noindent
\textit{Proof of the claim.}
Assume, toward a contradiction, that $c$ is clearable, and choose
$c'\in\Pebl{c}{G}$ with $\norm{c'}=1$. Let $v^*$ be the vertex such that
$c'(v^*)=1$.

Define
\begin{equation*}
U_\ell=\{v\in V(G): d(v_0,v)=\ell\}\qquad (\ell<k),
\qquad
U_k=\{v\in V(G): d(v_0,v)\ge k\},
\end{equation*}
and let
\begin{equation*}
R(v)=2^\ell \qquad (v\in U_\ell,\ \ell\le k).
\end{equation*}
Then
\begin{equation*}
w_R(c)=m.
\end{equation*}

We first note that there is no edge inside $U_\ell$ for any $\ell<k$. Indeed,
if $u,v\in U_\ell$ with $\ell<k$ and $uv\in E(G)$, then two shortest paths from
$v_0$ to $u$ and $v$, together with the edge $uv$, form an odd closed walk
through $v_0$ of length at most $2\ell+1<2k+1$, contradicting the choice of
$v_0$.

Let
\begin{equation*}
c_0,\dots,c_M
\end{equation*}
be a pebbling sequence from $c$ to $c'$, where
\begin{equation*}
c_{i+1}=\poper{u_i}{v_i}{c_i}\qquad (i<M).
\end{equation*}
Let $e_i=u_iv_i$ for $i<M$, and let $F$ be the graph spanned by the edges
$e_i$.

Since $c'$ consists of a single pebble, we have
\begin{equation*}
v_{M-1}=v^*.
\end{equation*}

\medskip
\noindent
\textbf{Subclaim 1.}
The graph $F$ is not bipartite.

\medskip
\noindent
\textit{Proof.}
If $F$ were bipartite, then by \nameref{lm:imbalance} Lemma~\ref{lm:imbalance}, since
$m$ is divisible by $3$,
\begin{equation*}
\imb(c')\equiv \imb(c)\equiv 0 \pmod 3.
\end{equation*}
But $\norm{c'}=1$, so $\imb(c')\not\equiv 0\pmod 3$, a contradiction.

\medskip
Therefore, there is some index $s<M$ such that
\begin{equation*}
u_s,v_s\in U_k.
\end{equation*}
Fix the smallest such index $s$.

\medskip
\noindent
\textbf{Subclaim 2.}
There exists $r\ne s$ such that $v_r=v_s$.

\medskip
\noindent
\textit{Proof.}
Assume the contrary. Then after the move $\pmove{u_s}{v_s}$, the vertex $v_s$
contains exactly one pebble, and since no later move ends at $v_s$, this pebble
remains there for the rest of the sequence. Hence, $v_s=v^*$ and $s=M-1$.

Thus, $F$ contains exactly one edge inside $U_k$, namely $e_{M-1}$, and the
vertex $v_{M-1}$ is not incident with any other edge of $F$. All other edges go
between consecutive layers
\begin{equation*}
U_0,U_1,\dots,U_k.
\end{equation*}
Hence, $F$ is bipartite, contradicting Subclaim~1.

\medskip
Fix the smallest $r\ne s$ such that  $v_r=v_s$. 
Let $t$ be  the minimal index  such that $u_t=v_s$.

Then $s,r<t$.  We can delay the move $\pmove{u_s}{v_s}$ until the move $\pmove{u_t}{v_t}$, because $v_s$ is not used as a source of any move between times $s$ and $t$. 
After this delay, the move $\pmove{u_s}{v_s}$ is still applicable at time $t-1$, and the resulting configuration is the same as if we had performed $\pmove{u_s}{v_s}$ at time $s$ and 
then delayed $\pmove{u_t}{v_t}$ until time $t$. 

Hence, we obtain $t=s+1$ and so $r<s$. 

Since the move $\pmove{u_s}{v_s}$ is
applicable at time $s$, the configuration $c_s$ contains at least two pebbles at
$u_s$. Applying Lemma~\ref{lm:partition} repeatedly to the segment from $c$ to
$c_s$, we obtain configurations
\begin{equation*}
c=d_1\oplus d_2\oplus d_3\oplus d_4
\end{equation*}
such that
\begin{equation*}
d_1\mapsto e_{u_s},\qquad
d_2\mapsto e_{u_s},\qquad
d_3\mapsto e_{v_s}.
\end{equation*}

Since $R$ is a valuation, pebbling does not increase $w_R$. Therefore,
\begin{equation*}
w_R(d_1)\ge w_R(e_{u_s})=2^k,\qquad
w_R(d_2)\ge w_R(e_{u_s})=2^k,\qquad
w_R(d_3)\ge w_R(e_{v_s})=2^k.
\end{equation*}
Hence,
\begin{equation*}
m=w_R(c)
=w_R(d_1)+w_R(d_2)+w_R(d_3)+w_R(d_4)
\ge 2^k+2^k+2^k
=3\cdot 2^k,
\end{equation*}
contradicting $m<3\cdot 2^k$. This proves Subclaim~2.

By the claim, a configuration of size $3\cdot 2^k-3$ stacked  at $v_0$ is not
clearable. Therefore,
\begin{equation*}
\clear{G}\ge (3\cdot 2^k-3)+1=3\cdot 2^k-2
=3\cdot 2^{\frac{\operatorname{odd}(G)-1}{2}}-2.
\end{equation*}
\end{proof}

Since $\og(G)\le \operatorname{odd}(G)$, we immediately obtain the following
weaker bound in terms of odd girth.

\begin{corollary}\label{tm:pebblingoddgirth}
If $G$ is a non-bipartite, connected, finite graph, then
\begin{equation*}
\clear{G}\ge 3\cdot 2^{\frac{\og(G)-1}{2}}-2 .
\end{equation*}
\end{corollary}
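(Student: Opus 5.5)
This follows directly from Theorem~\ref{tm:pebblingodd}; no new pebbling argument is needed. The plan is to compare $\og(G)$ with $\operatorname{odd}(G)$ and then use that the right-hand side is monotone.

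First I would note that $\og(G)$ and $\operatorname{odd}(G)$ are well defined and odd. Since $G$ is connected and non-bipartite, it has an odd cycle $C$. For any vertex $v$, take a shortest path from $v$ to $C$, go around $C$, and come back along the same path. This is an odd closed walk through $v$, so $\operatorname{odd}(v,G)$ exists and is odd for every $v$. By definitions \eqref{eq:og_def} and \eqref{eq:odd_def}, $\og(G)$ is the minimum of these values and $\operatorname{odd}(G)$ is the maximum. Hence $\og(G)\le\operatorname{odd}(G)$, as the paper already observed, and both exponents $\frac{\og(G)-1}{2}$ and $\frac{\operatorname{odd}(G)-1}{2}$ are nonnegative integers.

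Next I would use that $x\mapsto 3\cdot 2^{x}-2$ is increasing. This gives
\[
3\cdot 2^{\frac{\og(G)-1}{2}}-2\le 3\cdot 2^{\frac{\operatorname{odd}(G)-1}{2}}-2\le \clear{G},
\]
where the last inequality is Theorem~\ref{tm:pebblingodd}. That inequality is exactly the claim.

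No step here is a real obstacle; all the substance is in Theorem~\ref{tm:pebblingodd}. The one point to check is that the hypotheses of that theorem (finite, connected, non-bipartite) are the same as those of the corollary, and they are.
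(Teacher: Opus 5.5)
Your proof is correct and is exactly the paper's argument: the paper derives the corollary in one line from Theorem~\ref{tm:pebblingodd}, using $\og(G)\le\operatorname{odd}(G)$ together with the monotonicity of $x\mapsto 3\cdot 2^{x}-2$. You also check that every $\operatorname{odd}(v,G)$ exists and is odd, which the paper leaves implicit.
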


Applying this to odd cycles yields the following.

\begin{corollary}\label{cr:c_2n+1}
For each natural number $n$,
\begin{equation*}
\clear{C_{2n+1}}\ge 3\cdot 2^n-2.
\end{equation*}
\end{corollary}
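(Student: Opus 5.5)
This is a direct specialization of Corollary~\ref{tm:pebblingoddgirth} (equivalently Theorem~\ref{tm:pebblingodd}) to $G=C_{2n+1}$. The plan is to check the hypotheses and compute the odd invariant, using the convention that $n\ge1$ so that $C_{2n+1}$ is a genuine cycle.

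\textbf{Hypotheses.} The graph $C_{2n+1}$ is finite and connected. It is non-bipartite because it is itself an odd cycle. Hence $\clear{C_{2n+1}}$ is defined by Theorem~\ref{tm:stack-clear-def}, and Corollary~\ref{tm:pebblingoddgirth} applies.

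\textbf{Computing $\og(C_{2n+1})=2n+1$.}
\begin{itemize}
\item \emph{Upper bound.} Traversing the cycle once from any vertex gives an odd closed walk of length $2n+1$.
\item \emph{Lower bound.} Any closed walk of odd length $L$ in $C_{2n+1}$ has winding number $w$ around the cycle. Since the net displacement is $w(2n+1)$ and $L\equiv w(2n+1)\pmod 2$, oddness of $L$ forces $w$ to be odd. Hence $w\neq0$, and the walk must traverse every edge, so $L\ge 2n+1$.
\end{itemize}
Alternatively, a shortest odd closed walk always contains an odd cycle, and the only cycle of $C_{2n+1}$ is the whole graph.

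\textbf{Conclusion.} Since $\og(C_{2n+1})=2n+1$, Corollary~\ref{tm:pebblingoddgirth} gives
\[
\clear{C_{2n+1}}\ge 3\cdot 2^{\frac{(2n+1)-1}{2}}-2=3\cdot 2^n-2.
\]
By vertex-transitivity we also have $\operatorname{odd}(C_{2n+1})=\og(C_{2n+1})$, so Theorem~\ref{tm:pebblingodd} yields the same bound.

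The only step needing any care is the lower bound on the length of an odd closed walk, and the parity/winding argument above handles it.
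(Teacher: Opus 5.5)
Your proposal is correct and follows the paper's route: the paper also obtains this by specializing Corollary~\ref{tm:pebblingoddgirth} to $G=C_{2n+1}$, using $\og(C_{2n+1})=2n+1$. Your winding-number check of the odd girth simply spells out the detail the paper leaves implicit.
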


\begin{remark}
The invariant $\operatorname{odd}(v,G)$ is easily computable. Indeed, define a
graph $G'$ with vertex set $V(G)\times 2$ by declaring $(u,i)$ and $(v,j)$
adjacent whenever $i\ne j$ and $uv\in E(G)$. Then
\begin{equation*}
\operatorname{odd}(v,G)=d_{G'}((v,0),(v,1))
\end{equation*}
for every $v\in V(G)$. 
Hence, $\operatorname{odd}(G)$ and $\og(G)$ can be computed in polynomial time
by using the equation \eqref{eq:odd_def} and \eqref{eq:og_def}, respectively, together with a shortest-path algorithm on $G'$.
\end{remark}

\section{Stacking Number of Paths}

In this section we determine the stacking number of paths. Whenever $P_n$
denotes a path, we write
\begin{displaymath}
V(P_n)=\{v_1,\dots,v_n\}
\end{displaymath}
in natural order. We begin with a simple homomorphism lemma, then investigate a
family of extremal configurations yielding the lower bound, derive the exact
value of $\stack{P_n}$, and conclude with an application to even cycles.

Assume that ${\varphi}:V[G_0]\twoheadrightarrow V[G_1]$ is a homomorphism from the graph $G_0$ onto the graph $G_1$.
For $c\in \conf{G_0}{t}$ define ${\varphi}(c)\in \conf{G_1}{t}$ by
\begin{equation*}
{\varphi}(c)(w)=\sum\{c(v):{\varphi}(v)=w\}.
\end{equation*}

The following lemma is straightforward.

\begin{homolemma}
   \label{lm:homomorphism}
Assume that $G_0$ and $G_1$ are graphs, and ${\varphi}:V[G_0]\twoheadrightarrow V[G_1]$ is a homomorphism from $G_0$ onto $G_1$.
\begin{enumerate}[(1)]
\item $G_0: c\longmapsto c'$ implies $G_1: {\varphi}(c)\longmapsto {\varphi}(c')$.  
\item $\stack{G_0}\le \stack{G_1}$ and $\clear{G_0}\le\clear{G_1}$.
\end{enumerate}
\end{homolemma}

\begin{definition}\label{df:path-conf}
Let $n,m\in\mathbb{N}$, and let $P_n$ have vertices $v_1,\dots,v_n$ in natural
order. We denote by $\econf{m}{n}$ the configuration on $P_n$ with $m$ pebbles
on $v_1$, one pebble on $v_n$, and no pebbles on the remaining vertices.
Equivalently,
\begin{equation*}
\econf{m}{n}=m\cdot e_{v_1}\oplus e_{v_n}.
\end{equation*}
\end{definition}

\begin{theorem}\label{tm:path-lower}
Let $2\le n$ and let $m<2^n$ with $m\equiv 2^n\pmod 3$.
Then the configuration $\econf{m}{n}$ is not stackable.
\end{theorem}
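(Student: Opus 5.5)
The plan has three steps: use the imbalance invariant to force any stacked target to have at least three pebbles, use the valuation method to exclude a stack at $v_n$, and use Lemma~\ref{lm:partition} to exclude a stack at any $v_j$ with $j<n$.

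\emph{Step 1 (imbalance).} Put $v_1$ in $V_0$ and use the imbalance from Definition~\ref{def:imb}. Then $v_n\in V_0$ exactly when $n$ is odd, so
\begin{equation*}
\imb(\econf{m}{n})=m+(-1)^{n-1}.
\end{equation*}
Since $2^n\equiv 2\pmod 3$ for $n$ odd and $2^n\equiv 1\pmod 3$ for $n$ even, the hypothesis $m\equiv 2^n\pmod 3$ gives $\imb(\econf{m}{n})\equiv 0\pmod 3$ in both cases. A stack of $k$ pebbles has imbalance $\pm k$. So by Lemma~\ref{lm:imbalance}, any stacked configuration reachable from $\econf{m}{n}$ has size $k\equiv 0\pmod 3$, and hence $k\ge 3$, since it is nonempty.

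\emph{Step 2 (no stack at $v_n$).} Take the valuating function $R(v_i)=2^{i-1}$. Then
\begin{equation*}
w_R(\econf{m}{n})=m+2^{n-1}<3\cdot 2^{n-1},
\end{equation*}
while a stack of at least three pebbles at $v_n$ has $R$-weight at least $3\cdot 2^{n-1}$. Lemma~\ref{lm:decre-valuation} therefore rules out a stack at $v_n$.

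\emph{Step 3 (no stack at $v_j$, $j<n$).} Suppose the final stack is at $v_j$ with $j<n$. Then the original pebble on $v_n$ must leave $v_n$. Before the first move into $v_n$, the vertex $v_n$ holds exactly one pebble and no move can leave it. So some pebble produced only from the $m$ pebbles on $v_1$ must first reach $v_n$.

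Split the final stack into three single pebbles at $v_j$ (plus a remainder). Applying Lemma~\ref{lm:partition} repeatedly gives
\begin{equation*}
\econf{m}{n}=d_1\oplus d_2\oplus d_3\oplus d_4
\end{equation*}
with $d_i\mapsto e_{v_j}$ for $i=1,2,3$. Exactly one part, say $d_3$, contains the pebble on $v_n$; the parts $d_1,d_2$ are supported on $v_1$. Since $2^{|i-j|}$ is a valuating function, $d_1$ and $d_2$ each contain at least $2^{j-1}$ pebbles.

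For $d_3=a\cdot e_{v_1}\oplus e_{v_n}$ with $d_3\mapsto e_{v_j}$, the aim is the bound
\begin{equation*}
a\ge (2^{n-j}-1)\,2^{n-1}.
\end{equation*}
The intuition is that producing one pebble at $v_j$ from the right end needs $2^{n-j}$ ``units'' at $v_n$. Only one unit is supplied for free, and each additional unit costs $2^{n-1}$ pebbles from $v_1$. Granting this bound,
\begin{equation*}
m\ge (2^{n-j}-1)\,2^{n-1}+2\cdot 2^{j-1}.
\end{equation*}
For $j=n-1$ the right-hand side is $2^{n-1}+2^{n-1}=2^n$. For $j\le n-2$ it is $(2^{n-j}-1)2^{n-1}+2^j\ge 3\cdot 2^{n-1}>2^n$. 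Both contradict $m<2^n$.

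\emph{Main obstacle.} The hard step is the lower bound on $a$. The naive valuation $2^{i-1}$ fails here, because it credits the lone pebble on $v_n$ with weight $2^{n-1}$, although that pebble is useless without a partner.

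First attempt: apply Lemma~\ref{lm:partition} once more inside $d_3$. Split at the first move out of $v_n$, which consumes the original pebble together with a pebble derived solely from $v_1$-pebbles; by the same lemma that pebble comes from a sub-part $a'\cdot e_{v_1}$ with $a'\ge 2^{n-1}$. Then argue that the pebble this move places on $v_{n-1}$ must be combined with further material derived from $v_1$, needing $2^{n-j}-1$ more such pieces in total. An induction on $n-j$ should formalize this.

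Fallback: do induction on $n$. Collapse the path via the Homomorphism Lemma~\ref{lm:homomorphism}, folding $v_n$ onto $v_{n-2}$, to relate stackability of $\econf{m}{n}$ to configurations on $P_{n-1}$.
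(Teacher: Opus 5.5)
\textbf{There is a genuine gap in Step~3.} Your Steps~1 and~2 are correct, but the bound $a\ge(2^{n-j}-1)\,2^{n-1}$ that Step~3 relies on is false. It already fails for $n=3$, $j=1$:
\begin{equation*}
(6,0,1)\mapsto(4,1,1)\mapsto(2,2,1)\mapsto(2,0,2)\mapsto(2,1,0)\mapsto(0,2,0)\mapsto(1,0,0),
\end{equation*}
so $6e_{v_1}\oplus e_{v_3}\mapsto e_{v_1}$ with $a=6<12$. The heuristic that you need $2^{n-j}$ units at $v_n$ is wrong. Once the lone pebble leaves $v_n$, its descendant can be paired at each intermediate vertex $v_i$ with a partner built from $v_1$, at cost $2^{i-1}$ rather than $2^{n-1}$. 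So neither your induction on $n-j$ nor your ``first attempt'' can prove the bound as stated.

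Your fallback names the paper's route but supplies none of its content. The paper inducts on $n$ and bounds the right endpoint by two pebbles using the valuation. It splits $m=m_0+m_1$ at the first move into the endpoint, and uses the valuation and imbalance to get $m_0\ge2^{n-1}$ and $m_0\equiv2^{n-1}\pmod 3$. Hence $m_1<2^{n-1}$ and $m_1\equiv2^{n-1}\pmod3$, and folding the path lets it apply the induction hypothesis to $c^{m_1}_{n-1}$.

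\textbf{How to repair your direct strategy.} The correct bound $a\ge 2^n-2^j$, which the example shows is sharp, already suffices.
\begin{enumerate}[(1)]
\item Use the labelling from the proof of Lemma~\ref{lm:partition} and follow the lineage of the original $v_n$-pebble to its final pebble at $v_j$.
\item The lineage moves by $\pm1$, so for each $i$ with $j<i\le n$ it leaves $v_i$ at least once.
\item Each such departure merges it with a partner pebble at $v_i$ whose labels are all $v_1$-pebbles. These partners have pairwise disjoint label sets.
\item The valuation $R(v_i)=2^{i-1}$ gives each partner at least $2^{i-1}$ labels, so $a\ge\sum_{i=j+1}^{n}2^{i-1}=2^n-2^j$.
\item Then $m\ge 2^{j-1}+2^{j-1}+2^n-2^j=2^n$, contradicting $m<2^n$.
\end{enumerate}

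Two smaller fixes are also needed. First, choose the three singled-out final pebbles so that one of them carries the $v_n$-label; otherwise that label may sit in $d_4$. Second, get $\norm{d_1},\norm{d_2}\ge2^{j-1}$ from $R(v_i)=2^{i-1}$, not from $2^{|i-j|}$, which only yields $\ge1$. With these changes you obtain a direct, non-inductive proof that avoids the Homomorphism Lemma.
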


\begin{proof}
We argue by induction on $n$.

\medskip
\noindent\textbf{Base case.}
If $n=2$, then $m<4$ and $m\equiv 4\pmod 3$, so $m=1$. Hence,
\begin{equation*}
\econf{1}{2}=(1,1),
\end{equation*}
which is not stackable.

\medskip
\noindent\textbf{Induction step.}
Assume the statement holds for $n$, and let
\begin{equation*}
c=\econf{m}{n+1},
\end{equation*}
where
\begin{equation*}
m<2^{n+1}
\qquad\text{and}\qquad
m\equiv 2^{n+1}\pmod 3.
\end{equation*}
Suppose, toward a contradiction, that there exists a stacked configuration
$c'\in\Pebl{c}{P_{n+1}}$. Fix a pebbling sequence
\begin{equation*}
c=c_0,c_1,\dots,c_r=c',
\qquad
c_{i+1}=\poper{u_i}{w_i}{c_i}\quad(i<r).
\end{equation*}
Then
\begin{equation*}
\supp{c'}=\{w_{r-1}\}.
\end{equation*}

\medskip
\noindent\textbf{Step 1: a valuation bound at the right endpoint.}
Define a valuation $R$ on $P_{n+1}$ by
\begin{equation*}
R(v_i)=2^{i-1}\qquad (1\le i\le n+1).
\end{equation*}
Then
\begin{equation*}
w_R(c)=m+2^n<2^{n+1}+2^n=3\cdot 2^n.
\end{equation*}
Since pebbling does not increase valuations, every
$c^*\in\Pebl{c}{P_{n+1}}$ satisfies
\begin{equation*}
w_R(c^*)<3\cdot 2^n.
\end{equation*}
Therefore,
\begin{equation}\label{eq:bound-right}
c^*(v_{n+1})\le 2
\end{equation}
for every $c^*\in\Pebl{c}{P_{n+1}}$.

\medskip
\noindent\textbf{Step 2: the final stack is not at $v_{n+1}$.}
We may assume that
\begin{equation*}
\supp{c'}\ne \{v_{n+1}\}.
\end{equation*}
Indeed, if $\supp{c'}=\{v_{n+1}\}$, then by \eqref{eq:bound-right} we have
$c'(v_{n+1})\in\{1,2\}$. If $c'(v_{n+1})=1$, then $c_{r-1}$ is stacked at $v_n$.
If $c'(v_{n+1})=2$, then one move $\pmove{v_{n+1}}{v_n}$ produces a
configuration stacked at $v_n$. In either case we may replace $c'$ by a stacked
configuration whose support is not $\{v_{n+1}\}$.

\medskip
\noindent\textbf{Step 3: the first move into and the first move out of $v_{n+1}$.}
Since $c(v_{n+1})=1$ and $c'(v_{n+1})=0$, the pebble initially placed at
$v_{n+1}$ must eventually be removed. In order to remove a pebble from
$v_{n+1}$, there must first be a move into $v_{n+1}$. Let $k$ be the least
index such that
\begin{equation*}
(u_k,w_k)=(v_n,v_{n+1}),
\end{equation*}
and let $\ell$ be the least index such that
\begin{equation*}
(u_\ell,w_\ell)=(v_{n+1},v_n).
\end{equation*}
Then $k<\ell$.

For every $i$ with $k\le i<\ell$, no move removes a pebble from $v_{n+1}$, so
$c_i(v_{n+1})=2$. Hence, \eqref{eq:bound-right} implies that
\begin{equation*}
(u_i,w_i)\ne (v_n,v_{n+1})
\qquad (k\le i<\ell).
\end{equation*}
Thus, between the first move into $v_{n+1}$ and the first move out of
$v_{n+1}$ there is no second move into $v_{n+1}$.

\medskip
\noindent\textbf{Step 4: partitioning the initial stack.}
We may write
\begin{equation}\label{eq:c0cl}
m\cdot e_{v_1}\oplus e_{v_{n+1}}
=c_0\mapsto c_\ell=(c_\ell'\oplus e_{v_{n+1}})\oplus e_{v_{n+1}}.
\end{equation}
Since the move $\pmove{v_{n+1}}{v_n}$ does not occur among the first $\ell$
steps, the extra pebble at $v_{n+1}$ is untouched during this part of the
sequence. Therefore, \eqref{eq:c0cl} yields
\begin{equation}\label{eq:mcl}
m\cdot e_{v_1}\mapsto c_\ell'\oplus e_{v_{n+1}}.
\end{equation}

Now Lemma~\ref{lm:partition} splits the initial stack into two parts, one that
produces $c_\ell'$ and one that produces the extra pebble at $v_{n+1}$. Thus,
there exist integers $m_0,m_1\ge 0$ with $m_0+m_1=m$ such that
\begin{equation*}
m_1e_{v_1}\mapsto c_\ell',
\qquad
m_0e_{v_1}\mapsto e_{v_{n+1}}.
\end{equation*}

Since
\begin{equation*}
c_{\ell+1}=c_\ell'\oplus e_{v_n},
\end{equation*}
and $c_{\ell+1}\mapsto c'$, we obtain
\begin{equation}\label{eq:m1-sta}
m_1e_{v_1}\oplus e_{v_n}\mapsto c'.
\end{equation}

Next apply the Homomorphism Lemma (Lemma~\ref{lm:homomorphism}) to the homomorphism $\varphi$  from $P_{n+1}$ onto $P_n$ that sends $v_i$ to $v_i$ for $1\le i\le n$ and sends $v_{n+1}$ to $v_{n-1}$. 

Let $c''=c'\restriction\{v_1,\ldots,v_n\}$. Then $c''$ is stacked.

Then from \eqref{eq:m1-sta} we obtain 
\begin{equation*}
\econf{m_1}{n}\mapsto c''.
\end{equation*}

So 
\begin{equation}\label{eq:stackable-m-n}
\text{$\econf{m_1}{n}$ is stackable.}
\end{equation}

\medskip
\noindent\textbf{Step 5: the induction contradiction.}
Because $R$ is a valuation,
\begin{equation*}
m_0=w_R(m_0e_{v_1})\ge w_R(e_{v_{n+1}})=2^n.
\end{equation*}
Moreover, $P_{n+1}$ is bipartite, so the Imbalance Invariance Lemma applies.
Since
\begin{equation*}
\imb(m_0e_{v_1})=m_0,
\qquad
\imb(e_{v_{n+1}})=(-1)^n\equiv 2^n\pmod 3,
\end{equation*}
we obtain
\begin{equation*}
m_0\equiv 2^n\pmod 3.
\end{equation*}
Since $m\equiv 2^{n+1}\equiv -2^n\pmod3$ and $m_0\equiv 2^n\pmod3$, we have
\begin{equation*}
m_1=m-m_0\equiv -2^n-2^n=-2^{n+1}\equiv 2^n\pmod 3.
\end{equation*}

Now $m_1\le m-m_0<2^{n+1}-2^n=2^n$, so the inductive hypothesis applies to
$\econf{m_1}{n}$. Thus, $\econf{m_1}{n}$ is not stackable, contradicting
\eqref{eq:stackable-m-n}. This contradiction completes the induction.
\end{proof}

\begin{theorem}\label{tm:pn}
   For every $n\ge 2$,
   \begin{equation*}
   \stack{P_n}=2^n-1.
   \end{equation*}
   \end{theorem}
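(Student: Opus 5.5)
The plan is to prove the two inequalities separately. The lower bound comes directly from Theorem~\ref{tm:path-lower}; the upper bound comes from an induction on $n$ that peels off an endpoint.

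\emph{Lower bound.} Take $m=2^n-3$. Then $m<2^n$ and $m\equiv 2^n\pmod 3$. For $n\ge 2$ we have $m\ge 1$, so $\econf{m}{n}$ is a legitimate configuration. By Theorem~\ref{tm:path-lower} it is not stackable, and its size is $m+1=2^n-2$. Hence $\stack{P_n}\ge 2^n-1$. (For $n=2$ this is the configuration $(1,1)$.)

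\emph{Upper bound.} I would show by induction on $n$ that every configuration of size at least $2^n-1$ on $P_n$ is stackable. I prove "at least" rather than "exactly" so that the hypothesis survives the loss of pebbles during reductions.

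\emph{Base case $n=2$.} With at least three pebbles on $P_2$, one move from the heavier vertex toward the lighter one, repeated as needed, reaches a stack. For example, $(2,1)\mapsto(0,2)$ and $(1,2)\mapsto(2,0)$.

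\emph{Inductive step.} Let $c$ live on $P_{n+1}$ with $\norm{c}\ge 2^{n+1}-1$, and let $a=c(v_{n+1})$. If $c$ is already stacked we are done. Otherwise I perform $\lfloor a/2\rfloor$ moves $\pmove{v_{n+1}}{v_n}$. This loses only $\lfloor a/2\rfloor$ pebbles, so at least $2^{n+1}-1-\lfloor a/2\rfloor$ remain.

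\emph{Case 1: $a$ even.} Now $v_{n+1}$ is empty. The restriction to $P_n$ has size at least $2^{n+1}-1-a/2$. Since $c$ was not stacked at $v_{n+1}$, the $P_n$ part originally held at least one pebble, so the restriction has at least $a/2+1$ pebbles. One of the two quantities $2^{n+1}-1-a/2$ and $a/2+1$ is at least $2^n-1$ unless $a$ is close to $2^{n+1}$. In that regime, though, the restriction is dominated by the large pile at $v_n$, and I would handle it directly: first push the few pebbles of $v_1,\dots,v_{n-1}$ toward $v_n$ using the surplus, or else stack them elsewhere. Otherwise the induction hypothesis stacks the restriction inside $P_n$, and since $v_{n+1}$ is empty the result is stacked in $P_{n+1}$.

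\emph{Case 2: $a$ odd.} One pebble is stuck at $v_{n+1}$. Here I would use the partition idea of Lemma~\ref{lm:partition} in reverse. Split the pebbles on $v_1,\dots,v_n$ into a part of size at least $2^n-1$, which is stackable within $P_n$ by induction, and a remainder. Then I would aim to stack at $v_n$, absorbing the lone pebble at $v_{n+1}$ by first routing one pebble to $v_{n+1}$, making two there, and moving them back. Routing a pebble to the endpoint costs at most $2^n$ pebbles, by $\pi(P_{n+1})=2^n$ for an endpoint target. The budget should work because $2^{n+1}-1=(2^n-1)+2^n$.

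I expect the main obstacle to be the bookkeeping in these cases. The two parts cannot interfere, and the induction hypothesis stacks at an uncontrolled vertex, which may not be $v_n$. My first attempt would be to strengthen the hypothesis so that the stack can be placed at a prescribed end, or to derive that from the structure. Failing that, I would use the homomorphism $P_{n+1}\to P_n$ folding $v_{n+1}$ onto $v_{n-1}$, as in the proof of Theorem~\ref{tm:path-lower}, together with the imbalance invariant, to control where the final stack lands.
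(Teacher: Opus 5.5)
Your lower bound is the paper's argument. Your Case~1 is the paper's even case, and it needs no exceptional regime: $\max(2^{n+1}-1-a/2,\,a/2+1)\ge 2^n$ for every $a$ (equivalently, $v_1,\dots,v_n$ end up with $\norm{c}-a/2\ge\norm{c}/2$ pebbles). So you can drop the vague fallback there.

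The genuine gap is Case~2 ($a$ odd). You stack part of the pebbles on $v_1,\dots,v_n$ by induction, and only afterwards try to merge the lone pebble at $v_{n+1}$ into that stack. This requires the stack to sit at $v_n$, and the strengthening you propose to guarantee this is false. For example, $(3,0)$ on $P_2$ has $2^2-1$ pebbles, but the only other configuration reachable from it is $(1,1)$, so it cannot be stacked at $v_2$. Your fallback does not close the gap either. The homomorphism only transports stackability from $P_{n+1}$ down to $P_n$, which is the wrong direction for an upper bound. Imbalance is an obstruction, not a construction.

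Your budget is also wrong. For $a=1$ and $\norm{c}=2^{n+1}-1$, only $2^{n+1}-2$ pebbles lie on $v_1,\dots,v_n$. So a part of size $2^n-1$ plus a routing part of size up to $2^n$ need not exist. Take $(2^{n+1}-2)e_{v_1}\oplus e_{v_{n+1}}$: routing one pebble to $v_{n+1}$ consumes exactly $2^n$ pebbles from $v_1$. For odd $a\ge 3$, the $\lfloor a/2\rfloor$ preliminary moves have already spent pebbles, so a generic routing cost of $2^n$ is not affordable.

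The missing idea is to reverse the order: empty $v_{n+1}$ \emph{first}, then apply induction to what remains on $P_n$. Once $v_{n+1}$ is empty, any stack on $P_n$ is a stack on $P_{n+1}$, so its location is irrelevant. The cases go as follows.

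For $a=1$: by $\pi(P_{n+1})=2^n$ and Lemma~\ref{lm:partition}, write $c\ominus e_{v_{n+1}}=c_0\oplus c_1$ with $\norm{c_0}\le 2^n$ and $c_0\mapsto e_{v_{n+1}}$. Run this, then apply $\pmove{v_{n+1}}{v_n}$. This leaves at least $2^{n+1}-1-2^n=2^n-1$ pebbles on $P_n$. The returned pebble is exactly what reaches the threshold, which is why it must be absorbed before the induction.

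For $a=2b+1\ge 5$: after your $b$ moves, $v_n$ already holds at least two pebbles. Then $\pmove{v_n}{v_{n+1}}$ and $\pmove{v_{n+1}}{v_n}$ empty $v_{n+1}$ at total cost $b+2$. Since $\norm{c}\ge 2b+2$, at least $\norm{c}/2-1$ pebbles remain, hence at least $2^n-1$.

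For $a=3$: apply $\pmove{v_{n+1}}{v_n}$, $\pmove{v_n}{v_{n+1}}$, $\pmove{v_{n+1}}{v_n}$. If $c(v_n)=0$, first place a pebble on $v_n$ via $\pi(P_n)=2^{n-1}$. At least $3\cdot 2^{n-1}-3\ge 2^n-1$ pebbles remain.

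This is how the paper's proof goes.
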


 \begin{proof}
We first prove the lower bound, and then the upper bound by induction on $n$.

\medskip
\noindent\textbf{Lower bound.}
Let $m=2^n-3$. Then $m<2^n$, and clearly $m\equiv 2^n\pmod 3$. Therefore, Theorem~\ref{tm:path-lower} applies to the
configuration $\econf{m}{n}$, so this configuration is not stackable. Since
\begin{equation*}
\norm{\econf{m}{n}}=m+1=2^n-2,
\end{equation*}
there exists a non-stackable configuration on $P_n$ of size $2^n-2$. Hence,
\begin{equation*}
\stack{P_n}\ge 2^n-1.
\end{equation*}

\medskip
\noindent\textbf{Upper bound.}
We prove by induction that $\stack{P_n}\le 2^n-1$ for all $n\ge 2$. The case
$n=2$ is immediate.

Let $c$ be a non-stacked configuration on $P_{n+1}$ with
\begin{equation*}
\norm{c}\ge 2^{n+1}-1,
\end{equation*}
and put
\begin{equation*}
k=c(v_{n+1}).
\end{equation*}
We show that $c$ can be transformed into a configuration $c'$ such that
\begin{equation*}
c'(v_{n+1})=0
\qquad\text{and}\qquad
\norm{c'}\ge 2^n-1.
\end{equation*}
Restricting $c'$ to the first $n$ vertices and applying the induction
hypothesis will then show that $c$ is stackable.

If $k=2b$ is even, apply $\pmove{v_{n+1}}{v_n}$ exactly $b$ times. Then
\begin{equation*}
c'(v_{n+1})=0
\end{equation*}
and
\begin{equation*}
\norm{c'}=\norm{c}-b\ge \left\lceil\frac{\norm{c}}2\right\rceil\ge 2^n.
\end{equation*}

If $k=2b+1\ge 5$ is odd, first apply $\pmove{v_{n+1}}{v_n}$ exactly $b$ times.
Then $v_{n+1}$ carries one pebble and $v_n$ has gained $b\ge 2$ pebbles. 
Then perform the two additional moves
\begin{equation*}
\pmove{v_n}{v_{n+1}},\qquad \pmove{v_{n+1}}{v_n},
\end{equation*}
which clear $v_{n+1}$. 
Since $c$ is not stacked, $k \le 2^{n+1}-3$, and so $b\le 2^n-2$.
We obtain a configuration $c'$ with
\begin{equation*}
c'(v_{n+1})=0
\qquad\text{and}\qquad
\norm{c'}\ge (2^{n+1}-1) -(b+2)\ge 2^{n+1}-1-2^n=2^n-1.
\end{equation*}
Thus, we can apply the induction hypothesis to $c'$, and conclude that $c$ is stackable.

Assume that  $k=1$. Since $\pi(P_{n+1})=2^n$ by Lemma \ref{lm:partition} we can find configurations $c_0$ and $c_1$ such that
\begin{equation*}
   c-e_{v_{n+1}}=c_0\oplus c_1,\quad \norm{c_0}\le 2^n,\quad  c_0\mapsto e_{v_{n+1}}.
\end{equation*}

Now consider the configuration $c$, carry out the pebble moves which transform $c_0$ into $e_{v_{n+1}}$, and let $c^*$ be the resulting configuration. Then \begin{equation*}
c^*=e_{v_{n+1}}\oplus c_1,\quad c^*(v_{n+1})=2,\quad \norm{c^*}=\norm{c}-\norm{c_0}+1\ge 2^{n+1}-1-2^n+1=2^n.
\end{equation*}
Now carry out a move $\pmove{v_{n+1}}{v_n}$, which produces a configuration $c'$ with
\begin{equation*}c'(v_{n+1})=0
\qquad\text{and}\qquad
\norm{c'}\ge 2^n-1.
\end{equation*}

Finally, if $k=3$, either $c(v_n)\ge 1$ and the moves
\begin{equation*}
\pmove{v_{n+1}}{v_n},\qquad
\pmove{v_n}{v_{n+1}},\qquad
\pmove{v_{n+1}}{v_n}
\end{equation*}
clear $v_{n+1}$, or $c(v_n)=0$ and we first use $\pi(P_n)=2^{n-1}$ to place a
pebble on $v_n$ and then apply the same three moves. In both cases we obtain
$c'$ with
\begin{equation*}
c'(v_{n+1})=0
\qquad\text{and}\qquad
\norm{c'}\ge 2^n-1.
\end{equation*}

Thus, in every case $c$ reaches a configuration $c'$ such that $c'(v_{n+1})=0$
and $\norm{c'}\ge 2^n-1$. Restricting $c'$ to $P_n$ and applying the induction
hypothesis shows that $c'$ is stackable, and hence so is $c$. Therefore,
\begin{equation*}
\stack{P_{n+1}}\le 2^{n+1}-1.
\end{equation*}
\end{proof}

 We conclude with a simple application of the homomorphism lemma, which yields a lower bound for
even cycles.

\begin{corollary}\label{cr:stack_c_2n}
   For each natural number $n$,
   \begin{equation*}
   \stack{C_{2n}}\ge 2^{n+1}-1.
   \end{equation*}
\end{corollary}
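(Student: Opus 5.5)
We will deduce the bound from the Homomorphism Lemma (Lemma~\ref{lm:homomorphism}) together with Theorem~\ref{tm:pn}. The plan is to exhibit a surjective graph homomorphism $\varphi$ from $C_{2n}$ onto the path $P_{n+1}$. Part~(2) of the Homomorphism Lemma then gives
\begin{equation*}
\stack{P_{n+1}}\le \stack{C_{2n}},
\end{equation*}
and Theorem~\ref{tm:pn} gives $\stack{P_{n+1}}=2^{n+1}-1$. Since Theorem~\ref{tm:pn} requires $n+1\ge2$, this covers every $n\ge1$ with $C_{2n}$ meaningful. For $n=1$ the cycle $C_2$ degenerates to $K_2=P_2$, and the bound $3=2^2-1$ is then just Theorem~\ref{tm:pn} itself.

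\textbf{The homomorphism.} Write $V(C_{2n})=\{u_0,\dots,u_{2n-1}\}$ in cyclic order, with indices taken mod $2n$. Let $V(P_{n+1})=\{v_1,\dots,v_{n+1}\}$. We "fold" the cycle along the axis through $u_0$ and $u_n$ by setting
\begin{equation*}
\varphi(u_i)=v_{\min(i,\,2n-i)+1}\qquad(0\le i\le 2n-1).
\end{equation*}
Under this map $u_0\mapsto v_1$ and $u_n\mapsto v_{n+1}$, and the two arcs $u_0,\dots,u_n$ and $u_{2n},\dots,u_n$ are each sent bijectively and monotonically onto the path.

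\textbf{Verification.} There are two things to check.
\begin{enumerate}[(a)]
\item Every edge $u_iu_{i+1}$ maps to an edge. The quantity $\min(i,2n-i)$ changes by exactly $1$ along each cycle edge, and this includes the wrap-around edge $u_{2n-1}u_0$, which goes from $1$ to $0$.
\item The map is onto. This is clear from the arc description above.
\end{enumerate}
The Homomorphism Lemma is stated for homomorphisms onto $G_1$, and $\varphi$ is one.

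\textbf{Main obstacle.} The only delicate point is to confirm that part~(2) of the Homomorphism Lemma is applied in the correct direction, namely $\stack{G_0}\le\stack{G_1}$ with $G_0=C_{2n}$ and $G_1=P_{n+1}$. As stated, however, that inequality would give $\stack{C_{2n}}\le \stack{P_{n+1}}$, which is the wrong direction for a lower bound.

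So the argument should instead go through part~(1) directly. Take the non-stackable configuration $\econf{2^{n+1}-3}{n+1}$ on $P_{n+1}$ given by Theorem~\ref{tm:path-lower}. Lift it to $C_{2n}$ by putting $2^{n+1}-3$ pebbles on $u_0$ and one pebble on $u_n$. The image of this lifted configuration under $\varphi$ is exactly $\econf{2^{n+1}-3}{n+1}$.

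Now suppose the lift could reach a configuration stacked at some vertex $u_j$. By part~(1), its image would reach a configuration stacked at $\varphi(u_j)$, because the image of a stacked configuration is stacked. That contradicts the non-stackability of $\econf{2^{n+1}-3}{n+1}$.

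Hence $C_{2n}$ carries a non-stackable configuration of size $2^{n+1}-2$, and therefore $\stack{C_{2n}}\ge 2^{n+1}-1$. This lifting step is the part I would present carefully in the final proof.
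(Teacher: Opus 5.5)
Your proof is correct and is essentially the paper's: fold $C_{2n}$ onto $P_{n+1}$ and transfer the path bound. Your lifting of $\econf{2^{n+1}-3}{n+1}$ via part~(1) is exactly the argument behind the correct form of part~(2). You are also right that part~(2) of the Homomorphism Lemma is printed with the inequality reversed: the true statement is $\stack{G_0}\ge\stack{G_1}$, as folding $P_3$ onto $P_2$ with $\stack{P_3}=7>\stack{P_2}=3$ shows, and the paper's proof of this corollary in fact uses that correct direction, $\stack{C_{2n}}\ge\stack{P_{n+1}}$.
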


\begin{proof}
There is a graph homomorphism from $C_{2n}$ onto $P_{n+1}$. Hence, by
Lemma~\ref{lm:homomorphism}, we have $\stack{C_{2n}}\ge \stack{P_{n+1}}$.
Now Theorem~\ref{tm:pn} gives $\stack{P_{n+1}}=2^{n+1}-1$, and the result follows.
\end{proof}
\section{Complete multipartite graphs}\label{sec:multipartite-new-2-CHATGPT}

A graph $G$ is \emph{complete $k$-partite}, for some $k\ge 2$, if its vertex set
can be partitioned as
\begin{equation*}
V(G)=X_1\dot\cup X_2\dot\cup\cdots\dot\cup X_k
\end{equation*}
so that $G[X_i]$ is edgeless for each $i$, and every pair of vertices in
distinct parts is adjacent.

In this section we determine the stacking and clearing numbers for complete
graphs, the stacking number for complete bipartite graphs, and both the stacking
and clearing numbers for complete $k$-partite graphs with $k\ge 3$. The proofs
for bipartite and multipartite graphs follow a common pattern: we first reduce
the norm of the configuration, and then reduce the independence number of its
support.

\begin{theorem}\label{tm:complete-graph}
If $n\ge 3$, then
\begin{equation*}
\clear{K_n}=\stack{K_n}=n+1.
\end{equation*}
\end{theorem}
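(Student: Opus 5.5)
The proof has two halves. The lower bound uses a configuration with no legal move. The upper bound shows that every configuration of size $n+1$ on $K_n$ is clearable; since a cleared configuration is stacked, this gives both $\stack{K_n}\le n+1$ and $\clear{K_n}\le n+1$.

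\textbf{Lower bound.} Put one pebble on every vertex of $K_n$. This configuration has size $n$ and admits no pebbling step. Since $n\ge3$, it is neither stacked nor cleared. Hence $\stack{K_n}\ge n+1$ and $\clear{K_n}\ge n+1$.

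\textbf{Stacks.} I first record which stacked configurations are clearable. A stack of $3$ pebbles at $u$ is not: the only available move is $\pmove{u}{v}$, which leaves one pebble at each of $u$ and $v$, and no further move is possible. Every other stack of size $k\ge1$ is clearable:
\begin{itemize}
\item for $k=1$ there is nothing to do, and for $k=2$ one move suffices;
\item for $k\ge4$, the moves $\pmove{u}{v},\pmove{u}{v},\pmove{v}{u}$ turn a stack of $k$ at $u$ into a stack of $k-3$ at $u$;
\item the case $k=6$, where this reduction would land on the bad stack of $3$, is handled directly using a third vertex $w$ (available because $n\ge3$): the moves $\pmove{u}{v},\pmove{u}{v}$ give two pebbles at each of $u,v$, and then $\pmove{u}{w},\pmove{v}{w},\pmove{w}{u}$ finish.
\end{itemize}
Reducing by $3$ repeatedly therefore brings every $k\ne3$ down to $1$, $2$, $4$, $5$ or $6$, and all of these are clearable.

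\textbf{Main claim.} I then prove, by induction on $\norm{c}$, that every configuration $c$ on $K_n$ which has some vertex carrying at least two pebbles, and which is not a stack of $3$, is clearable. If $c$ is stacked, the previous step applies. Otherwise choose $u$ with $c(u)\ge2$ and another occupied vertex $v$, and apply $\pmove{u}{v}$. The size drops by one, $v$ now carries at least two pebbles, and the induction hypothesis applies unless the result is a stack of $3$. That happens only for $c$ of the form $(2,3)$ on $\{u,v\}$ (the only non-stacked configuration of size $4$ that the move $\pmove{u}{v}$ collapses into a stack of $3$). In that case use the other direction instead: $\pmove{v}{u}$ gives $(3,1)$; then $\pmove{u}{v}$ gives $(1,2)$, $\pmove{v}{u}$ gives $(2,0)$, and one more move clears.

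\textbf{Conclusion and main obstacle.} A configuration with $n+1$ pebbles on $n$ vertices has some vertex with at least two pebbles, by pigeonhole. Since $n+1\ge4$, it is not a stack of $3$, so it is clearable. I expect the main care to be needed in checking that the greedy merging never gets forced into the stuck stack of $3$, that is, in verifying the small exceptional configurations above exhaustively.
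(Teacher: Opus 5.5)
\textbf{Gap in the exceptional case of the main claim.} Your lower bound and your analysis of stacks are correct, and so is the generic step of the induction. The exceptional case, however, is misidentified. Suppose $c$ is not stacked and $\pmove{u}{v}$ turns it into a stack of $3$. Since $v$ ends with $c(v)+1\ge 2$ pebbles, the stack must sit at $v$. This forces $c(v)=2$, $c(u)=2$ and no pebbles elsewhere, so the bad configuration is $2e_u\oplus 2e_v$, which has size $4$, as your parenthetical says. It is not $(2,3)$: that configuration has size $5$, and $\pmove{u}{v}$ sends it to a stack of $4$, which is harmless.

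Your remedy also fails on the real exception. By symmetry, $\pmove{v}{u}$ applied to $2e_u\oplus 2e_v$ again gives a stack of $3$, which is stuck. This is not a marginal case. On $K_3$, the configuration $2e_u\oplus 2e_v$ is itself one of size $n+1$. For larger sizes your greedy step reaches it, e.g.\ from $4e_u\oplus e_v$ via $\pmove{u}{v}$, and at that point you invoke the induction hypothesis for a configuration your argument never handled.

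\textbf{Repair and comparison with the paper.} The fix needs the third vertex $w$ guaranteed by $n\ge 3$. From $2e_u\oplus 2e_v$, perform $\pmove{u}{w}$, $\pmove{v}{w}$, $\pmove{w}{u}$; this is the same sequence you already used for the stack of $6$. Equivalently, perform $\pmove{u}{w}$ to reach $2e_v\oplus e_w$, to which the induction hypothesis applies. With this patch your proof is correct and close to the paper's. The paper avoids the stack of $3$ differently: it restricts its claim to configurations with $\norm{c}>|\supp{c}|$ and $\norm{c}\ge 4$, and checks the four configurations of size $4$ by hand, including $(2,2,0,\dots)$. It then treats a stack of size greater than $4$ with a single move to an empty vertex, rather than your reduction modulo $3$.
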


\begin{proof}
Clearly
\begin{equation*}
\clear{K_n},\ \stack{K_n}\ge n+1,
\end{equation*}
witnessed by the configuration in $\conf{K_n}{n}$ that assigns one pebble to
each vertex.

Since every configuration in $\conf{K_n}{n+1}$ satisfies
$\norm{c}>|\supp{c}|$ and $\norm{c}\ge 4$, it suffices to prove the following
claim.

\medskip
\noindent\textbf{Claim.}
If $c$ is a configuration on $K_n$ such that
\begin{equation}\label{eq:Kn_CLa}
\norm{c}>|\supp{c}|
\qquad\text{and}\qquad
\norm{c}\ge 4,
\end{equation}
then $c$ is clearable.

\medskip
For $\norm{c}=4$, up to isomorphism the possibilities are
\begin{equation*}
(4,0,0,\dots),\qquad (3,1,0,\dots),\qquad (2,2,0,\dots),\qquad (2,1,1,0,\dots),
\end{equation*}
and all of them are clearable.

Assume now that $\norm{c}>4$ and that the claim holds for all configurations of
smaller norm satisfying the  condition \eqref{eq:Kn_CLa}.

If there are distinct vertices $v,w$ such that $c(v)\ge 2$ and $c(w)\ge 1$, let
$c'$ be the result of the pebbling step $\pmove{v}{w}$. Then
\begin{equation*}
\norm{c'}=\norm{c}-1\ge 4
\qquad\text{and}\qquad
|\supp{c'}|<\norm{c'}.
\end{equation*}
Hence, $c'$ satisfies the assumptions of the claim, so $c'$ is clearable by the inductive assumption, and therefore $c$ is
clearable as well.

Thus, no such pair $v,w$ exists. Since $\norm{c}>|\supp{c}|$, there is a vertex
$w$ with $c(w)\ge 2$, and therefore $w$ is the unique vertex in $\supp{c}$. In
particular, $c(w)=\norm{c}>4$. Choose any $v\neq w$, and let $c'$ be the result
of the pebbling step $\pmove{w}{v}$. Then
\begin{equation*}
\norm{c'}=\norm{c}-1\ge 4
\qquad\text{and}\qquad
|\supp{c'}|=2<\norm{c'}.
\end{equation*}
Again $c'$ satisfies the assumptions of the claim, so $c'$ is clearable by the inductive assumption, and hence $c$ is
clearable. This proves the claim.

Therefore,
\begin{equation*}
\clear{K_n}\le n+1.
\end{equation*}
Since every clearable configuration is stackable, we also have
$\stack{K_n}\le \clear{K_n}$,
which completes the proof.
\end{proof}

\begin{theorem}\label{tm:complete-bipartite}
If $G$ is a complete bipartite graph with at least three vertices, then
\begin{equation*}
\stack{G}=3\cdot\alpha(G)+1.
\end{equation*}
\end{theorem}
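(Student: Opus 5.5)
The lower bound is immediate. A complete bipartite graph with at least three vertices is connected and different from $K_2$, so Theorem~\ref{tm:alpha-2} gives $\stack{G}\ge 3\cdot\alpha(G)+1$. The work is in the upper bound. Write $V(G)=A\dot\cup B$ with $|A|=a\ge|B|=b$ and $a\ge2$; then $\alpha(G)=a$.

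Following the pattern announced at the start of the section, I would prove a stronger claim by induction on $\norm{c}$. The claim is that every configuration $c$ on $G$ satisfying
\begin{equation*}
\norm{c}\ge 3\cdot\alpha(c)+1
\end{equation*}
is stackable, where $\alpha(c)=\alpha(G[\supp{c}])$. For complete bipartite $G$ this quantity is explicit. It equals $\max\{|\supp{c}\cap A|,|\supp{c}\cap B|\}$ when the support meets both sides. It equals $|\supp{c}|$ when the support lies in one side. Since $\alpha(c)\le a$, the claim yields $\stack{G}\le 3a+1$. The base cases are $\alpha(c)=1$ with small norm. There the support is a single vertex, which is already stacked, or an edge $uw$ with $\norm{c}\ge4$; in the latter case one pushes everything onto one endpoint, in the style of the $K_n$ argument.

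The inductive step has two types of moves.

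\emph{Norm-reducing moves.} Suppose some $u$ has $c(u)\ge2$ and a neighbour $w$ has $c(w)\ge1$. Then $\pmove{u}{w}$ lowers the norm by one and does not enlarge the support, so $\alpha$ does not increase. This preserves the invariant as long as $\norm{c}-1\ge 3\alpha(c)+1$. So the interesting case is the tight one, $\norm{c}=3\alpha(c)+1$.

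\emph{Support-reducing moves.} In the tight case I would instead pick a side, say $A$, with $|\supp{c}\cap A|=\alpha(c)$ and $\alpha(c)\ge2$. I would then eliminate one vertex $x\in\supp{c}\cap A$ at a cost of at most $3$ pebbles, without creating new support on the side that attains the maximum. This drops $\alpha$ by one while dropping the norm by at most $3$, which restores the invariant. The eliminations are as follows.
\begin{itemize}
\item If $c(x)$ is even, repeatedly pebble $x\to u$ for some $u\in B$ (possibly empty), preferably a $u$ already in the support.
\item If $c(x)=1$, use a vertex $u\in B$ with $c(u)\ge2$. The moves $\pmove{u}{x}$, $\pmove{x}{u}$ cost $2$ pebbles.
\item If $c(x)\ge3$ is odd, use $\pmove{x}{u}$ repeatedly until one pebble is left. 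Then apply $\pmove{u}{x}$, $\pmove{x}{u}$, which needs $u$ to have at least $2$ pebbles at that moment.
\end{itemize}

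The main obstacle is the subcase where all relevant vertices carry exactly one pebble, or where no vertex of $B$ has two pebbles available. In this subcase we first need a cheap way to build a pair of pebbles on the opposite side. For example, if $x,y\in A$ both have $\ge2$ pebbles, two moves into the same $u\in B$ give $u$ two pebbles. We must also make sure that $B$ does not become the larger side. To handle this I would do a careful count using the tightness $\norm{c}=3\alpha(c)+1$. By pigeonhole this forces some vertex in the support to have at least $4$ pebbles, or many vertices to have $\ge2$. That is enough fuel to run the elimination above at amortized cost $3$ per removed vertex. If this count fails in a boundary case such as $b=1$ (stars), I would treat that case separately by directly gathering onto the centre.
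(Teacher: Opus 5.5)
Your lower bound is the same argument as the paper's. The upper-bound induction, however, has a genuine gap. The class $\norm{c}\ge 3\alpha(c)+1$ is not closed under the single-vertex eliminations you propose, and the failure is not confined to stars. Take $G=K_{a,b}$ with $a\ge 3$ (e.g.\ $K_{3,3}$) and put $1,1,8$ pebbles on three vertices $x_1,x_2,y$ of the $a$-side. Then $\norm{c}=10=3\alpha(c)+1$, and for $K_{3,3}$ this is a configuration of size $3\alpha(G)+1$ that you must handle.

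Every move lowers the norm, so getting back into your class requires lowering $\alpha$, which means emptying one of $x_1,x_2,y$. Emptying $y$ takes at least four moves out of $y$. Emptying $x_i$ requires a move into $x_i$ from some $u$ on the other side. That needs two earlier moves into the initially empty $u$, plus the move out of $x_i$, so again at least four moves. Hence every configuration reachable from $c$ with $\alpha=2$ has norm at most $6<3\cdot 2+1$. Your invariant can only be regained after deleting two support vertices at once (e.g.\ at $(x_1{:}1,\,u{:}3)$, norm $4$), and the proposal gives no general mechanism for this. Your remark about a ``careful count'' and ``amortized cost $3$'' is exactly where the proof is missing.

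There are two further gaps.
\begin{enumerate}[(1)]
\item When $|\supp{c}\cap A|=|\supp{c}\cap B|=\alpha(c)$, deleting one vertex of $A$ does not lower $\alpha(c)$. There you must delete one vertex from each side at total cost at most $3$.
\item Clearing an even $c(x)$ costs $c(x)/2$. You must therefore choose $x$ with $c(x)\le 3$, which exists in the tight case.
\end{enumerate}

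The paper closes the gap by strengthening the inductive statement (Proposition~\ref{pr:bi}). It shows that $c$ is stackable if $\norm{c}\ge 3\alpha(c)+1$, \emph{or} if $\norm{c}=3\alpha(c)$ and $\supp{c}$ contains an edge, with base case $(2,1)$ on $K_2$.
\begin{enumerate}[(1)]
\item If $\norm{c}\ge 3\alpha(c)+1$ and $\alpha(c)\ge 2$, one suitably chosen move across the bipartition lowers the norm by one and stays in the class. The move goes into an empty vertex if the support lies in one side.
\item If $\norm{c}=3\alpha(c)$ with an edge, one deletes a vertex of the larger side, or one from each side when the sides are equal, at cost at most $3$. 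This again lands in the class, with $\alpha$ decreased by one.
\end{enumerate}
On your bad example, $\pmove{y}{u}$ gives norm $9=3\cdot 3$ with an edge. Then $\pmove{y}{u}$, $\pmove{u}{x_1}$, $\pmove{x_1}{u}$ gives norm $6=3\cdot 2$ with an edge. Repeating this with $x_2$ leaves $(2,1)$ on the edge $yu$, which stacks.
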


\begin{theorem}\label{tm:multi}
If $G$ is a complete $k$-partite graph for some $k\ge 3$, then
\begin{equation*}
\clear{G}=\stack{G}=\max\bigl(|V(G)|+1,\; 3\cdot \alpha(G)+1\bigr).
\end{equation*}
\end{theorem}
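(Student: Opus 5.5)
The lower bounds come from earlier results. The configuration with one pebble on every vertex has size $|V(G)|$. It admits no pebbling step and is not stacked, since $|V(G)|\ge 3$. Hence $\stack{G}\ge |V(G)|+1$. Because $k\ge3$, $G\neq K_2$, so Theorem~\ref{tm:alpha-2} gives $\stack{G}\ge 3\alpha(G)+1$. Every cleared configuration is stacked, so a non-stackable configuration is also non-clearable. This gives $\clear{G}\ge\stack{G}$, and the same observation shows $\stack{G}\le\clear{G}$. It therefore remains to prove
\begin{equation*}
\clear{G}\le N:=\max\bigl(|V(G)|+1,\;3\alpha(G)+1\bigr),
\end{equation*}
that is, that every configuration of size at least $N$ is clearable. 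Non-bipartiteness is automatic for $k\ge3$.

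I would imitate the claim in the proof of Theorem~\ref{tm:complete-graph} with a stronger invariant. Writing $X_1,\dots,X_k$ for the parts, note that $\alpha(c)=\max_i|\supp{c}\cap X_i|$ whenever the support meets at least two parts. The claim to prove is: \emph{if $\norm{c}>|\supp{c}|$, $\norm{c}\ge 3\alpha(c)+1$ and $\norm{c}\ge4$, then $c$ is clearable.} Any $c$ of size at least $N$ satisfies all three conditions, since $\alpha(c)\le\alpha(G)$ and $|\supp{c}|\le|V(G)|$. The base cases are configurations of size $4$. There $\alpha(c)=1$, so the support is a clique, and the argument proceeds exactly as in Theorem~\ref{tm:complete-graph} using a triangle formed by three parts. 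The induction step is on $\norm{c}$. I look for a single move $\pmove{v}{w}$, with $v,w$ in different parts and $c(v)\ge2$, such that the resulting $c'$ still satisfies the three conditions. Such a move lowers the norm by one. It does not increase the support when $c(w)\ge1$, and it keeps the support unchanged unless $c(v)\in\{2,3\}$.

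The routine case is when the inequalities are not tight: $\norm{c}\ge 3\alpha(c)+2$ and $\norm{c}\ge|\supp{c}|+2$. Then any move into an occupied vertex of another part works. If the support lies in a single part, I first move out of that part into an empty vertex of a second part. If the support is a single vertex $x$, I repeat the stacked-vertex trick from the $K_n$ proof.

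The main obstacle is the tight case, and I would handle it by choosing the source vertex carefully. If $\norm{c}=|\supp{c}|+1$, exactly one vertex $v$ carries two pebbles and all others carry one. The move $\pmove{v}{w}$ into an occupied $w$ in another part then removes $v$ from the support, so both $|\supp{c}|$ and $\norm{c}$ drop by one. If $\norm{c}=3\alpha(c)+1$, I want a move whose source $v$ lies in a largest part $X_i$ (one with $|\supp{c}\cap X_i|=\alpha(c)$) and empties $v$. That requires $c(v)\in\{2,3\}$. When $c(v)=3$, I would use the triple $\pmove{v}{w},\pmove{v}{w}$ is impossible, so instead I would use $\pmove{v}{w}$ followed by $\pmove{w}{v}$-type corrections, aiming to decrease $\alpha(c)$ by one while losing at most three pebbles. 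If several parts are simultaneously maximal, the same step must be done for each, and the counting must be checked carefully. The delicate subcase is when every vertex in the largest part holds exactly one pebble and the surplus sits in other parts. There I would first route two pebbles onto a vertex of the largest part from another part and then push them out, verifying that the net loss is at most $3$ per unit decrease of $\alpha(c)$. I expect this bookkeeping, which keeps both inequalities simultaneously after each reduction, to be the heart of the proof.
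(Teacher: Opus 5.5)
Your framework is sound up to the hard case. The lower bounds are fine, and so is the reduction to the claim ``$\norm{c}>|\supp{c}|$, $\norm{c}\ge 3\alpha(c)+1$, $\norm{c}\ge 4$ implies clearable''. The size-$4$ base case, the non-tight case, and the case $\norm{c}=|\supp{c}|+1$ with $\norm{c}\ge 3\alpha(c)+2$ all check out. The genuine gap is the case $\norm{c}=3\alpha(c)+1$. This is exactly the case that matters whenever $N=3\alpha(G)+1$, and you describe it only as intentions. Moreover, the mechanism you propose (empty one vertex of a largest part at cost at most $3$) cannot work in general, because your strict invariant cannot always be restored one unit of $\alpha$ at a time.

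Take $G=K_{3,1,1}$ with parts $\{a_1,a_2,a_3\},\{b\},\{d\}$, and let $c(a_1)=c(a_2)=1$, $c(a_3)=8$. Then $\norm{c}=10=N=3\alpha(c)+1$. No vertex of the large part carries $2$ or $3$ pebbles, and this is not your ``delicate subcase'' either.
\begin{itemize}
\item Emptying $a_3$ needs at least four moves out of $a_3$.
\item Emptying $a_1$ (or $a_2$) needs a move into it from $b$ or $d$. That vertex first needs two incoming moves, and then $a_1$ needs one move out, so again at least four moves are required.
\end{itemize}
Hence every reachable configuration with $\alpha\le 2$ has norm at most $6<3\cdot 2+1$. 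Your invariant can only be regained by dropping $\alpha$ by two at once, for example reaching $a_2\colon 1$, $b\colon 2$, $d\colon 1$ after six moves. Nothing in your plan produces such sequences in general.

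The doubly tight case $\norm{c}=|\supp{c}|+1=3\alpha(c)+1$ has the same defect. The unique doubled vertex forces the first move, and afterwards the invariant fails unless that vertex lay in the unique largest part. Saying that ``the same step must be done for each'' maximal part does not supply the needed argument.

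The missing idea is the relaxed invariant of Proposition~\ref{pr:multi}. It requires either $\norm{c}\ge 3\alpha(c)+1$, or $\norm{c}=3\alpha(c)$ together with an edge in $\supp{c}$, and in addition some vertex carrying at least two pebbles.
\begin{itemize}
\item With this slack, from the level $3\alpha(c)+1$ a single move that keeps an edge and a doubled vertex is always admissible. In the example, $\pmove{a_3}{b}$ gives norm $9=3\alpha$.
\item The parameter $\alpha$ is decreased only from the level $\norm{c}=3|S_1|$. There $S\ne S_1$ forces a vertex of $S_1$ with at most two pebbles. That vertex can be emptied at cost at most $3$; in the example this is $\pmove{a_3}{b}$, $\pmove{b}{a_1}$, $\pmove{a_1}{b}$.
\item The condition $\norm{c}>|\supp{c}|$ rules out three parts of maximal size at that level.
\item The configurations with all values at most $2$ and exactly one $2$ are handled separately by Dirac's theorem. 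It gives a Hamiltonian cycle in $G[\supp{c}]$, along which everything is cleared starting from the doubled vertex.
\end{itemize}
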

The key technical statements are the following two propositions.

\begin{prop}\label{pr:bi}
Let $G$ be a complete bipartite graph and let $c\in\conf{G}{}$. If
\begin{equation*}
\norm{c}\ge 3\cdot \alpha(c)+1,
\quad\text{or}\quad
\bigl(\norm{c}=3\cdot \alpha(c)\text{ and }\supp{c}\text{ contains an edge}\bigr),
\end{equation*}
then $c$ is stackable.
\end{prop}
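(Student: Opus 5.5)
The plan is to argue by induction on $\norm{c}$, with the inner bookkeeping done on $\alpha(c)$. Write the bipartition of $G$ as $(A,B)$, $S=\supp{c}$, $a=|S\cap A|$, $b=|S\cap B|$, and assume without loss of generality $a\ge b$. Because $G$ is complete bipartite, there are two cases. If $b\ge1$, then $G[S]$ is complete bipartite and $\alpha(c)=a$. If $b=0$, then $S$ is independent and $\alpha(c)=|S|$.

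\textbf{Base case $\alpha(c)=1$.} Here $S$ is either a single vertex, in which case $c$ is already stacked, or a single edge $uv$. In the edge case $\norm{c}\ge3$, and $c$ lives on a copy of $P_2$. Theorem~\ref{tm:pn} gives $\stack{P_2}=3$, so $c$ is stackable using only the edge $uv$.

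\textbf{Norm-reduction step.} I first handle configurations of large norm.
\begin{enumerate}[(i)]
\item Suppose some vertex $u$ with $c(u)\ge2$ has a neighbour $w\in S$. The move $\pmove{u}{w}$ lowers the norm by one and does not enlarge the support, so $\alpha$ does not increase.
\begin{enumerate}[(a)]
\item If $\norm{c}\ge 3\alpha(c)+2$, the hypothesis survives the move.
\item If $\norm{c}=3\alpha(c)+1$ and $S$ contains an edge, the new configuration has norm $3\alpha$ and $S$ still contains an edge, so the second alternative of the hypothesis holds.
\end{enumerate}
In both subcases induction applies.
\item Suppose $S$ is independent, say $S\subseteq A$, and $\norm{c}\ge3|S|+1$. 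Some vertex $x$ then carries at least two pebbles. Apply $\pmove{x}{y}$ for some $y\in B$ with $c(y)=0$. This creates an edge in the support, keeps $\alpha=|S|$ (since $a\ge 1=b$), and lowers the norm by one, so the second alternative holds.
\end{enumerate}
In a complete bipartite graph, once $S$ contains an edge, every vertex of $S$ has a neighbour in $S$. Hence after (i) and (ii), the only remaining case is $\norm{c}=3\alpha(c)$ with $S$ containing an edge. In this case any single move drops the norm below $3\alpha$, so a norm move alone is not enough.

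\textbf{Critical case: eliminate a vertex cheaply.} In the remaining case I would find a short sequence of moves that empties one vertex of the larger side, so that $\alpha$ drops by one, while losing at most three pebbles. The resulting configuration then satisfies $\norm{c'}\ge3\alpha(c')$. I still have to check that either its support contains an edge or it has the extra pebble; otherwise the sequence must be arranged to leave one. I would choose $x\in S\cap A$ with $c(x)$ minimal. Since the average number of pebbles per vertex of $A\cap S$ is at most $3$, we have $c(x)\le 3$. The local sequences are:
\begin{enumerate}[(a)]
\item $c(x)=1$: use $y\in S\cap B$ with $c(y)\ge2$, or first create such a $y$ by one move from another vertex. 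Then do $\pmove{y}{x}$, $\pmove{x}{y}$, at a cost of $2$.
\item $c(x)=2$: do $\pmove{x}{y}$, at a cost of $1$.
\item $c(x)=3$: do $\pmove{x}{y}$, then $\pmove{y}{x}$, $\pmove{x}{y}$, at a cost of $3$.
\end{enumerate}
When $a>b$, removing $x$ lowers $\alpha$.

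\textbf{Main obstacle.} I expect the main difficulty in two places. The first is the balanced case $a=b$, where removing one vertex from $A$ does not lower $\alpha$. There I would try to empty a vertex of $B$ as well within the same budget, possibly by concentrating pebbles on one vertex on the other side. The second is the degenerate situations where no vertex of $B\cap S$ can be made to hold two pebbles without extra loss. I expect these small residual configurations, such as $b=1$ with a single pebble on the $B$-side, to need a separate direct check, possibly via the $P_2$ or $K_{1,n}$ cases.
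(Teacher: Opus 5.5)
Your architecture matches the paper's: induction on $\norm{c}$, first lowering the norm, then emptying a cheap vertex to lower $\alpha$. Your treatment of the critical case $\norm{c}=3a$ with $a>b$ is essentially its Step~2.1. But the proposal is not yet a proof, because the balanced case $a=b$, which you only flag as an obstacle, needs an idea you do not supply. Every support vertex carries at least one pebble, so each side of $S$ carries at least $a$, and hence at most $2a$, of the $3a$ pebbles. Thus there are $x\in S\cap A$ and $y\in S\cap B$ with $c(x),c(y)\le 2$. Since $x$ and $y$ are adjacent, you can empty \emph{both} at a cost of at most three pebbles. Choose $x,y$ with exactly two pebbles whenever possible, and fix $x'\in(S\cap A)\setminus\{x\}$ and $y'\in(S\cap B)\setminus\{y\}$; these exist since $a\ge 2$.
\begin{enumerate}[(1)]
\item If $c(x)=c(y)=2$, do $\pmove{x}{y'}$ and $\pmove{y}{x'}$.
\item If, say, $c(x)=1$ and $c(y)=2$, do $\pmove{y}{x}$ and $\pmove{x}{y'}$.
\item If $c(x)=c(y)=1$, then no support vertex carries exactly two pebbles. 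As $3a>2a$, some $z$, say $z\in A$, carries at least three, and the moves $\pmove{z}{y}$, $\pmove{y}{x}$, $\pmove{x}{y'}$ work.
\end{enumerate}
In each case $\alpha$ drops to $a-1$, the norm stays at least $3(a-1)$, and both sides still meet the support, so induction applies. This is the paper's Step~2.2.

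Step (i)(b) also contains a false claim: after the move, the support need not contain an edge. If $u$ is the only support vertex on the smaller side and $c(u)=2$, the move empties that side. For example, take $K_{1,2}$ with centre $u$ and leaves $x_1,x_2$, and let $c(x_1)=2$, $c(u)=2$, $c(x_2)=3$. Then $\norm{c}=7=3\alpha(c)+1$, yet $\pmove{u}{x_1}$ produces $3e_{x_1}\oplus 3e_{x_2}$, the non-stackable configuration of Theorem~\ref{tm:alpha-2}. So the source must be chosen with care. Take it on the larger side if some vertex there carries two pebbles. Otherwise the smaller side carries at least $2a+1$ pebbles on $b\le a$ vertices, so some vertex there carries at least three and survives the move; this is the paper's Step~1.

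Two smaller points in your critical case. In subcase (a) with $a>b$, the auxiliary move that creates a $B$-vertex with two pebbles should come from an $A$-vertex with at least three pebbles. Such a vertex exists because then $c\equiv 1$ on $S\cap B$, so $\sum_{v\in S\cap A}c(v)=3a-b\ge 2a+1$. Also, your subcase $c(x)=3$ is vacuous, since $b\ge 1$ forces $\sum_{v\in S\cap A}c(v)\le 3a-1$.
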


\begin{prop}\label{pr:multi}
Let $G$ be a complete $k$-partite graph with $k\ge 3$, and let $c\in\conf{G}{}$.
Assume that the following two conditions hold:
\begin{enumerate}[(1)]
\item $\norm{c}\ge 3\cdot \alpha(c)+1$, or ($\norm{c}=3\cdot \alpha(c)$ and $\supp{c}$ contains an edge), and
\item there exists $v\in V(G)$ with $c(v)\ge 2$  (i.e. $\norm{c}>\ |\supp{c}|$).
\end{enumerate}
Then $c$ is clearable.
\end{prop}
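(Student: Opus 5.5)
We prove Proposition~\ref{pr:multi} by induction on $\norm{c}$, in the same spirit as the claim in Theorem~\ref{tm:complete-graph}. The strategy is to first reduce the norm while keeping conditions (1) and (2), and then to reduce $\alpha(c)$, i.e.\ the number of parts hit by the support, until the support lies in at most two parts in a configuration already known to be clearable. The base cases are small configurations, say $\norm{c}\le 4$ together with those satisfying (1) with $\alpha(c)=1$. These can be checked by hand as in Theorem~\ref{tm:complete-graph}, using that any two vertices in distinct parts are adjacent and that $k\ge3$ supplies a third part. This third part gives a triangle, so the imbalance obstruction of Lemma~\ref{lm:imbalance} disappears.

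\textbf{Inductive step, norm reduction.} Given $c$ satisfying (1) and (2), we look for a single pebbling step $\pmove{u}{w}$ with $c(u)\ge2$ such that $c'=\poper{u}{w}{c}$ still satisfies (1) and (2). A step into an occupied vertex $w$ adjacent to $u$ keeps $c'(w)\ge2$, so (2) holds. It does not enlarge the support, so $\alpha(c')\le\alpha(c)$ and the norm drops by one. Condition (1) is preserved automatically when $\norm{c}\ge 3\alpha(c)+2$. It is also preserved when $\norm{c}=3\alpha(c)+1$ and the support of $c'$ still contains an edge. The tight cases, $\norm{c}=3\alpha(c)+1$ or $\norm{c}=3\alpha(c)$, are where we must reduce $\alpha$ instead. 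Here we pick a maximum independent subset of the support, which lies inside one part $X_i$. Pebbles on $X_i$ can be sent pairwise to a vertex outside $X_i$, or used in the three-move pattern $\pmove{x}{y},\pmove{x}{y},\pmove{y}{x}$ to merge them. The aim is to empty a vertex of $X_i$, lowering $\alpha$ by one while spending at most $3$ pebbles. This keeps the inequality $\norm{c'}\ge3\alpha(c')+1$, or the equality case with an edge.

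\textbf{Case analysis on the support.} We split into cases. (a) The support meets at least two parts, so it contains an edge. (b) The support lies in a single part $X_i$. In case (b), some vertex has at least two pebbles by (2), and we move them to a vertex of another part to create an edge, costing one pebble. In the tight case this is exactly why the hypothesis allows the variant $\norm{c}=3\alpha(c)$ with an edge. Once the support is a clique (at most one vertex per part), the claim from Theorem~\ref{tm:complete-graph} applied inside the complete subgraph formed by one vertex from each part finishes, since it needs only $\norm{c}>|\supp{c}|$ and $\norm{c}\ge4$.

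\textbf{Main obstacle.} The hardest part is the bookkeeping in the tight cases, where a vertex of the large independent class has exactly $1$ pebble. Emptying it then requires bringing a pebble in from outside, and we must verify that exactly $3$ pebbles are lost and that (2) survives. We would first try to handle this by sending two pebbles from a heavy vertex $u$ outside $X_i$ to a singly occupied $x\in X_i$, and then moving $x$'s two pebbles back onto an occupied vertex. If (2) fails afterwards (all vertices have one pebble), we would use the spare third part to rebuild a double. This is the step where $k\ge3$ is genuinely used.

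The theorems then follow. For the upper bound, every configuration with $\max(|V(G)|+1,3\alpha(G)+1)$ pebbles has $\norm{c}>|\supp{c}|$ and $\norm{c}\ge3\alpha(c)+1$, so Proposition~\ref{pr:multi} applies. For the lower bound, we use Theorem~\ref{tm:alpha-2} together with the all-ones configuration.
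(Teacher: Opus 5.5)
\textbf{Verdict: genuine gap.} Your skeleton is the paper's: induction on $\norm{c}$; a norm-reducing step that preserves (1) and (2); and, in the tight case $\norm{c}=3\alpha(c)$, a reduction of $\alpha(c)$ until the support is a clique, where the claim in the proof of Theorem~\ref{tm:complete-graph} finishes. Your norm reduction (one step into an occupied neighbour) is sound, and it even avoids the paper's use of Dirac's theorem. For that you still need to check that when $\norm{c}=3\alpha(c)+1$ some such step keeps an edge in the support. If none did, there would be a single doubled vertex $u$ with $\supp{c}\setminus\{u\}$ inside one class, forcing $|\supp{c}|+1=3(|\supp{c}|-1)+1$, which is impossible. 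Two corrections to your framing. First, $\alpha(c)=\max_i|\supp{c}\cap X_i|$, not the number of parts met by the support. Second, the endgame is a support meeting each class at most once. Nothing earlier shows that configurations supported on two parts are clearable; Proposition~\ref{pr:bi} only gives stackability.

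The gap is the tight case $\norm{c}=3\alpha(c)$, which carries the whole difficulty and which you leave as a plan.

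\emph{Two tied classes.} Emptying one vertex of a class $X_i$ lowers $\alpha$ only if $X_i$ is the \emph{unique} class with $|\supp{c}\cap X_i|=\alpha(c)$. Take $c(a_1)=c(b_1)=2$ and $c(a_2)=c(b_2)=1$ with $a_j\in X_1$, $b_j\in X_2$. Emptying vertices of one class costs pebbles while $\alpha$ stays $2$, so $\norm{c'}<3\alpha(c')$ and the induction hypothesis is lost. The missing idea is the paper's Step~2.2. Condition (2) gives $|S_1|+|S_2|+|S_3|\le|\supp{c}|<3|S_1|$, so at most two classes can tie. One then empties a vertex of \emph{each} tied class at total cost at most $3$, always ending on an occupied vertex so that (2) survives. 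In this example the moves are $\pmove{a_1}{b_2},\pmove{b_1}{a_2}$; in general one uses $\pmove{v_2}{v_1},\pmove{v_1}{w_2}$ or $\pmove{w}{v_2},\pmove{v_2}{v_1},\pmove{v_1}{w_2}$.

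\emph{Unique maximal class.} Your single-pebble subcase is not verified. A heavy vertex outside $X_i$ need not exist: take $c(a_1)=1$, $c(a_2)=4$ on $X_1$ and $c(b)=1$ on $X_2$. Your fallback, ``use the spare third part to rebuild a double'', is not an argument. You need an explicit detour from a vertex of $X_i$ carrying at least $3$ pebbles, such as $\pmove{a_2}{b},\pmove{b}{a_1},\pmove{a_1}{b}$. The final target must be chosen so that an edge and condition (2) survive, and (2) must be checked in every subcase. The paper's Step~2.1 is itself too quick here. Its inference from $c(v_1)=1$ to $c\restriction S_1=1$ fails on exactly this example, so it has to be patched along these lines rather than quoted.
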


\begin{proof}[Proof of Proposition~\ref{pr:bi}]
We proceed by induction on $m=\norm{c}$.

If $m\le 3$, then $\alpha(c)=1$ and $\supp{c}$ contains an edge.
Hence $m=3$, $G[\supp{c}]=K_2$, and $c=(2,1)$ up to symmetry. 
Since $\pmapsto{(2,1)}{(0,2)}$ is stacked,  the claim holds.

Assume that $m\ge 4$ and that the proposition holds for all configurations of
smaller norm.

Let $V(G)=V_1\dot\cup V_2$ be the bipartition of $G$, and put
\begin{equation*}
S=\supp{c}, \qquad S_i=S\cap V_i \quad (i=1,2).
\end{equation*}
Without loss of generality, assume $|S_1|\ge |S_2|$. Then
\begin{equation*}
\alpha(c)=|S_1|.
\end{equation*}

If $|S_1|=1$, then $|S_2|\le 1$, so $G[\supp{c}]$ is either $K_1$ or $K_2$.
Since $\stack{K_2}=3$, the claim follows. Hence, we may assume that
\begin{equation*}
|S_1|\ge 2.
\end{equation*}

\medskip
\noindent\textbf{Step 1. Reducing the norm.}
Assume first that
\begin{equation}\label{eq:bi-1}
\norm{c}\ge 3\cdot \alpha(c)+1=3 \cdot |S_1|+1.
\end{equation}

If there is $v\in S_1$ with $c(v)\ge 2$, choose $w\in V_2$, $w\in S_2$ provided $S_2\ne 
\emptyset$, 
and perform $\pmove{v}{w}$ to obtain $c'$. Then  $w\in \supp{c'}$ 
and $\supp{c'}\cap V_1\supset S_1\setm \{v_1\}\ne \emptyset, $
so
there is an edge in $\supp{c'}$. Moreover, $\norm{c'}=\norm{c}-1\ge 3\cdot {\alpha}(c')$. 
Thus, we can apply the induction hypothesis to $c'$, which is 
clearable, and hence $c$ is also clearable. 

If $c\restriction S_1=1$, then $|S_2|\le |S_1|$ and \eqref{eq:bi-1} imply that there is $v\in S_2$ with $c(v)\ge 3$.
Pick $w\in S_1$ and   perform $\pmove{v}{w}$ to obtain $c'$. Then $\norm{c'}=\norm{c}-1$, 
$\supp c=\supp{c'}$, and   
there is an edge in $\supp{c'}$. Thus, we can apply the induction hypothesis to $c'$, which is 
clearable, and hence $c$ is also clearable. 

\medskip
\noindent\textbf{Step 2. Reducing the independence number.}
Assume now that
\begin{equation*}
\norm{c}=3\cdot\alpha(c)=3 \cdot |S_1|
\quad\text{and}\quad\text{there is an edge in }\supp{c}.
\end{equation*}
Then 
\begin{equation*}
S_2\neq\emptyset.
\end{equation*}

\noindent\textbf{Step 2.1. The case $|S_1|>|S_2|$.}
We show that one can eliminate at least one vertex from $S_1$.

Since $S_2\neq\emptyset$ and $\norm{c}=3 \cdot |S_1|$, there exists
$v_1\in S_1$ with $c(v_1)\le 2$.

If $c(v_1)=2$, choose $v_2\in S_2$ and perform $\pmove{v_1}{v_2}$. Thus,
\begin{equation*}
\supp{c'}=\supp{c}\setm\{v_1\},
\qquad
\alpha(c')=\alpha(c)-1,
\qquad
\norm{c'}=\norm{c}-1.
\end{equation*}

If $c(v_1)=1$ and there exists $v_2\in S_2$ with $c(v_2)\ge 2$, perform
$\pmove{v_2}{v_1}$ and then $\pmove{v_1}{v_2}$ to obtain $c'$. Then
 
\begin{equation*}
c'(v_1)=0,
\qquad
\alpha(c')=\alpha(c)-1,
\qquad
\norm{c'}=\norm{c}-2.
\end{equation*}
In both cases $\supp{c'}$ contains an edge, so the induction hypothesis applies.

Assume now that $c(v_1)=1$ and   $c\restriction S_2=1$. Then there exists $v_1'\in S_1$ with
$c(v_1')\ge 3$.  Pick $v_2\in S_2$.   Apply the moves
\begin{equation*}
\pmove{v_1'}{v_2},\qquad
\pmove{v_2}{v_1},\qquad
\pmove{v_1}{v_2}
\end{equation*}
to obtain $c'$. Then  $c'(v_1')\ge 3-2=1$, $c'(v_1)=0$  and $c'(v_2)=1$. 
Thus, $\supp{c'}$ contains an edge, and
\begin{equation*}
\norm{c'}=\norm{c}-3,
\qquad
\supp{c'}=\supp{c}\setm\{v_1,v_2\},
\end{equation*}
so the induction hypothesis applies again.

\medskip
\noindent\textbf{Step 2.2. The case $|S_1|=|S_2|$.}
We show that one can eliminate at least one vertex from $S_1$ and at least one vertex from $S_2$.

In this case  there exist vertices $v_i\in S_i$ with $c(v_i)\le 2$ for $i=1,2$.
Choose $v_i\in S_i                                                                 $ with $c(v_i)=2$ if possible, and pick
$w_i\in S_i\setminus\{v_i\}$.

If $c(v_1)=c(v_2)=2$, perform
\begin{equation*}
\pmove{v_1}{w_2},\qquad \pmove{v_2}{w_1}.
\end{equation*}

If exactly one of $c(v_1),c(v_2)$ equals $2$, say $c(v_2)=2$, perform
\begin{equation*}
\pmove{v_2}{v_1},\qquad \pmove{v_1}{w_2}.
\end{equation*}

If $c(v_1)=c(v_2)=1$, then there exists $w\in S$ with $c(w)\ge 3$.
Assume $w\in V_1$ and perform
\begin{equation*}
\pmove{w}{v_2},\qquad
\pmove{v_2}{v_1},\qquad
\pmove{v_1}{w_2}.
\end{equation*}

In each subcase we obtain a configuration $c'$ with
\begin{equation*}
\alpha(c')=\alpha(c)-1,
\qquad
\norm{c'}\ge \norm{c}-3,
\end{equation*}
and $\supp{c'}$ meeting both parts. Thus, the induction hypothesis applies.
\end{proof}

\begin{proof}[Proof of Theorem~\ref{tm:complete-bipartite}]
Theorem~\ref{tm:alpha-2} gives $\stack{G}\ge 3\cdot \alpha(G)+1$, while Proposition~\ref{pr:bi} gives the reverse inequality. Hence, $\stack{G}=3\cdot \alpha(G)+1$.
\end{proof}

\begin{proof}[Proof of Proposition~\ref{pr:multi}]
We proceed by induction on $m=\norm{c}$.

If $m=1$ or $m=2$, then condition (1) cannot hold, so the statement is trivial.

If $m=3$, then $\alpha(c)=1$ and $\supp{c}$ contains an edge. Hence,
$G[\supp{c}]=K_2$, and $c=(2,1,\mathbf{0})$ up to symmetry. Thus, the statement
clearly holds:
\begin{equation*}
(2,1)\to (0,2)\to (1,0).
\end{equation*}

Assume now that $m\ge 4$ and that the proposition holds for all configurations
of smaller norm.

Let $V(G)=\dot\cup_{i=1}^k V_i$ be the multipartition of $G$, and put
\begin{equation*}
S=\supp{c}, \qquad S_i=S\cap V_i \quad (i=1,\dots,k).
\end{equation*}
Without loss of generality assume
\begin{equation*}
|S_1|\ge |S_2|\ge \dots\ge |S_k|.
\end{equation*}
Then
\begin{equation*}
\alpha(c)=|S_1|.
\end{equation*}

If $|S_1|=1$, then $|S_i|\le 1$ for all $i$, so $G[\supp{c}]$ is a complete
graph. Since $\clear{K_{|\supp{c}|}}=|\supp{c}|+1$, the claim follows. Hence we
may assume that
\begin{equation*}
|S_1|\ge 2.
\end{equation*}

\medskip
\noindent\textbf{Step 1. Reducing the norm.}
Assume first that
\begin{equation*}
\norm{c}\ge 3\cdot \alpha(c)+1=3 \cdot |S_1|+1.
\end{equation*}

If $S=S_1$, then there exists $v\in S$ with $c(v)\ge 4$. Choose
$w\in V(G)\setminus V_1$ and perform $\pmove{v}{w}$ to obtain $c'$. Then
\begin{equation*}
c'(v)\ge 2,
\qquad
\supp{c'}=\supp{c}\cup\{w\},
\end{equation*}
so $\supp{c'}$ contains an edge, and
\begin{equation*}
\norm{c'}=\norm{c}-1\ge 3\cdot \alpha(c').
\end{equation*}
Thus, the induction hypothesis applies to $c'$.

Hence we may assume that $S\ne S_1$ and so  $S$ contains an edge.

If there exists $v\in S_i$ with $c(v)\ge 3$ for some $i$, choose
$w\in S\setminus S_i$ and perform $\pmove{v}{w}$ to obtain $c'$. Then
\begin{equation*}
c'(w)\ge 2,
\qquad
\supp{c'}=\supp{c},
\qquad
\alpha(c')=\alpha(c),
\end{equation*}
and so $\supp{c'}$ contains an edge, and
\begin{equation*}
\norm{c'}=\norm{c}-1\ge 3\cdot \alpha(c').
\end{equation*}
Thus, the induction hypothesis applies to $c'$.

Hence we may assume that $c(v)\le 2$ for all $v\in S$.

If there is exactly one vertex $v$ with $c(v)=2$, then
\begin{equation*}
|\supp{c}|\ge 3 \cdot |S_1|,
\end{equation*}
and so $|S\setm S_i|\ge \frac{2}{3}|S|$, 
so every vertex of $G[\supp{c}]$ has degree at least $\frac{|S|}{2}$. Therefore,
$G[\supp{c}]$ has a Hamilton cycle by Dirac's theorem, and starting from the
vertex $v$ we
can clear all but one pebble along the Hamilton cycle.

Assume now that there exist two vertices $v_1$ and $v_2$ with $c(v_i)= 2$.
Let $v_i\in S_{j_i}$. Since $k\ge 3$, we can choose an index $j$ different from
$j_1$ and $j_2$, and pick $w\in V_j$, taking $w\in S_j$ if possible. Apply
$\pmove{v_1}{w}$ to obtain $c'$. Then $c'(v_2)=2$, $c'(w)\ge 1$, and $w$ is adjacent to
$v_2$, so the induction hypothesis applies to $c'$.

\medskip
\noindent\textbf{Step 2. Reducing the independence number.}
Assume now that
\begin{equation*}
\norm{c}=3\cdot \alpha(c)=3 \cdot |S_1|
\qquad\text{and}\qquad
\text{$\supp{c}$ contains an edge}
,
\end{equation*}
that is $S\setminus S_1\neq\emptyset$. 
Then, by \ref{pr:multi}(2), 
\begin{equation*}
|S_1|+|S_2|+|S_3|<\norm{c}=3 \cdot |S_1|,
\end{equation*}
and therefore
\begin{equation*}
|S_3|<|S_1|.
\end{equation*}
We show that one can decrease $\alpha(c)$.

\medskip
\noindent\textbf{Step 2.1. The case $|S_1|>|S_2|$.}

We show that one can eliminate at least one vertex from $S_1$.

Since $S\setminus S_1\neq\emptyset$ and $\norm{c}=3 \cdot |S_1|$, there exists
$v_1\in S_1$ with $c(v_1)\le 2$.
If possible choose $v_1$ with $c(v_1)=2$.

If $c(v_1)=2$, choose $v_2\in S\setminus S_1$ and perform
$\pmove{v_1}{v_2}$ to obtain $c'$. Then $\supp{c'}=\supp{c}\setminus\{v_1\}$ and so 
\begin{equation*}
\alpha(c')=\alpha(c)-1
\qquad\text{and}\qquad
\norm{c'}=\norm{c}-1.
\end{equation*}

If $c(v_1)=1$ and so $c\restriction S_1=1$,  then there exists $v_2\in S\setminus S_1$ with $c(v_2)\ge 2$.
In this case perform $\pmove{v_2}{v_1}$ and then $\pmove{v_1}{v_2}$. Then $\supp{c'}=\supp{c}\setminus\{v_1\}$ and so
\begin{equation*}
c'(v_1)=0,
\qquad
\alpha(c')=\alpha(c)-1,
\qquad
\norm{c'}=\norm{c}-2.
\end{equation*}
In both cases  $\supp{c'}$ contains an edge, and so the induction hypothesis applies.

\medskip
\noindent\textbf{Step 2.2. The case $|S_1|=|S_2|>|S_3|$.}

We show that one can eliminate at least one vertex from $S_1$ and at least one vertex from $S_2$.

Since $m=3\cdot |S_1|=3\cdot |S_2|$,  there exist vertices $v_i\in S_i$ with $c(v_i)\le 2$ for $i=1,2$.
Choose $v_i$ with $c(v_i)=2$ if possible, and choose
$w_i\in S_i\setminus\{v_i\}$; this is possible because $|S_1|=|S_2|>1$.

If $c(v_1)=c(v_2)=2$, perform
\begin{equation*}
\pmove{v_1}{w_2},\qquad \pmove{v_2}{w_1}.
\end{equation*}

If exactly one of $c(v_1),c(v_2)$ equals $2$, say $c(v_2)=2$, perform
\begin{equation*}
\pmove{v_2}{v_1},\qquad \pmove{v_1}{w_2}.
\end{equation*}

If $c(v_1)=c(v_2)=1$, then there exists $w\in S_j$ for some $j$ with
$c(w)\ge 3$. If $c(w)=2$ then $j\ge 3$. If $c(w)=2$ then  we can assume that  $j\neq 2$.  Perform
\begin{equation*}
\pmove{w}{v_2},\qquad
\pmove{v_2}{v_1},\qquad
\pmove{v_1}{w_2}.
\end{equation*}

In each subcase we obtain a configuration $c'$ with $\supp{c'}=\supp{c}\setminus\{v_1,v_2\}$ and so 
\begin{equation*}
\alpha(c')=\alpha(c)-1,
\qquad
\norm{c'}\ge \norm{c}-3,
\end{equation*}
and $\supp{c'}$ meeting both $V_1$ and $V_2$. Thus, the induction hypothesis
applies.
\end{proof}

\begin{proof}[Proof of Theorem~\ref{tm:multi}]
By Theorem~\ref{tm:alpha-2}, $\stack{G}\ge 3\cdot \alpha(G)+1$, and clearly
$\stack{G}\ge |V(G)|+1$. Proposition~\ref{pr:multi} gives
$\clear{G}\le \max\bigl(|V(G)|+1,\;3\cdot \alpha(G)+1\bigr)$. Hence
\begin{displaymath}
\clear{G}=\stack{G}=\max\bigl(|V(G)|+1,\;3\cdot \alpha(G)+1\bigr).
\end{displaymath}
\end{proof}

\section{The Almost Stacked Hypothesis}
Motivated by Sj\"ostrand's Cover Pebbling Theorem, we introduce the
\emph{Almost Stacked Hypothesis}.
Let $G$ be a graph. A configuration $c$ on $G$ is called
\emph{almost stacked} if there exists a vertex $v_0\in V(G)$ such that
\begin{equation*}
c(u)\le 1 \qquad \text{for all } u\in V(G)\setminus\{v_0\}.
\end{equation*}
We then say that $c$ is \emph{almost stacked at $v_0$}.
We refer to the following statement as the \emph{Almost Stacked Hypothesis}, abbreviated \emph{ASH}.

\begin{hypothesis}[Almost Stacked Hypothesis]
   \label{HY_almoststacked}
Let $G$ be a finite connected graph.

\begin{enumerate}[(1)]
\item The  $\stack{G}$ is the least integer $t\ge2$ such that
every almost stacked configuration in $\conf{G}{t}$ is stackable.

\item If $G$ is non-bipartite, then 
$\clear{G}$ is the least integer $t\ge2$ such that
every almost stacked configuration in $\conf{G}{t}$ is clearable.
\end{enumerate}
\end{hypothesis}

The intended applications of ASH in this paper mainly concern cycles. Assuming ASH, we
aim to prove
\begin{equation*}
\stack{C_{2n}}=2^{n+1}-1,
\qquad
\clear{C_{2n+1}}=3\cdot 2^n-2.
\end{equation*}
Without ASH, Corollaries~\ref{cr:stack_c_2n} and~\ref{cr:c_2n+1} yield only the
lower bounds
\begin{equation*}
\stack{C_{2n}}\ge 2^{n+1}-1,
\qquad
\clear{C_{2n+1}}\ge 3\cdot 2^n-2.
\end{equation*}
\subsection*{Pushing lemmas}

In this subsection 
we prove two technical lemmas, the Pullback Lemma and the Pushforward Lemma,   that will be used later when
we study $\clear{G}$ and $\stack{G}$. 
Assume that $c$ is a configuration on a path $P_k$ which is almost stacked at $v_1$.
The Pullback Lemma says that if $c(v_1)$ is large enough then all the pebbles can be moved to $v_1$.
The Pushforward Lemma says that if $c(v_1)$ is large enough then one can send an extra pebble into
$v_{k}$ and clear all the vertices between $v_1$ and $v_k$.

\newcommand{\pull}[1]{\operatorname{p}(#1)}
\newcommand{\push}[1]{\operatorname{s}(#1)}

\begin{lemma}[Pullback Lemma]\label{lm:pullback}
Assume that $k\ge 2$ and $d:\{v_2,\dots, v_k\}\to 2$. 
Then there is a natural number 
\begin{displaymath}
\pull{d}\le 2^k-2\cdot |\supp d|-2
\end{displaymath}
 such that
\begin{equation*}
\Gmapsto{(\pull{d}\cdot e_{v_1}\oplus d  )}{e_{v_1}}{P_k}.
\end{equation*} 
\end{lemma}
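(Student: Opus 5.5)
The plan is to induct on $k$ and peel off the last vertex $v_k$ one step at a time. Write $s=|\supp d|$; I read $d\colon\{v_2,\dots,v_k\}\to 2$ as a $0$--$1$ configuration. The basic tool is cheap transport. On a path, $2^{j}$ pebbles placed at $v_1$ can send one pebble to $v_{j+1}$ by the standard moves, whatever pebbles already sit in between. Pebbles already lying on the path are never needed as sources, so they simply stay where they are. Every case below is a variant of ``send one pebble forward, then bounce it back so that the last vertex is emptied'', followed by the induction hypothesis on $P_{k-1}$.

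\textbf{Inductive step} (from $P_k$ to $P_{k+1}$). Let $d$ live on $\{v_2,\dots,v_{k+1}\}$.
\begin{enumerate}[(a)]
\item If $d(v_{k+1})=0$, I take $\pull{d}=\pull{d\restriction\{v_2,\dots,v_k\}}$. This gives $\pull{d}\le 2^k-2s-2\le 2^{k+1}-2s-2$.
\item If $d(v_{k+1})=1$ and $d(v_k)=0$, I spend $2^k$ pebbles from $v_1$ to put a second pebble on $v_{k+1}$, then make the move $\pmove{v_{k+1}}{v_k}$. What remains is a $0$--$1$ configuration $d'$ on $\{v_2,\dots,v_k\}$ with $|\supp{d'}|=s$. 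Induction then gives the total $2^k+2^k-2s-2=2^{k+1}-2s-2$.
\item If $d(v_{k+1})=d(v_k)=1$, I spend only $2^{k-1}$ pebbles to add one pebble at $v_k$. Then $\pmove{v_k}{v_{k+1}}$ followed by $\pmove{v_{k+1}}{v_k}$ leaves a single pebble on $v_k$. The support of the new $d'$ is now $s-1$, so the total is $2^{k-1}+2^k-2(s-1)-2$. This is at most $2^{k+1}-2s-2$ exactly when $2^{k-1}\ge 2$, which holds for $k\ge2$.
\end{enumerate}
In every case the new configuration still has the form $\pull{\cdot}\,e_{v_1}\oplus d'$ with $d'$ taking values in $\{0,1\}$ on a path one vertex shorter, so the hypothesis applies. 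The target $e_{v_1}$ is reached exactly, because all the spent pebbles are consumed by the moves.

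\textbf{Base case, and the obstacle.} The main obstacle is the base case, since it is where the ``$-2$'' must be paid for. At $k=2$ with $d(v_2)=1$ the stated bound asks for $\pull{d}\le 0$. However, $e_{v_2}$ cannot reach $e_{v_1}$, and $(1,1)$ is stuck, while $(2,1)\mapsto(0,2)\mapsto(1,0)$ needs $\pull{d}=2$. So before running the induction I would check the small cases, $k=2,3$, by hand. I would use the valuation $R(v_i)=2^{-(i-1)}$ as a lower-bound check; it is a valuating function because $R(v_{i\pm1})\le 2R(v_i)$. The case $k=3$ with $d=e_{v_3}$ looks problematic as well: it seems to force $\pull{d}=6$, whereas the bound allows $4$.

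If these checks confirm that the ``$-2$'' fails for small $k$, then the constant in the statement must be reconciled with the intended use in the Pushforward Lemma before the induction above can be anchored. I would first re-examine whether $\pull{d}$ is meant to count only the pebbles actually consumed beyond the final one. Once the base case is settled, the three-case step above carries the bound upward unchanged.
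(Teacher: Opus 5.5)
Your suspicion is correct. It points to an error in the paper, not a gap in your argument: the statement is false as written, so neither your induction nor the paper's proof can establish it.

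\textbf{The counterexample and its extent.} Your case $k=2$, $d=e_{v_2}$ already decides it: the bound forces $\operatorname{p}(d)=0$, and $e_{v_2}$ admits no move. The failure is also not confined to small $k$, so it cannot be repaired at the base and then propagated. For $d=e_{v_k}$ the least admissible value is exactly $2^k-2$ for every $k\ge2$, whereas the bound allows only $2^k-4$.

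For the lower bound, trace the history of the final pebble as in Lemma~\ref{lm:partition}. It is the root of a binary tree whose leaves are the initial pebbles, and the two children of a node at $x$ lie together at a neighbour of $x$. Follow the branch from the root to the leaf at $v_k$. Each step into a vertex $v_j$ splits off a sibling subtree rooted at $v_j$ whose leaves all lie on $v_1$. By the valuation $R(v_i)=2^{i-1}$, such a subtree has at least $2^{j-1}$ leaves. The branch enters every $v_j$ with $2\le j\le k$, so at least $\sum_{j=2}^{k}2^{j-1}=2^k-2$ pebbles are needed on $v_1$.

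\textbf{The paper's proof.} It is a direct construction. With $j$ the largest index such that $d(v_j)=1$, it puts one pebble on each empty $v_i$ with $2\le i<j$, adds a second pebble on $v_j$, and cascades back to $v_1$. The final inequality $\operatorname{p}(d)\le 2^k-2|\supp{d}|-2$ is merely asserted, and the accounting is inconsistent. The text charges $2^i$ pebbles to reach $v_i$ and the formula charges $2^j$ for $v_j$, where $2^{i-1}$ and $2^{j-1}$ suffice. With the printed formula, $d=e_{v_k}$ receives $2^k+2^{k-1}-2$. Counted correctly, the construction costs $2^j-2-\sum_{i\in J\setminus\{j\}}2^{i-1}\le 2^k-2|\supp{d}|$ for $d\ne0$, where $J=\{i: d(v_i)=1\}$.

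\textbf{What is true, and how your induction proves it.} The correct bound is $\operatorname{p}(d)\le 2^k-2|\supp{d}|$, and it is sharp for $d=e_{v_k}$. The version with $-2$ does hold when $d(v_k)=0$, which may be what was intended. Your three-case induction proves the corrected bound verbatim once anchored at $k=2$ with $\operatorname{p}(0)=1$ and $\operatorname{p}(e_{v_2})=2$. Case~(b) gives exactly $2^{k+1}-2s$. Case~(c) gives $2^{k-1}+2^k-2s+2\le 2^{k+1}-2s$, which again holds precisely because $k\ge2$. The two routes produce the same cost for every $d$. Your induction isolates all the bookkeeping in that one inequality, while the construction yields an explicit cost formula.

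\textbf{Smaller points.}
\begin{enumerate}[(1)]
\item The valuation $R(v_i)=2^{-(i-1)}$ cannot certify your $k=3$ example, since it yields only $\operatorname{p}(d)\ge1$. Instead, the Imbalance Invariance Lemma~\ref{lm:imbalance} forces $3\mid\operatorname{p}(d)$. That leaves only $0$ and $3$ below the bound: from $(0,0,1)$ no move is possible, and $(3,0,1)$ reaches only $(1,1,1)$, which is stuck.
\item Reinterpreting $\operatorname{p}(d)$ as the number of pebbles consumed beyond the final one does not help. For $d=e_{v_k}$ it shifts the count by at most one, while the excess is two.
\item The lemma is used in Theorems~\ref{tm:ash-even-cycle-upper} (Case~1) and~\ref{tm:clearoddcircuit} (Cases~1 and~3), not in the Pushforward Lemma. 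Those are the places where the weakened constant must be rechecked.
\end{enumerate}
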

\begin{proof}
Let $J=\{j: 2\le j\le k: d(j)=1\}$, $j=\max (J\cup\{1\})$ and 
\begin{equation}\label{eq:MM}
p(d)=2^j+\sum\{2^{i-1}: 2\le i<j\land   i\notin J \}.
\end{equation} 
First push one pebble to $v_i$ for $2\le i<j$ with $i\notin J$ using $2^i$ pebbles from $v_1$.
Then put one pebble  on $v_j$ using $2^j$ pebbles from  $v_1$. 
After these steps we have a configuration $c'=(0,(1)_{j-2},2,(0)_{k-j})$
and $\Gmapsto{c'}{e_1}{P_k}$.    
We need at most $p(d)$ pebbles in $v_1$ and $p(d)\le 2^k-2\cdot |\supp{d}|-2$.
\end{proof}

\begin{lemma}[Pushforward Lemma]\label{lm:Pushforward}
Assume that $k\ge 2$ and $d:\{v_2,\dots, v_{k-1}\}\to 2$.
Then there is a natural number 
\begin{displaymath}
\push{d}\le 2^k-2\cdot |\supp d|
\end{displaymath}
 such that
\begin{equation*}
\Gmapsto{(\push{d}\cdot e_{v_1}\oplus d  )}{e_{v_k}}{P_k}.
\end{equation*} 

\end{lemma}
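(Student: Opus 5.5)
The plan is to give an explicit construction that mirrors the proof of the Pullback Lemma. The pebbles of $d$ act as ``relays''. First we fill every empty vertex strictly between $v_1$ and $v_k$ with exactly one pebble. Then two further pebbles on $v_1$ trigger a cascade that carries one pebble all the way to $v_k$ and leaves every other vertex empty.

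Let $J=\supp{d}\subseteq\{v_2,\dots,v_{k-1}\}$ and define
\begin{equation*}
\push{d}=2+\sum\{2^{i-1}: 2\le i\le k-1,\ v_i\notin J\}.
\end{equation*}

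\textbf{Step 1 (filling the gaps).} For each index $i$ with $2\le i\le k-1$ and $v_i\notin J$, we use $2^{i-1}$ pebbles from $v_1$ to place a single pebble on $v_i$. This is the standard transfer along $v_1,v_2,\dots,v_i$: the $2^{i-1}$ pebbles are pushed in halves, so each intermediate vertex $v_j$ receives exactly $2^{i-j}$ pebbles and passes all of them on. The pebbles already sitting on intermediate vertices are never used, so each intermediate vertex ends with the same count it had before. Carrying out these transfers one after another leaves the configuration with $2$ pebbles on $v_1$, exactly one pebble on each of $v_2,\dots,v_{k-1}$, and none on $v_k$.

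\textbf{Step 2 (the cascade).} Perform $\pmove{v_1}{v_2}$. Now $v_2$ carries $2$ pebbles, so perform $\pmove{v_2}{v_3}$, and continue in this way up to $\pmove{v_{k-1}}{v_k}$. Each move empties its source vertex and brings the next vertex up to $2$ pebbles, except that $v_k$ receives exactly one pebble. The result is $e_{v_k}$. The degenerate case $k=2$ is simply $2e_{v_1}\mapsto e_{v_2}$.

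\textbf{Step 3 (the bound).} We have
\begin{equation*}
\push{d}=2+\bigl(2^{k-1}-2\bigr)-\sum_{v_i\in J}2^{i-1}=2^{k-1}-\sum_{v_i\in J}2^{i-1}.
\end{equation*}
Since every index satisfies $i\ge 2$, each term $2^{i-1}$ is at least $2$. Hence $\push{d}\le 2^{k-1}-2\cdot|\supp{d}|\le 2^k-2\cdot|\supp{d}|$, which is the required bound.

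The only point needing care is Step 1: the transfers pass through vertices that already hold a pebble, and we must check that those pebbles are neither consumed nor multiplied. This is handled by always moving exactly the incoming $2^{i-j}$ pebbles out of $v_j$. Because of this, the order in which the gaps are filled does not matter.
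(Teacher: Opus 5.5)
Your proof is correct and follows the paper's argument: the same choice $\push{d}=2+\sum\{2^{i-1}: 2\le i\le k-1,\ v_i\notin\supp{d}\}$, filling each empty interior vertex from $v_1$, and then the cascade from $(2,1,\dots,1,0)$ to $e_{v_k}$. You go further than the paper in two places: you spell out the cascade and the bound computation, which the paper leaves implicit, obtaining the sharper estimate $\push{d}\le 2^{k-1}-2\cdot|\supp{d}|$; and you correctly charge $2^{i-1}$ pebbles per gap, whereas the paper's text says $2^i$ while its formula uses $2^{i-1}$.
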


\begin{proof}
Let $J=\{j: 2\le j\le k: d(j)=1\}$,  and 
\begin{equation}\label{eq:MMM}
s(d)=2+\sum\{2^{i-1}: 2\le i<k\land   i\notin J \}.
\end{equation} 
Assume that we have $s(d)$ pebbles on $v_1$.
   First push one pebble to $v_i$ for $2\le i<k$ with $i\notin J$ using $2^i$ pebbles from $v_1$.
   After this step we have a configuration $c'=(2,(1)_{k-2},0)$
and $\Gmapsto{c'}{e_{v_k}}{P_k}$
\end{proof}

\begin{theorem}\label{tm:ash-even-cycle-upper}
For every $n\ge 2$, 
an almost stacked configuration $c$ on $C_{2n}$ is stackable 
provided
\begin{displaymath}
\norm{c}\ge 2^{n+1}-1.
\end{displaymath}
\end{theorem}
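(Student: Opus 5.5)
Let $c$ be almost stacked at $v_0$ on $C_{2n}$ with $\norm{c}\ge 2^{n+1}-1$. Write $a=c(v_0)$ and let $D=\supp{c}\setminus\{v_0\}$, so $c=a\cdot e_{v_0}\oplus d$ with $d\le 1$ off $v_0$. Label the cycle $v_0,v_1,\dots,v_{2n-1}$ and let $x=v_n$ be the antipode of $v_0$. The two paths $A=v_0v_1\cdots v_n$ and $B=v_0v_{2n-1}\cdots v_n$ each have $n+1$ vertices. They share only $v_0$ and $x$.

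The plan is to stack everything at $v_0$ by clearing the two arcs separately with the Pullback Lemma (Lemma~\ref{lm:pullback}). The antipode $x$ needs special handling. If $c(x)=0$, we apply Lemma~\ref{lm:pullback} to the arc $A$ minus $x$, which is a path on $n$ vertices. It costs $p(d_A)\le 2^{n}-2|D_A|-2$ pebbles from $v_0$ and leaves one pebble at $v_0$. The same holds for $B$ minus $x$. Since the moves on the two arcs use disjoint edges, they can be done one after the other, with the single leftover pebbles recombining at $v_0$.

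Concretely, we count pebbles. If $a\ge p(d_A)+p(d_B)$, then $v_0$ ends with
\[
a-p(d_A)-p(d_B)+2\ \ge\ 1
\]
pebbles and nothing elsewhere, so the result is stacked. Using $a=\norm{c}-|D|\ge 2^{n+1}-1-|D|$ and the bounds above, we need
\[
2^{n+1}-1-|D|\ \ge\ 2^{n+1}-2|D|-4,
\]
which always holds. One subtlety is the order of operations: one arc must be pulled back first while $v_0$ retains its budget for the other arc. This is fine, because the Pullback Lemma only consumes $p(d)$ pebbles from $v_0$ and returns one.

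If $c(x)=1$, we apply the Pullback Lemma to the full arc $A$ (path $P_{n+1}$ ending at $x$), which absorbs the pebble on $x$. The cost is at most $2^{n+1}-2|D_A|-2$, where $D_A$ includes $x$. Arc $B$ minus $x$ costs at most $2^n-2|D_B|-2$. Summing the two costs gives up to about $3\cdot 2^n$, which may exceed $a$, so this naive count fails. This is the main obstacle.

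To overcome it, I will use the slack in the explicit formula \eqref{eq:MM}. When $x$ is occupied, pulling it back along $A$ costs $2^{n}$ (placing a pebble at distance $n$) plus $2^{i-1}$ for each empty interior vertex. Then arc $B$ can be handled via the Pushforward Lemma (Lemma~\ref{lm:Pushforward}) in reverse orientation, or more efficiently as follows. First push a pebble from $v_0$ to $x$ along whichever arc is cheaper. Now $x$ has $2$ pebbles, so one move sends a pebble to a neighbour, and the process cascades back. In effect, we treat the occupied $x$ as a second source and split into cases according to whether $x$ is closer, counting occupied vertices, via $A$ or via $B$.

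The cleanest route is to choose the arc, say $A$, with more pebbles in its interior, $|D_A|\ge|D_B|$. The combined cost is then bounded by
\[
2^n+\sum_{i\notin D_A}2^{i-1}+\sum_{i\notin D_B}2^{i-1}+O(1).
\]
The two geometric sums each contribute less than $2^{n-1}$ minus the savings from the occupied vertices. This should give a total of at most $2^{n+1}-1-|D|$. The key accounting fact is that every pebble in $D$ saves at least $2$ from the budget, while costing only $1$ from $\norm{c}$. I expect the boundary case where $D=\{x\}$ alone to be tight, with $a=2^{n+1}-2$. There we directly send $2^n$ pebbles to $x$ along $A$, so $x$ has $2$ pebbles, then return one pebble back along $A$ or $B$ using the remaining $2^n-2$ pebbles on $v_0$. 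This needs to be verified by hand and may need $n\ge2$, which the statement assumes.
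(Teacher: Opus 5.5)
Your case $c(x)=0$ is the paper's Case~1: two pullbacks on the copies $A\setminus\{x\}$ and $B\setminus\{x\}$ of $P_n$. It is fine. Incidentally, the bound $2^k-2|\supp{d}|-2$ stated in Lemma~\ref{lm:pullback} is too strong by $2$: for $k=2$ and $d=e_{v_2}$ one needs $p=2$. The true bound for $d\neq 0$ is $2^k-2|\supp{d}|$. With it, each nonempty arc costs at most $2^n-2|D_A|$ (resp.\ $2^n-2|D_B|$). The total $2^{n+1}-2|D|$ is still at most $2^{n+1}-1-|D|\le a$ when $D\neq\emptyset$, so your count survives.

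\textbf{The gap is the case $c(x)=1$.} You do not actually prove it: the argument ends with ``should give'' and ``needs to be verified by hand'', and the bound you write down is wrong.
\begin{itemize}
\item In your labelling, placing a pebble at distance $i$ from $v_0$ costs $2^i$, not $2^{i-1}$. The exponent $i-1$ in \eqref{eq:MM} belongs to the paper's indexing with root $v_1$. So each hole-filling sum can be as large as $2^n-2$, not below $2^{n-1}$.
\item The separate term $2^n$ for reaching $x$ is double-counted. Once every interior vertex of $A$ carries a pebble, two more pebbles at $v_0$ deliver a pebble to $x$ by a cascade.
\end{itemize}
Counted correctly, your expression is again about $3\cdot 2^n$, i.e.\ the naive count you had already rejected. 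An unspecified $O(1)$ also has no place in a sharp threshold argument.

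\textbf{The missing idea is to make the outbound trip a pushforward.} Let $I_A,I_B$ be the occupied interior vertices of $A,B$, so that $|D|=|I_A|+|I_B|+1$ and $a=\norm{c}-|D|$. Apply Lemma~\ref{lm:Pushforward} on $A\cong P_{n+1}$, using the count \eqref{eq:MMM}, which gives $s(d)\le 2^{k-1}-2|\supp{d}|$. Then at most $2^n-2|I_A|$ pebbles from $v_0$ clear the interior of $A$ and put a second pebble on $x$.

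From here there are two ways to finish.
\begin{itemize}
\item \emph{The paper's route.} Fire $x\to v_{n+1}$ (in your labelling). The remaining configuration lives on $B\setminus\{x\}\cong P_n$ and has size $\norm{c}-s(d_A)-|I_A|\ge 2^n-1$, so Theorem~\ref{tm:pn} finishes.
\item \emph{Your return-trip idea, counted properly.} This avoids Theorem~\ref{tm:pn}. First fill the empty interior vertices of $B$, at cost at most $2^n-2-2|I_B|$. Then push forward along $A$. Then cascade $x\to v_{n+1}\to\cdots\to v_{2n-1}\to v_0$. The total cost is at most $2^{n+1}-2-2|I_A|-2|I_B|=2^{n+1}-2|D|$. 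This is at most $a$, because $a\ge 2^{n+1}-1-|D|$ and $|D|\ge 1$. So $v_0$ ends with at least one pebble and all other vertices are empty.
\end{itemize}
Your boundary case $D=\{x\}$ is exactly the equality case of this count.
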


\begin{proof}
Let $G=C_{2n}$, and write
\begin{displaymath}
V(G)=\{v_1,v_2,\dots,v_{2n}\}
\end{displaymath}
in cyclic order. By symmetry we may assume that $c$ is almost stacked at
$v_1$. Put
\begin{displaymath}
U=\{v_1,v_2,\dots,v_{n+1}\},
\qquad
W=\{v_1,v_{2n},v_{2n-1},\dots,v_{n+1}\}.
\end{displaymath}
and for $H\in \{U, W\}$  take 
\begin{displaymath}
H'=H\setminus\{v_{n+1}\},
\qquad
H''=H\setminus\{v_1,v_{n+1}\}.
\end{displaymath}
Then $G[U]\cong G[W]\cong P_{n+1}$,  $G[U']\cong G[W']\cong P_n$
and $G[U'']\cong G[W'']\cong P_{n-1}$.

We distinguish two cases.

\medskip
\noindent\textbf{Case 1: $c(v_{n+1})=0$.}

Let $H\in\{U,W\}$. 
Since $G[H']\cong P_n$ and 
$c\restriction H'$ is almost stacked at $v_1$,
we can apply the Pullback Lemma~\ref{lm:pullback}
to yield
\begin{displaymath}
p_H=\pull{c\restriction H''}\le 2^n-2\cdot |\supp{c\restriction H''}|-2
.
\end{displaymath}
Set
$
s_H=|\supp{c\restriction H''}|.
$

Because $c(v_{n+1})=0$ and $c$ is almost stacked at $v_1$, every vertex in
$U''\cup W''$ carries at most one pebble. Hence
\begin{displaymath}
c(v_1)=\norm{c}-s_U-s_W\ge 2^{n+1}-1-s_U-s_W.
\end{displaymath}
On the other hand,
\begin{displaymath}
p_U+p_W
\le (2^n-2\cdot s_U-2)+(2^n-2\cdot s_W-2)
=2^{n+1}-2\cdot s_U-2\cdot  s_W-4.
\end{displaymath}
Therefore
\begin{displaymath}
c(v_1)\ge p_U+p_W.
\end{displaymath}
In particular, if we set
\begin{displaymath}
\ell=c(v_1)-p_U-p_W,
\end{displaymath}
then $\ell\ge 0$.

We now first perform the pebbling sequence on $G[U']$
which witnesses 
\begin{displaymath}
\Gmapsto{p_U\cdot e^{}_{v_1}\oplus c\restriction U''}{e^{}_{v_1}}{G[U']}.
\end{displaymath}
Then perform the pebbling sequence on $G[W']$
which witnesses 
\begin{displaymath}
\Gmapsto{p_W\cdot e^{}_{v_1}\oplus c\restriction W''}{e^{}_{v_1}}{G[W']}.
\end{displaymath}
These pebbling steps use $p_U+p_W$ pebbles from 
$v_1$ and put $2$ pebbles on $v_1$ and clear all the other vertices.  So after that the obtained configuration 
$c'$ is stacked at $v_1$.
Therefore $c$ is
stackable.

\medskip

\noindent\textbf{Case 2: $c(v_{n+1})=1$.}

By the Pushforward Lemma \ref{lm:Pushforward},
\begin{equation*}
s_U=\push{c\restriction U''}\le 2^n-\supp{c\restriction U''}\quad \text{ and }\quad   
\Gmapsto{s_U\cdot e^{}_{1}\oplus (c\restriction U'')}{e^{}_{n+1}}{G[U]}.
\end{equation*}
That is, using $s_U$ pebbles from $v_1$ we can clear all the pebbles on $v_2,\dots, v_n$ 
and put an extra pebble on $v_{n+1}$. 
Carrying out these pebble steps in the original configuration $c$, 
we obtain a $c'\in \Pebl{c}{}$ such that 
\begin{enumerate}[(i)]
\item $c'\restriction W''=c\restriction W''$,
\item $c'(v_{n+1})=2$,
\item $c'(v_1)=c(v_1)-s_U$. 
\end{enumerate}

 Next, using the pebble move $\pmove{v_{n+1}}{v_{n+2}}$ we can clear the vertex $v_{n+1}$
 and we obtain the configuration $c^*$ from $c'$.
Then,   
\begin{equation*}
\norm{c^*}=\norm{c'}-1=(\norm{c}- s_U - |\supp{c\restriction U''}|)-1
\ge 2^{n+1} -2^n-1=2^n-1, 
\end{equation*}
and $\supp{c^*}\subs W'$.
So we can  
apply  Theorem~\ref{tm:pn}:  
since $G[W']\equiv P^n$, 
every configuration of size at least $2^n-1$ on
$G[W']$ is stackable. Therefore $c^*$, and hence $c$, is stackable.

This completes the proof.
\end{proof}

\begin{theorem}\label{tm:clearoddcircuit}
If $n\ge 1$, then an almost stacked configuration $c$ on $C_{2n+1}$ is clearable provided
\begin{displaymath}
\norm{c}\ge 3\cdot 2^n-2.
\end{displaymath}
\end{theorem}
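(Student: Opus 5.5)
The plan is to follow the proof of Theorem~\ref{tm:ash-even-cycle-upper}. We play the two halves of the cycle against each other and finish on the antipodal edge $v_{n+1}v_{n+2}$. This is the edge inside the last layer $U_n$ that the proof of Theorem~\ref{tm:pebblingodd} singles out as the place where any clearing must use an odd closed walk.

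\textbf{Setup.} Write $V(C_{2n+1})=\{v_1,\dots,v_{2n+1}\}$ in cyclic order, and assume by symmetry that $c$ is almost stacked at $v_1$. Put
\begin{displaymath}
U=\{v_1,\dots,v_{n+1}\},\qquad W=\{v_1,v_{2n+1},\dots,v_{n+2}\},
\end{displaymath}
so that $G[U]\cong G[W]\cong P_{n+1}$. Let $U'',W''$ be the interiors of these paths, and let $s_U,s_W$ be the numbers of pebbles on $U'',W''$. The target configuration just before the end is
\begin{displaymath}
2e_{v_{n+1}}\oplus e_{v_{n+2}},
\end{displaymath}
with every other vertex empty, including $v_1$. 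From it the moves $\pmove{v_{n+1}}{v_{n+2}}$ and then $\pmove{v_{n+2}}{v_{n+1}}$ leave a single pebble.

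\textbf{Steps.}
\begin{enumerate}[(1)]
\item Apply the Pushforward Lemma~\ref{lm:Pushforward} on $G[W]$. At a cost of at most $2^{n}-2s_W$ pebbles from $v_1$ (measured on the path of length $n$ ending at $v_{n+1}$'s neighbour side), it clears $W''$ and adds one pebble at $v_{n+2}$.
\item Apply the Pushforward Lemma on $G[U]$, once or twice according to whether $c(v_{n+1})$ is $1$ or $0$. This clears $U''$ and brings $v_{n+1}$ up to two pebbles.
\end{enumerate}
With the pebbles already on the interiors counted as savings, as in Case~2 of Theorem~\ref{tm:ash-even-cycle-upper}, the total cost from $v_1$ should be at most $3\cdot 2^n-2(s_U+s_W)-O(1)$. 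Since $c(v_1)=\norm{c}-s_U-s_W-c(v_{n+1})-c(v_{n+2})$, the hypothesis $\norm{c}\ge 3\cdot2^n-2$ should leave a nonnegative surplus $\ell$ on $v_1$. The cases split on the values of $c(v_{n+1})$ and $c(v_{n+2})$, which lie in $\{0,1\}$. When $c(v_{n+2})=1$ already, the $W$-push is replaced by a Pullback Lemma~\ref{lm:pullback} clearing of $W'$ back to $v_1$, which is cheaper.

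\textbf{Main obstacle.} The hard step is disposing of the surplus $\ell$ on $v_1$, because the final configuration must be empty there. I will remove it before the pushes. The move triple $\pmove{v_1}{v_2},\pmove{v_1}{v_2},\pmove{v_2}{v_1}$ lowers the stack at $v_1$ by $3$ and keeps everything else unchanged. A single $\pmove{v_1}{v_2}$ instead deposits a pebble on the interior, where it later counts as a saving for the Pushforward Lemma. Combining these two devices, I would first adjust $c(v_1)$ so that it equals exactly the cost of the pushes, or exceeds it by $1$. In the latter case the last pebble on $v_1$ is absorbed by pulling it into the final stack at $v_{n+1}$ via the Pullback Lemma applied to $U'$. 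This requires the residue of $c(v_1)$ modulo $3$ to be compatible, so I expect to check a few residue cases by hand.

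The bound $3\cdot2^n-2$ is tight, since the stack of size $3\cdot2^n-3$ fails by the Claim in Theorem~\ref{tm:pebblingodd}. So the only room is the difference between $3\cdot2^n-3$, which is divisible by $3$, and the sizes $3\cdot2^n-2$ and $3\cdot2^n-1$, which are not. I expect the bookkeeping to work precisely because of the imbalance obstruction of Lemma~\ref{lm:imbalance}, here applied to the path obtained by deleting the edge $v_{n+1}v_{n+2}$. That is, I would mimic the choice of the deleted edge in Lemma~\ref{lm:odd-cycle-clearing}. For larger $\norm{c}$, I would first waste pebbles in groups of $3$ until $\norm{c}\in\{3\cdot2^n-2,\,3\cdot2^n-1,\,3\cdot2^n\}$. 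In the last of these three cases, a single extra move to $v_2$ avoids landing on a multiple of $3$.
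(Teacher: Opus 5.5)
Your plan has a genuine gap: the single target $2e_{v_{n+1}}\oplus e_{v_{n+2}}$ is reachable in only one residue class. Put $R(v)=2^{d(v_1,v)}$ and $e=v_{n+1}v_{n+2}$. Every move you make before the final two uses an edge of the path $G-e$. There a move has $R$-loss $0$ (away from $v_1$) or $3\cdot 2^{d-1}$ (toward $v_1$), so $w_R \bmod 3$ is invariant; this is \eqref{eq:imbw}. Your target has $w_R=3\cdot 2^n\equiv 0$, so the plan can succeed only when $w_R(c)\equiv 0\pmod 3$.

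Your adjustment devices cannot change this. After the intended pushes, the leftover on $v_1$ is always $\equiv w_R(c)\pmod 3$. The triple at $v_1$ lowers it by exactly $3$. The move $\pmove{v_1}{v_2}$ is $R$-neutral, since the two pebbles it takes from $v_1$ are exactly the two it later saves in a push. A single leftover pebble on $v_1$ cannot move at all: the Pullback Lemma carries pebbles \emph{to} $v_1$, and $U'$ does not even contain $v_{n+1}$.

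The simplest failure is $c=(3\cdot 2^n-2)e_{v_1}$. Here the pushes alone cost $3\cdot 2^n$ and the residue is wrong, yet $c$ clears trivially by triples at $v_1$. Your residue bookkeeping also tracks $\norm{c}$, whereas the invariant is $w_R(c)$. For a stack at $v_1$ the two coincide, and the case $\norm{c}\equiv 0$ that you try to avoid is precisely the one in which your target \emph{is} reachable.

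What is missing is the case split on $w_R(c)\bmod 3$ that the paper's proof rests on, with the finish chosen by residue. Orient the cycle so that $c(v_{n+1})\le c(v_{n+2})$.

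If $c(v_{n+1})=0$ and $w_R(c)\not\equiv 0$, the odd edge is not needed. The Pullback Lemma on $G[U']$ and on $G[W]$ gives a stack at $v_1$ of size $\equiv w_R(c)\not\equiv 0$, and triples reduce it to one pebble.

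If $c(v_{n+1})=0$ and $w_R(c)\equiv 0$, your finish on $e$ is the right one. Lossless pushes reach $2e_{v_{n+1}}\oplus e_{v_{n+2}}$ or $e_{v_{n+1}}\oplus 2e_{v_{n+2}}$, according as $c(v_{n+2})$ is $0$ or $1$. They leave exactly $w_R(c)-3\cdot 2^n$ on $v_1$. This is automatically a nonnegative multiple of $3$, because $w_R(c)\ge\norm{c}\ge 3\cdot 2^n-2$, so nothing needs adjusting.

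If $c(v_{n+1})=c(v_{n+2})=1$, one crossing of $e$ shifts $w_R$ by $2^n$. For $w_R(c)\not\equiv 2^n$, push a pebble into $v_{n+2}$, cross to $v_{n+1}$, cascade back, and pull everything to a stack at $v_1$ of size $\equiv w_R(c)-2^n\not\equiv 0$. For $w_R(c)\equiv 2^n$, an antipodal finish must start from weight $\equiv 2^n$. One such start is $e_{v_{n+1}}\oplus 3e_{v_{n+2}}$, which three moves across $e$ clear. You reach it by pulling $U''$ back to $v_1$ and pushing twice into $v_{n+2}$ along $W$.

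Be aware that the paper's written Case~4 slips at this point. Its three pushes give $2e_{v_{n+1}}\oplus 3e_{v_{n+2}}$, of weight $5\cdot 2^n$, which has the wrong residue. Its bound $\ell\ge 2^n$ also fails for $8e_{v_1}\oplus e_{v_3}\oplus e_{v_4}$ on $C_5$.
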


\begin{proof}
If $n=1$, then $C_3\cong K_3$, and Theorem~\ref{tm:complete-graph} gives
\begin{displaymath}
\clear{C_3}=4=3\cdot 2^1-2.
\end{displaymath}
So we may assume that $n\ge 2$.

Let $G=C_{2n+1}$ with vertices
\begin{displaymath}
V(G)=\{v_1,v_2,\dots,v_{2n+1}\}
\end{displaymath}
in cyclic order, and let $c$ be an almost stacked configuration on $G$ with
\begin{displaymath}
\norm{c}\ge 3\cdot 2^n-2.
\end{displaymath}
By symmetry we may assume that $c$ is almost stacked at $v_1$, and after
reversing the cyclic order if needed we may also assume
\begin{displaymath}
c(v_{n+1})\le c(v_{n+2})
.
\end{displaymath}
Let
\begin{displaymath}
U=\{v_1,v_2,\dots,v_{n+1}\},
\qquad
W=\{v_1,v_{2n+1},v_{2n},\dots,v_{n+2}\},
\end{displaymath}
and
\begin{displaymath}
e=v_{n+1}v_{n+2}.
\end{displaymath}
Then
\begin{displaymath}
G[U]\cong G[W]\cong P_{n+1},
\end{displaymath}
and $e$ is the unique edge of $G$ not contained in either $G[U]$ or
$G[W]$.

Let 
\begin{equation*}
I_U=\{v_i\in U\setm \{v_1\}: c(v_{i})=1\}
\text{ and }I_W=\{v_i\in W\setm \{v_1\}: c(v_{i})=1\}.
\end{equation*}

Define a valuation     $w_R$ on $\conf{G}{}$ by
\begin{displaymath}
R(v)=2^{d(v_1,v)} \qquad (v\in V(G)).
\end{displaymath}
Its restrictions to $G[U]$ and $G[W]$ are the standard path valuations with
root $v_1$. Since every vertex different from $v_1$ carries at most one
pebble, we have
\begin{equation}\label{eq:cUIW}
\norm{c\restriction U}\ge 3\cdot 2^n-2-|I_W|\text { and }
\norm{c\restriction W}\ge 3\cdot 2^n-2-|I_U|.
\end{equation}

Let
\begin{displaymath}
X=\{v\in V(G): d(v_1,v)\ \text{is even}\},
\qquad
Y=V(G)\setminus X.
\end{displaymath}
that is, 
\begin{equation*}
X=\{v_1,v_3,\dots \}\cup\{v_{2n},v_{2n-2},\dots\}\text{ and }
Y=\{v_2,v_4,\dots \}\cup\{v_{2n+1},v_{2n-1},\dots\}.
\end{equation*}
Then $G-e$ is bipartite with bipartition $(X,Y)$. Moreover, for every
configuration $d$ on $G$,
\begin{equation}\label{eq:imbw}
\imb(d)
=\sum_{v\in V(G)}(-1)^{d(v_1,v)}d(v)
\equiv \sum_{v\in V(G)}2^{d(v_1,v)}d(v)
=w_R(d)\pmod3,
\end{equation}
because $2^m\equiv (-1)^m\pmod3$ for every $m$. Hence every pebbling
sequence in $G-e$ preserves $w_R$ modulo $3$.

Write 
\begin{displaymath}
v_U=v_{n+1}\qquad \text{ and }\qquad v_W=v_{n+2},
\end{displaymath}
and for $H\in \{U,W\}$ 
let
\begin{displaymath}
H'=H\setm \{v_H\}, \quad 
H''=H\setm \{v_1, v_H\}, \quad 
H^*=H\setm \{v_1\}, \quad 
\end{displaymath}
Then 
\begin{displaymath}
\text{$G[H']\cong G[H^*]\cong P_n$
and $G[H'']\cong P_{n-1}$}
\end{displaymath}

\medskip

We should distinguish four cases. 

\noindent{\bf Case 1.}    $c(v_{n+1})=0$ and $w_R(c)\not\equiv 0 \mod 3$. 

\smallskip
The plan is first to use the Pullback Lemma twice: first for
$G[U']$, then for $G[W]$. 

Let \begin{displaymath}
p_{U}=\pull{c\restriction U''}\text{ and }
p^*_{W}=\pull{c\restriction W^*}.
\end{displaymath}

By Lemma \ref{lm:pullback},
\begin{equation}\label{eq:pu}
p_U\le 2^n-2 \quad\text{ and }\quad
\Gmapsto{(p_U\cdot e^{}_1\oplus c\restriction U'')}{e^{}_{v_1}}{G[U']}
\end{equation}
and 
\begin{equation}\label{eq:pw}
p^*_W\le 2^{n+1}-2 \quad\text{ and }\quad
\Gmapsto{(p^*_W\cdot e^{}_{v_1}\oplus c\restriction W)}{e^{}_1}{G[W]}.
\end{equation}
That is, using $p_U$ pebbles from $v_1$ 
we can clear all the pebbles on vertices in $U''$   
and put an extra pebble on $v_{1}$,
and using $p^*_W$ pebbles from $v_1$ 
we can clear all the pebbles on vertices in $W^*$ 
and put an extra pebble on $v_{1}$,

Since $p_U+p^*_W\le 3\cdot 2^n-2\le  c(v_1) $,
 we have enough pebbles to carry out these pull-backs.

Consider the final configuration $c'$ which is stacked at $v_1.$
Since we did not use the edge $e$, 
 
\begin{displaymath}
c'(v_1)  =w_R(c')\equiv w_R(c)\ne 0 \mod 3.
\end{displaymath}
  So $c'$ is clearable. 

\medskip

\noindent{\bf Case 2.}    $c(v_{n+1})=0$ and $w_R(c)\equiv 0 \mod 3$. 

We want to use the Pushforward Lemma \ref{lm:Pushforward} two or three times. 
For $H\in\{U,W\}$ let 
\begin{equation*}
c_H=c\restriction H''\text{ and } s_H=\push{c_H}. 
\end{equation*}
Then 
\begin{equation}\label{eq:susw}
s_H\le 2^n-2\cdot \supp{c_H}\quad \text{ and }\quad 
\Gmapsto{(s_H\cdot e_1\oplus c_U)}{e_{v_H}}{G[H]}
\end{equation}
Observe that 
\begin{equation}\label{eq:third}
\ell =c(v_1)-(s_U+s_W)\ge 2^n.
\end{equation}
Indeed,
if $\supp{c}=\{v_1\}$, then $c(v_1)=w_R(c)\equiv 0 \mod 3$,
so $c(v_1)\ge 3\cdot 2^n$. 
If $\supp{c}\ne \{v_1\}$, then 
\begin{equation*}
s_U+s_W\le  (2^n- 2\cdot|\supp{c_U}|)+(2^n- 2\cdot |\supp{c_W}|)\le 2\cdot 2^n-2.
\end{equation*}
Hence \eqref{eq:third} always holds. 

\medskip

Since $c=(s_U+s_W+\ell)\cdot e_{v_1}\oplus c_U\oplus   c_W\oplus c(v_{n+2})\cdot e_{v_{n+2}} $,
by \eqref{eq:susw} we have 
\begin{equation}\label{eq:x}
0\equiv w_R(c)\equiv 2\cdot 2^n+ c(v_{n+2})\cdot 2^n+\ell  
\mod 3.
\end{equation}

\noindent{\bf 
Case 2.a.} $c(v_{n+2})=1$.

Then $\ell \equiv 0 \mod 3$ by \eqref{eq:x}.

We do the following: first we remove $\ell $ pebbles 
from $v_1$ using blocks of 3. After that 
we have a configuration $c^*$
such that 
\begin{displaymath}
c^*= (s_U\cdot e{_{v_1}}\oplus c_U)\oplus
(s_W\cdot e_{v_1}\oplus c_W)\oplus e_{v_{n+2}}
\end{displaymath} 

Then 
\begin{displaymath}
\Gmapsto{c^*}{2\cdot e_{v_{n+2}}\oplus e_{v_{n+1}}}{G},
\end{displaymath}
and the configuration $2\cdot e_{n+2}\oplus e_{n+1}$ is clearable.

\noindent {\bf 
Case 2.b. $c(v_{n+2})=0$.
}

Let 
$\ell^*=\ell-2^n\ge 0$.
Then 
$0\equiv \ell^*
\mod 3$
by \eqref{eq:x}

We do the following: first we remove $\ell^* $ pebbles 
from $v_1$ using blocks of 3. After that 
we have a configuration $c^*$
such that 
\begin{displaymath}
c^*= (s_U\cdot e_{v_1}\oplus c_U)\oplus
(s_W\cdot e_{v_1}\oplus c_W)\oplus 2^n\cdot e_{v_{1}}.
\end{displaymath}

We can use  $2^n$ pebbles from $v_1$ to push
an extra pebble into $v_{n+1}$. 
Hence 
\begin{displaymath}
\Gmapsto{c^*}{2\cdot e_{v_{n+2}}\oplus e_{v_{n+1}}}{G},
\end{displaymath}
and $2\cdot e_{v_{n+2}}\oplus e_{v_{n+1}}$ is clearable. 
So $c$ is clearable as well.  

\medskip

\noindent\textbf{Case 3:
$c(v_{n+1})=c(v_{n+2})=1$ and $w_R(c)\not\equiv 2^n \pmod 3$.}

\smallskip

Let
\begin{displaymath}
c_W=c\restriction W'',
\qquad
s_W=\push{c_W}.
\end{displaymath}
By the Pushforward Lemma \ref{lm:Pushforward},
\begin{displaymath}
s_W\le 2^n-2\cdot |\supp{c_W}|
\qquad\text{and}\qquad
\Gmapsto{s_W\cdot e_{v_1}\oplus c_W}{e_{v_{n+2}}}{G[W]}.
\end{displaymath}
Thus, using only edges of $G[W]$, we can clear all pebbles from $W''$ and send one additional pebble to $v_{n+2}$. Since originally $c(v_{n+2})=1$, the vertex $v_{n+2}$ now contains two pebbles, so we may perform the pebble move
\begin{displaymath}
\pmove{v_{n+2}}{v_{n+1}}.
\end{displaymath}

At that point $v_{n+1}$ contains 2 pebbles. Now let $j\in\{1,\dots,n\}$ be minimal such that $c(v_i)>0$ for every $i$ with $j<i\le n$. Then we can perform the pebble moves
\begin{displaymath}
\pmove{v_{i+1}}{v_i}
\qquad\text{for } i=n,n-1,\dots,j.
\end{displaymath}
We obtain a configuration $c^*$ such that
\begin{displaymath}
c^*(v_i)=
\begin{cases}
c(v_i), & \text{if } 2\le i<j,\\
1, & \text{if  $i=j$ and $j>1$},\\
0, & \text{if } j<i\le 2n+1.
\end{cases}
\end{displaymath}

Next we pull back the pebbles on $v_2,\dots,v_j$ to $v_1$. By the Pullback Lemma, this requires at most
\begin{displaymath}
p_U=\pull{c^*\restriction U''}
\le 2^n-|\supp{c^*\restriction U''}|-2
\end{displaymath}
pebbles from $v_1$.
Hence we obtain a configuration $c'$ that is stacked at $v_1$.

Since $s_W+p_U\le 2^n\le c(v_1)$, we can carry out these pebbling moves.

All pebbling moves above preserve the $R$-weight modulo $3$, except for the move $\pmove{v_{n+2}}{v_{n+1}}$. That move removes two pebbles of weight $2^n$ and adds one pebble of the same weight, so it decreases the $R$-weight by $2^n$ modulo $3$. Therefore
\begin{displaymath}
w_R(c')\equiv w_R(c)-2^n \pmod 3.
\end{displaymath}
Since $w_R(c)\not\equiv 2^n \pmod 3$, it follows that
\begin{displaymath}
c'(v_1)=w_R(c')\not\equiv 0 \pmod 3.
\end{displaymath}
Hence the stacked configuration $c'$ is clearable.

\medskip

\noindent{\bf Case 4.} $c(v_{n+1})=c(v_{n+2})=1$ and $w_R(c)\equiv 2^{n}\pmod 3$.

\smallskip

We use the Pushforward Lemma \ref{lm:Pushforward} three times. For $H\in\{U,W\}$ let
\begin{equation*}
c_H=c\restriction H''\text{ and } s_H=\push{c_H}.
\end{equation*}
Then 
\begin{equation}\label{eq:susw2}
s_H\le 2^n-2\cdot \supp{c_H}\quad \text{ and }\quad 
\Gmapsto{(s_H\cdot e_1\oplus c_U)}{e_{v_H}}{G[H]}
\end{equation}
As in Case~2, if we set
\begin{equation*}
\ell=c(v_1)-(s_U+s_W),
\end{equation*}
then
\begin{equation}\label{eq:thirdx}
\ell\ge 2^n.
\end{equation}
Now let
\begin{equation*}
\ell^*=\ell-2^n\ge 0.
\end{equation*}

Since $c=(c_U+c_W+2^n+\ell)\cdot e_{v_1}\oplus c_U\oplus   c_W
\oplus e_{v_{n+1}} \oplus e_{v_{n+2}} $, using \eqref{eq:susw2}
we have \begin{equation}\label{eq:y}
2^n\equiv w_R(c)\equiv 3\cdot 2^n+ 2^n+\ell^*,  
\mod 3
\end{equation}
and so 
\begin{equation*}
\ell^*\equiv 0\pmod 3.
\end{equation*}

We first remove the $\ell^*$ surplus pebbles from $v_1$ in blocks of three. After that we obtain a configuration $c^*$ such that
\begin{displaymath}
c^*=(s_U\cdot e_{v_1}\oplus c_U)\oplus
(s_W\cdot e_{v_1}\oplus c_W)\oplus
2^n\cdot e_{v_1}\oplus e_{v_{n+1}}\oplus e_{v_{n+2}}.
\end{displaymath}

Now apply the Pushforward Lemma to the first two summands, and then use the remaining $2^n$ pebbles at $v_1$ to push one more pebble to $v_{n+2}$. This yields
\begin{displaymath}
\Gmapsto{c^*}{3\cdot e_{v_{n+2}}\oplus  e_{v_{n+2}}}{G}.
\end{displaymath}
The latter configuration is clearable: first apply $\pmove{v_{n+2}}{v_{n+1}}$, then $\pmove{v_{n+1}}{v_{n+2}}$, and finally $\pmove{v_{n+2}}{v_{n+1}}$. Hence $c$ is clearable.
\end{proof}

\begin{theorem}\label{tm:ash-even-cycle-equality}
Assume ASH for even cycles. Then, for every $n\ge 2$,
\begin{displaymath}
\stack{C_{2n}}=2^{n+1}-1.
\end{displaymath}
\end{theorem}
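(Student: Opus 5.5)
The equality splits into two inequalities, each essentially already available, so the plan is to assemble them.

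For the lower bound, I would invoke Corollary~\ref{cr:stack_c_2n} directly. It gives $\stack{C_{2n}}\ge 2^{n+1}-1$ without any hypothesis. Recall where it comes from: the folding homomorphism $C_{2n}\twoheadrightarrow P_{n+1}$, together with the Homomorphism Lemma~\ref{lm:homomorphism}(2) and Theorem~\ref{tm:pn}. Concretely, the witnessing non-stackable configuration on $C_{2n}$ of size $2^{n+1}-2$ is $(2^{n+1}-3)\cdot e_{v_1}\oplus e_{v_{n+1}}$, which is itself almost stacked. This is consistent with ASH, though we do not need that fact.

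For the upper bound, I would apply ASH(1) to $G=C_{2n}$, which is connected. ASH(1) says $\stack{C_{2n}}$ equals the least $t\ge 2$ such that every almost stacked configuration in $\conf{C_{2n}}{t}$ is stackable. Theorem~\ref{tm:ash-even-cycle-upper} shows that every almost stacked configuration of size at least $2^{n+1}-1$ on $C_{2n}$ is stackable. Taking $t=2^{n+1}-1$ (which is $\ge 2$ since $n\ge2$), the least such $t$ is at most $2^{n+1}-1$, so $\stack{C_{2n}}\le 2^{n+1}-1$.

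One subtlety must be handled. The paper's definition of $\stack{G}$ as a "least $t$" implicitly presumes monotonicity, namely that every configuration of every size $\ge t$ is stackable. I would read ASH as equating the two thresholds as stated: the least $t$ for which all configurations of size $t$ are stackable equals the least $t$ for which all almost stacked configurations of size $t$ are stackable. Under that reading, the bound from Theorem~\ref{tm:ash-even-cycle-upper} at size exactly $2^{n+1}-1$ suffices.

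There is no real obstacle here beyond checking that the hypotheses match. The one point worth verifying is that Theorem~\ref{tm:ash-even-cycle-upper}'s proof covers every almost stacked configuration. After rotation the stack may sit at any vertex, and the case split on $c(v_{n+1})\in\{0,1\}$ is exhaustive. The same argument also works if the stacked vertex carries at most one pebble, because then the configuration is almost stacked at any vertex.
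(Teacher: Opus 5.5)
Your proposal is correct and follows the paper's own argument: the lower bound is Corollary~\ref{cr:stack_c_2n}, whose witness $(2^{n+1}-3)\cdot e_{v_1}\oplus e_{v_{n+1}}$ you identify correctly, and the upper bound is ASH(1) combined with Theorem~\ref{tm:ash-even-cycle-upper}. The monotonicity you worry about holds outright, so no special reading is needed: if every configuration of size $t\ge2$ on a connected graph is stackable, then a configuration of size $t+1$ has a vertex with at least two pebbles (otherwise $t$ of its singly occupied vertices would form a non-stackable configuration of size $t$), and one pebbling step from that vertex reduces it to a stackable configuration of size $t$.
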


\begin{proof}
By Theorem~\ref{tm:ash-even-cycle-upper}, ASH implies
$\stack{C_{2n}}\le 2^{n+1}-1$. Corollary~\ref{cr:stack_c_2n} gives the
reverse inequality. Hence, $\stack{C_{2n}}=2^{n+1}-1$.
\end{proof}

\begin{theorem}\label{tm:ash-odd-cycle-equality}
Assume ASH for odd cycles. Then, for every $n\ge 1$,
\begin{displaymath}
\clear{C_{2n+1}}=3\cdot 2^n-2.
\end{displaymath}
\end{theorem}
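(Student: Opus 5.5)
The argument mirrors the proof of Theorem~\ref{tm:ash-even-cycle-equality}: we combine an upper bound, which holds under ASH, with the unconditional lower bound.

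\emph{Upper bound.} Assume ASH for odd cycles; $C_{2n+1}$ is connected and non-bipartite, so part~(2) of Hypothesis~\ref{HY_almoststacked} applies. By that part, $\clear{C_{2n+1}}$ is the least $t\ge2$ such that every almost stacked configuration in $\conf{C_{2n+1}}{t}$ is clearable. Theorem~\ref{tm:clearoddcircuit} shows that every almost stacked configuration of size at least $3\cdot 2^n-2$ on $C_{2n+1}$ is clearable. Since $3\cdot2^n-2\ge 4\ge 2$ for $n\ge1$, the value $t=3\cdot 2^n-2$ is admissible. Hence $\clear{C_{2n+1}}\le 3\cdot 2^n-2$.

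One point needs care here. ASH defines $\clear{G}$ via a single size $t$ (``every almost stacked configuration in $\conf{G}{t}$''), and the same is true of the original definition. So a monotonicity issue could arise in principle. It does not arise for us, because Theorem~\ref{tm:clearoddcircuit} covers all sizes $\ge 3\cdot2^n-2$, and in any case we only need the single value $t=3\cdot2^n-2$.

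\emph{Lower bound.} Corollary~\ref{cr:c_2n+1} gives $\clear{C_{2n+1}}\ge 3\cdot 2^n-2$ unconditionally. That corollary was obtained from Theorem~\ref{tm:pebblingodd} through Corollary~\ref{tm:pebblingoddgirth}, using $\og(C_{2n+1})=2n+1$. For $n=1$ both bounds agree with Theorem~\ref{tm:complete-graph}, which gives $\clear{K_3}=4$. Combining the two inequalities yields the equality.

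This step is purely bookkeeping. The main obstacle lies in the ingredients rather than in the combination itself. The substantive work is the case analysis of Theorem~\ref{tm:clearoddcircuit}, which I would double-check before relying on it, particularly the $w_R$-mod-$3$ bookkeeping in Cases~2 and~4 and the pebble budgets from the Pushforward and Pullback Lemmas. The remaining care is to note that the introduction's stated value $3\cdot2^n-1$ is a typo, and that the correct target is $3\cdot 2^n-2$, consistent with the abstract.
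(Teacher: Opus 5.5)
Your argument is correct and coincides with the paper's proof: the upper bound comes from part~(2) of ASH combined with Theorem~\ref{tm:clearoddcircuit}, and the lower bound is Corollary~\ref{cr:c_2n+1}. Your caution about Case~4 of Theorem~\ref{tm:clearoddcircuit} is justified: there one actually gets $\ell=w_R(c)-2^{n+2}\equiv 0\pmod 3$ (not $\ell-2^n\equiv 0$), and $\ell\ge 2^n$ can fail (for $n=2$ and $c=8e_{v_1}\oplus e_{v_3}\oplus e_{v_4}$ one has $\ell=0$, although this $c$ is still clearable), but that is a defect in the ingredient, shared equally by the paper's proof, not in the combination step you describe.
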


\begin{proof}
By Theorem~\ref{tm:clearoddcircuit}, ASH implies
$\clear{C_{2n+1}}\le 3\cdot 2^n-2$. Corollary~\ref{cr:c_2n+1} gives the
reverse inequality. Hence, $\clear{C_{2n+1}}=3\cdot 2^n-2$.
\end{proof}

\section{Trees}

Let $T$ be a connected tree and let $r\in V(T)$.
Define $\operatorname{leaf}(r)$ to be the number of leaves of $T$ different from $r$.
Further, define
\begin{equation*}
\sigma_T(r)=\sum\{\deg(v)\cdot 2^{d(r,v)}: v\in V(T),\ v=r \text{ or } \deg(v)>1\}+1.
\end{equation*}
Finally, let
\begin{equation*}
\operatorname{estim}(T)=\max\{\sigma_T(r)+\operatorname{leaf}(r): r\in V(T)\}.
\end{equation*}

If $c$ is a configuration on $T$ and $r\in V(T)$, let $T(c,r)$ denote
the minimal subtree of $T$ containing $r$ and $\supp{c}$.

\begin{theorem}
Let $T$ be a tree, let $r\in V(T)$, and let $c$ be a configuration on
$T$ that is almost stacked at $r$. If
\begin{equation*}
c(r)\ge \sigma_{T(c,r)}(r),
\end{equation*}
then $c$ is stackable at $r$.
\end{theorem}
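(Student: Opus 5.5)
We may work inside $T'=T(c,r)$. Every vertex outside $T'$ carries no pebble, and moves inside the subtree $T'$ are valid in $T$. So assume $T=T'$. Then every leaf of $T$ other than $r$ lies in $\supp{c}$ and carries exactly one pebble, and every other vertex $v\neq r$ carries $c(v)\le 1$.

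We use a single resource: the reservoir at $r$. Sending $2^{d(r,v)}$ pebbles from $r$ along the unique $r$--$v$ path places one extra pebble on $v$. The plan is to show that the terms of $\sigma_T(r)$ pay for all such deliveries, with one pebble left over for the final stack:
\begin{itemize}
\item the term $\deg(v)\cdot 2^{d(r,v)}$ is the budget for each internal vertex $v\ne r$;
\item the term $\deg(r)$ is the budget at $r$ itself;
\item the final $+1$ is the pebble that remains at $r$.
\end{itemize}

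\textbf{Branch claim, by induction on height.} For a vertex $v\neq r$ let $T_v$ be the subtree hanging below $v$, and let $p$ be the parent of $v$. We prove:

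\emph{Using at most $\sum\{\deg(x)2^{d(r,x)}: x\in V(T_v),\ \deg(x)>1\}$ pebbles from $r$, with all moves inside $T_v\cup\{\text{path } r\to v\}$, we can empty $T_v$ completely and place at least one new pebble on $p$.}

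For a leaf $v$ the sum is empty. The leaf has $c(v)=1$, so we should instead require that the parent receive the leaf's pebble. The parent handles this using its own budget, as follows.

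Take an internal vertex $v$ with $j=\deg(v)-1$ children. Each child is treated in one of two ways.
\begin{itemize}
\item A child that is itself internal is handled by induction; it delivers at least one pebble to $v$.
\item A child $w$ that is a leaf is handled with the budget of $v$. Send $2^{d(r,w)}=2\cdot 2^{d(r,v)}$ pebbles from $r$ to $w$, so that $w$ has $2$ pebbles, and apply $\pmove{w}{v}$. This is affordable because $v$ has $\deg(v)2^{d(r,v)}$ at its disposal, and there are fewer than $\deg(v)$ leaf children.
\end{itemize}

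Afterwards $v$ carries $a\ge j+c(v)$ pebbles and its whole subtree is empty. Apply $\pmove{v}{p}$ exactly $\lfloor a/2\rfloor$ times. If one pebble remains, buy one more pebble at $v$ for $2^{d(r,v)}$ and move once more. The total spending charged to $v$ is at most $(j\cdot 2+1)2^{d(r,v)}$. This exceeds $\deg(v)2^{d(r,v)}$ when many children are leaves, so the accounting has to be refined.

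\textbf{Refined accounting.} A leaf child $w$ need not be pebbled up at cost $2\cdot 2^{d(r,v)}$. Instead, send one pebble from $r$ to $v$ (cost $2^{d(r,v)}$) and use $\pmove{v}{w}$, $\pmove{w}{v}$ to absorb the leaf. Then each child costs at most $2^{d(r,v)}$, and the parity fix costs one more. The total is at most $(j+1)2^{d(r,v)}=\deg(v)2^{d(r,v)}$.

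The same scheme is applied at $r$ with budget $\deg(r)$: each child $u$ of $r$ that is a leaf is absorbed by $\pmove{r}{u}$, $\pmove{u}{r}$, and each internal child returns its pebbles to $r$ via the branch claim. Then $T-r$ is empty, and the reservoir still holds the reserved $+1$ pebble, together with the returned pebbles. So the resulting configuration is stacked at $r$ and nonempty.

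\textbf{Main obstacle.} The delicate step is the bookkeeping at each internal vertex $v$. We must check three things:
\begin{itemize}
\item the moves into and out of $v$ can be ordered so that $v$ always has the two pebbles needed when a move is applied;
\item the parity repair at $v$ and the absorption of its leaf children never together exceed $\deg(v)2^{d(r,v)}$;
\item the surplus pebbles passed upward never break the parity argument at the ancestors.
\end{itemize}
Surplus pebbles can only help, because an extra pebble at an ancestor can replace a purchased one. I would first try to formalize the branch claim as the statement ``at least one pebble reaches $p$,'' proved by strong induction on the height of $T_v$ and mirroring the proof of the Pullback Lemma~\ref{lm:pullback}. Summing the per-vertex budgets then gives exactly $\sigma_T(r)-1$, plus one leftover pebble at $r$.
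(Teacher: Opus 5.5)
There is a genuine gap, and it sits in the step you yourself flag as the main obstacle. The branch claim is false as stated, and the scheme built on it overspends.

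Your refined accounting charges $2^{d(r,v)}$ per leaf child. That assumes $v$ already holds a pebble when you buy one more and play $\pmove{v}{w}$, $\pmove{w}{v}$. Suppose instead that $c(v)=0$ and all children of $v$ are leaves. Then the first absorption costs $2\cdot 2^{d(r,v)}$ and leaves $v$ with one pebble. Emptying $v$ into $p$ then costs another $2^{d(r,v)}$, so $v$ spends $(\deg(v)+1)2^{d(r,v)}$.

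This overspend is forced. Take the path $r\,v\,w$ with $c(v)=0$ and $c(w)=1$. If $v$ fires $a\ge1$ times into $w$ and $b\ge1$ times into $r$, it gets back only $(a+1)/2$ pebbles from $w$. So at least $2a+2b-(a+1)/2\ge 3$ pebbles must arrive from $r$, costing $6>\deg(v)\cdot 2^{1}$.

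The pebble delivered to $p$ does not compensate in general. Let $r$ have one child $p$, let $p$ have children $v_1,\dots,v_m$, and let each $v_i$ be empty with a single leaf child $w_i$ carrying one pebble. Then $\sigma_T(r)=10m+4$. Each branch $\{v_i,w_i\}$ needs three pebbles at $v_i$. That is $12$ from $r$ when every purchase is paid at $r$, as in your scheme. Meanwhile the $m$ pebbles delivered to $p$ are worth at most $m/2$ back at $r$, so for $m\ge3$ you run out. Recycling the pebbles accumulating at $p$ might rescue this, but that is exactly the bookkeeping you have not done.

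The repair, which is what the paper does, is to change the invariant so that $v$ is never emptied. Show instead that at cost at most $\sum\{\deg(x)2^{d(r,x)}: x\in V(T_v),\ \deg(x)>1\}$ one can clear $T_v\setminus\{v\}$ and leave exactly one pebble on $v$. Then $v$ behaves like a leaf child of $p$. Its removal (buy one pebble at $p$, then $\pmove{p}{v}$, $\pmove{v}{p}$) is paid from one of the $\deg(p)$ shares of $p$'s term, at half the price of the parity purchase at $v$.

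The per-vertex count then works:
\begin{enumerate}[(i)]
\item Once each child of $v$ carries a single pebble, $v$ pays $2^{d(r,v)}$ per child, plus one extra $2^{d(r,v)}$ only if it starts empty. This is at most $\deg(v)2^{d(r,v)}$, and $v$ ends with exactly one pebble.
\item At $r$ each child costs $1$, for $\deg(r)$ in total, and the final $+1$ survives.
\end{enumerate}

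The paper runs this as an induction. Each step processes a deepest internal vertex $s\ne r$, all of whose children are leaves of $T(c,r)$. Afterwards $s$ is a leaf with one pebble and the minimal subtree has strictly shrunk. Its $\sigma$ has dropped by $\deg(s)2^{d(r,s)}$, which covers what was spent.
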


\begin{proof}
We argue by induction on $|\supp{c}|$.

If $|\supp{c}|=1$, then there is nothing to prove.

Assume now that $|\supp{c}|>1$, and that the statement holds for every
configuration $d$ that is almost stacked at $r$ and satisfies
$|\supp{d}|<|\supp{c}|$.

Write $T'=T(c,r)$. Choose $s\in \supp{c}\setminus\{r\}$ so that
$k=d(s,r)$ is maximal, and let
\begin{equation*}
n=\deg_{T'}(s).
\end{equation*}

Then $s$ is adjacent to $n-1$ leaves of $T'$.
Let $x$ be one of these leaves.

First move one pebble to $x$, using the pebbles at $r$ and along the path
from $r$ to $x$. This requires at most $2^{k+1}$ pebbles from $r$.
Then $x$ contains two pebbles. During this process we may use all pebbles on
the path from $r$ to $x$, so afterwards no pebbles remain on the path
between $r$ and $s$. Now perform the move from $x$ to $s$. Thus, $s$
contains one pebble.

If $n=2$, then we are done.

Assume that $n\ge 3$. Then $s$ is adjacent to at least one further leaf of
$T'$, distinct from $x$. Let $y$ be one of these leaves. Using the
pebbles at $r$, we can move one pebble to $s$. This requires at most
$2^k$ pebbles. Then $s$ contains two pebbles. Perform the move from $s$
to $y$. Thus, $y$ contains two pebbles. Now carry out the move from $y$
back to $s$. Then $s$ again contains one pebble, and $y$ is cleared.
Repeating this process for every other leaf of $T'$ adjacent to $s$, we
obtain a configuration $c'$.

Altogether we use at most
\begin{equation*}
2^{k+1}+(\deg(s)-2)2^k=\deg(s)\cdot 2^k
\end{equation*}
pebbles on $t$, and the resulting configuration $c'$ is supported on a smaller
subtree. Hence, $|\supp{c'}|<|\supp{c}|$, and we may apply the induction
hypothesis.
\end{proof}

The preceding theorem has the following immediate consequence.

\begin{theorem}\label{tm:tree-ash}
Assume ASH. Then every tree $T$ satisfies
\begin{equation*}
\stack{T}\le \operatorname{estim}(T).
\end{equation*}
\end{theorem}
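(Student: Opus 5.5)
Under ASH, part (1) of Hypothesis~\ref{HY_almoststacked} says that to bound $\stack{T}$ from above it is enough to check almost stacked configurations. So the plan is to fix $t=\operatorname{estim}(T)$ and an arbitrary configuration $c\in\conf{T}{t}$ that is almost stacked at some vertex $r$, and to show that $c$ is stackable. Then ASH gives $\stack{T}\le t$.

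First I dispose of the trivial case: if $\supp{c}=\{r\}$ the configuration is already stacked. Otherwise I apply the preceding theorem. It requires $c(r)\ge\sigma_{T(c,r)}(r)$, so the work is to compare $c(r)$ with that quantity. Since every vertex other than $r$ carries at most one pebble,
\begin{equation*}
c(r)=\norm{c}-|\supp{c}\setminus\{r\}|\ge \operatorname{estim}(T)-|\supp{c}\setminus\{r\}|\ge \sigma_T(r)+\operatorname{leaf}(r)-|\supp{c}\setminus\{r\}|.
\end{equation*}
It therefore suffices to prove
\begin{equation*}
\sigma_{T'}(r)+|\supp{c}\setminus\{r\}|\le \sigma_T(r)+\operatorname{leaf}(r),\qquad T'=T(c,r).
\end{equation*}

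To prove this I compare the two trees vertex by vertex. Passing from $T$ to the subtree $T'$ can only lower degrees, so every internal vertex of $T'$ contributes at most what it contributes to $\sigma_T(r)$. The root term also does not increase, since $\deg_{T'}(r)\le\deg_T(r)$. The delicate point is a vertex that is internal in $T$ but becomes a leaf of $T'$. Such a vertex drops out of the sum entirely, which only helps.

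Next I count the support vertices $v\neq r$. Each one is either a leaf of $T'$ or an internal vertex of $T'$.
\begin{enumerate}[(i)]
\item Leaves of $T'$ other than $r$: I inject them into the leaves of $T$ other than $r$ by following any branch of $T$ away from $r$ beyond the leaf until a leaf of $T$ is reached. Different leaves of $T'$ have disjoint such branches, so the map is injective and there are at most $\operatorname{leaf}(r)$ of them.
\item Internal vertices $v$ of $T'$: here the slack comes from the term $\deg(v)2^{d(r,v)}\ge 2$, which appears in both $\sigma_{T'}(r)$ and $\sigma_T(r)$. That alone gives no slack, so instead I use the $+1$ hidden in the bound $\deg(s)2^k$ used in the proof of the preceding theorem. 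Alternatively, I observe that the bound there is not tight: a pebble already sitting on an interior path vertex reduces the cost.
\end{enumerate}

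The main obstacle is exactly item (ii), where support vertices that are internal in $T'$ need slack to absorb their count. If the crude inequality fails there, I would sharpen the preceding theorem's accounting so that an already-occupied vertex on the path from $r$ to $x$ saves $2^{d(r,v)}\ge1$ pebbles, and then redo the comparison with that refined bound.
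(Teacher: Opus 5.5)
Your reduction is the one the paper intends; the paper just calls the theorem an immediate consequence of the preceding one. Your degree comparison $\sigma_{T'}(r)\le\sigma_T(r)$ and the leaf injection in (i) are fine. But the step you flag in (ii) is a genuine gap: the inequality $\sigma_{T'}(r)+|\supp{c}\setminus\{r\}|\le\sigma_T(r)+\operatorname{leaf}(r)$ that you reduce to is false.

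Take $T=P_3$, $r=v_1$ and $c=5e_{v_1}\oplus e_{v_2}\oplus e_{v_3}$. Then $\norm{c}=7=\operatorname{estim}(P_3)$ and $T(c,r)=P_3$, but $\sigma_{P_3}(v_1)=1+2\cdot2+1=6>5=c(v_1)$. So the preceding theorem as stated does not apply, even though $c$ is stackable. More generally, on $P_n$ with one pebble on each of $v_2,\dots,v_n$ and $\norm{c}=2^n-1$, you get $c(v_1)=2^n-n$ against $\sigma_{P_n}(v_1)=2^n-2$, a deficit of $n-2$. No fixed slack can absorb this. In particular your first remedy does not exist: $2^{k+1}+(\deg(s)-2)2^k=\deg(s)2^k$ exactly, and the only spare unit is the final $+1$ in $\sigma$, which is what leaves a pebble on $r$.

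What is missing is a strengthened form of the preceding theorem that credits pebbles on internal vertices of $T'=T(c,r)$. This is your second remedy, and it has to be proved, not invoked. One clean way uses $R(v)=2^{d(r,v)}$.

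\begin{itemize}
\item Handle the vertices $x\neq r$ of $T'$ in order of decreasing depth. At that moment $x$ carries exactly one pebble. Route one pebble from $r$ to $x$ by outward moves, absorbing any pebble lying on the path, and then move from $x$ to its parent.
\item Outward moves have zero $R$-loss. Each $x$ makes exactly one inward move, of loss $3\cdot2^{d(r,x)-1}$. Until the last stage $r$ only loses pebbles.
\item Hence the procedure is feasible and ends stacked at $r$ as soon as $w_R(c)-\tfrac32\sum_{x\in V(T')\setminus\{r\}}2^{d(r,x)}\ge1$.
\end{itemize}

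Let $L$ be the set of leaves of $T'$ other than $r$, and $I$ the set of pebbled internal vertices of $T'$ other than $r$. One checks that $\sigma_{T'}(r)-1=\tfrac32\sum_{x\neq r}2^{d(r,x)}-\sum_{x\in L}2^{d(r,x)}$. So the condition above reads $c(r)\ge\sigma_{T'}(r)-\sum_{v\in I}2^{d(r,v)}$.

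Each $2^{d(r,v)}$ is at least $2$, so your comparison then closes:
\begin{equation*}
c(r)=\norm{c}-|L|-|I|\ge\sigma_T(r)+\operatorname{leaf}(r)-|L|-|I|\ge\sigma_{T'}(r)-|I|.
\end{equation*}
Without such a strengthened lemma, neither your argument nor the paper's one-line justification covers almost stacked configurations that have pebbles on internal vertices of $T(c,r)$.
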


\begin{conjecture}\label{conj:tree-equality}
If $T$ is a finite connected tree, then
\begin{equation*}
\stack{T}=\operatorname{estim}(T).
\end{equation*}
\end{conjecture}

Computational results support this conjecture for trees with at most $7$
vertices; the corresponding data are available in \cite{So26}.

\section{History, Motivation and Computational Background}

Our interest in graph pebbling was sparked by Hurlbert's seminar talk
\cite{Hurlbert2023}. As a first step, we computed the cover pebbling number
and the stacking number for several small concrete graphs. These computational
materials are available in the GitHub repository and the Zenodo archive
\cite{So26}, as follows.
\begin{enumerate}[(1)]   
\item The file \texttt{special\_graph\_pebbling\_report.pdf}   contains
our first computations for small concrete graphs.
\item  \texttt{atlas\_tree\_estimation\_report.pdf} contains the computations showing
that Conjecture~\ref{conj:tree-equality} holds for trees with at most $7$
vertices, and 
\item \texttt{atlas\_ash\_report.pdf} contains the
computations showing that the Almost Stacked Hypothesis holds for graphs with
at most $7$ vertices.
\end{enumerate}

 The experimental work   reported in \texttt{special\_graph\_pebbling\_report.pdf}  then led us to the general
results proved in the present paper.

At present, we are unable to formulate a conjecture for $\stack{C_{2n+1}}$.
We have computed the values of $\stack{C_{2n+1}}$ for the odd cycles
$C_3$, $C_5$, $C_7$, $C_9$, and $C_{11}$, obtaining the sequence
\begin{displaymath}
4, 8, 17, 37, 77.
\end{displaymath}
However, these data do not yet suggest a convincing general pattern.

\medskip

The naive method to compute 
$\stack{G}$  would be to inspect, for each
$m\ge 1$, all configurations of size $m$, and declare $c$ stackable if either
$c$ is already stacked or some pebbling step from $c$ leads to a smaller
stackable configuration. This becomes slow because the number of
configurations of a fixed size grows rapidly. It is therefore better to work
with the complementary family $N_m$ of non-stackable configurations of size
$m$. If $d$ is a configuration of size $m+1$, call $c$ a \emph{child} of $d$
if $d\mapsto c$ in one pebbling step, and call $d$ a \emph{parent} of $c$.
Then a configuration $d$ of size $m+1$ is non-stackable if and only if it is not stacked and every child of
$d$ belongs to $N_m$. Hence, if $N_m$ is known, we can generate $N_{m+1}$ by testing the parents of the configurations in $N_m$. This method is still exponential in the worst case, but it is much faster than the naive method. We used this approach to compute $\stack{G}$ and $\clear{G}$ for some  graphs.

 If $G$ is a graph, $v\in V(G)$ and $c$ is a configuration with $\norm{c}=|\supp{c}|+1=|V(G)|+1$, then 
 $c$ is stackable if and only if  $G$ has a Hamiltonian path starting at $v$.  However, we do not know if it is NP-complete to decide if 
 $\stack{G}=|V(G)|+1$. We think that there is no efficient algorithm  to compute $\stack{G}$ or $\clear{G}$ for large graphs. However, it would be interesting to find efficient algorithms for computing these parameters for special classes of graphs, such as trees.

\end{document}